\NewDocumentEnvironment{eqs}{+b}
    {\begin{equation}\begin{split}#1\end{split}\end{equation}}
    {}
\numberwithin{equation}{section}
\newtheorem{theorem}{Theorem}[section]
\newtheorem{lemma}[theorem]{Lemma}
\newtheorem{proposition}[theorem]{Proposition}
\newtheorem{remark}[theorem]{Remark}
\newtheorem{defi}[theorem]{Definition}
\newcounter{thmc}
\newtheorem{thmcite}[thmc]{Theorem}
\newtheorem{lemcite}[thmc]{Lemma}
\theoremstyle{definition}
\let\oldtocsection=\tocsection
\let\oldtocsubsection=\tocsubsection
\let\oldtocsubsubsection=\tocsubsubsection
\renewcommand{\tocsection}[2]{\hspace{0em}\oldtocsection{#1}{#2}}
\renewcommand{\tocsubsection}[2]{\hspace{1em}\oldtocsubsection{#1}{#2}}
\renewcommand{\tocsubsubsection}[2]{\hspace{2em}\oldtocsubsubsection{#1}{#2}}
\renewcommand{\tilde}{\widetilde}          
\DeclareMathSymbol{\leqslant}{\mathalpha}{AMSa}{"36} 
\DeclareMathSymbol{\geqslant}{\mathalpha}{AMSa}{"3E} 
\DeclareMathSymbol{\eset}{\mathalpha}{AMSb}{"3F}     
\renewcommand{\leq}{\;\leqslant\;}                   
\renewcommand{\geq}{\;\geqslant\;}                   
\newcommand{\indeps}{\mathds 1_{x\in\Heps}}
\newcommand{\C}{\mathbb{C}}
\newcommand{\R}{\mathbb{R}}
\newcommand{\D}{\mathbb{D}} 
\newcommand{\Heps}{\mathbb{H}_{\delta,\eps}}
\newcommand{\Reps}{\mathbb{R}_{\eps}} 
\renewcommand{\H}{\mathbb{H}}
\newcommand{\E}{\mathds{E}}
\newcommand{\X}{\bm{\mathrm X}}
\newcommand{\V}{\bm{\mathrm V}}
\newcommand{\ps}[1]{\langle #1 \rangle}
\newcommand{\mc}[1]{\mathcal{#1}}
\def\sl{\mathfrak{sl}}
\newcommand{\ostar}{\mathbin{\mathpalette\make@circled\star}}
\newcommand{\make@circled}[2]{%
  \ooalign{$\m@th#1\smallbigcirc{#1}$\cr\hidewidth$\m@th#1#2$\hidewidth\cr}%
}
\newcommand{\smallbigcirc}[1]{%
  \vcenter{\hbox{\scalebox{0.77778}{$\m@th#1\bigcirc$}}}%
}
\def\Is{I_{sing} }
\def\Ir{I_{reg} }
\def\It{I_{tot} }
\newcommand{\qt}[1]{\quad\text{#1}\quad}
\def\weyl{\bm{\mathrm{\rho}}}
\def\V{\bm{\mathrm V}}
\def\Db{\bm{\mathrm D}}
\def\Wb{\bm{\mathrm W}}
\def\SET{\bm{\mathrm T}}
\def\Wc{\bm{\mathcal W}}
\def\Lc{\bm{\mathcal L}}
\def\X{\bm{\mathrm  X}}
\def\L{\bm{\mathrm L}}
\def\eps{\varepsilon}
\def\g{\mathfrak{g}}
\def\a{\mathfrak{a}}
\def\eps{\varepsilon}
\def\bi{\begin{itemize}}
	\def\ei{\end{itemize}}
\def\bnum{\begin{enumerate}}
	\def\enum{\end{enumerate}}
\def\<#1{\langle #1 \rangle}
\newcommand{\norm}[1]{\left\lvert#1\right\rvert}
\newcommand{\expect}[1]{\mathbb{E}\left[#1\right]}
\title[Higher-spin symmetry in the $\sl_3$ boundary Toda CFT II]{Higher-spin symmetry in the $\sl_3$ boundary Toda conformal field theory II: Singular vectors and BPZ equations}
\author{Baptiste Cercl\'e}
\email{baptiste.cercle@epfl.ch}
\address{EPFL SB MATH RGM, MA B2 397, Station 8, CH-1015 Lausanne, Switzerland.}
\author{Nathan Huguenin}
\email{nathan.huguenin@univ-amu.fr}
\address{Aix-Marseille Université, CNRS, Institut de Mathématiques de Marseille (I2M) – UMR 7373, Site de Saint Charles, 3 place Victor Hugo, Case 19, 13331 Marseille cédex 3, France.}
\begin{document}

	\maketitle
	\begin{abstract}
		This article is the second chapter of a two-part series dedicated to the mathematical study of the higher-spin symmetry enjoyed by the $\sl_3$ boundary Toda Conformal Field Theory. Namely, based on a probabilistic definition of this model and building on the framework introduced in the first article of this series, we compute some singular vectors of the theory which at the level of correlation functions give rise to higher equations of motion. Under additional assumptions these become BPZ-type differential equations satisfied by the correlation functions, a key feature in the perspective of a rigorous derivation of the structure constants of the theory. Such equations of motion and differential equations were previously unknown in the physics literature. 
	\end{abstract}

    \renewcommand{\baselinestretch}{0.75}\normalsize
    \tableofcontents
    \renewcommand{\baselinestretch}{1.0}\normalsize

    \section{Introduction}

\subsection{Boundary Toda CFT and its symmetries} 

\subsubsection{Liouville and Toda conformal field theories} 
In the context of two-dimensional Conformal Field Theory (CFT hereafter), Liouville theory~\cite{Pol81} is a fundamental instance of a model enjoying Virasoro symmetry and for which the conformal bootstrap procedure ---as introduced in the pioneering work of Belavin-Polyakov-Zamolodchikov (BPZ)~\cite{BPZ}--- can be implemented. This systematic method is based on the study of the symmetries of the model in the form of representation theory of the Virasoro algebra, which in turn put strong constraints on the correlation functions of a CFT and provides a general method for solving a theory that enjoys conformal symmetry. The first stage of this program, which is the computation of basic correlation functions of the theory named \emph{structure constants}, has been carried out for Liouville theory in~\cite{DO94, ZZ96} for the case of the sphere, and in~\cite{FZZ, PT02, Hos} for the upper half-plane. One fundamental ingredient in the derivation of these structure constants is the existence of BPZ-type differential equations, coming from representation theory of the Virasoro algebra in that they capture several features related to the conformal invariance of the model. We refer to~\cite{Teschner_revisited} for more about the implementation of the conformal bootstrap method in Liouville CFT from the physics perspective. 

On the mathematical side, since the last decade the program initiated by David-Kupiainen-Rhodes-Vargas~\cite{DKRV} has proven to be very powerful in the prospect of developing a rigorous probabilistic approach to Liouville CFT. In~\cite{KRV_DOZZ} the DOZZ formula for the structure constants of the sphere has been rigorously obtained, while the recursive method~\cite{GKRV} at the heart conformal bootstrap procedure and more generally Segal's axioms~\cite{Seg04} were proven to be valid for the probabilistic definition of Liouville CFT. On a similar perspective and based on the existence of BPZ-type differential equations, Remy~\cite{remy1} and Remy-Zhu~\cite{remy2} derived the structure constants for boundary Liouville CFT in the particular case where only the bulk potential in considered, while when both the bulk and the boundary potential are taken into account, the derivation of all structure constants of the theory has been carried out in~\cite{ARS, ARSZ}, by combining CFT-inspired techniques with methods coming from the \emph{mating-of-trees} machinery introduced earlier in~\cite{DMS14}.

Beyond conformal symmetry, it is well-known that many models of statistical physics (such as the three-states Potts model) enjoy an enhanced level of symmetry called $W$-symmetry or \emph{higher-spin symmetry}. Toda theories arise in this context as natural generalizations of Liouville CFT that exhibit this particular feature. Indeed, these models depend on a simple and complex Lie algebra $\mathfrak{g}$ (Liouville theory corresponding to the simplest case of $\g=\sl_2$) and their symmetry algebras are the $W$-algebras $\mc W^{k}(\mathfrak{g})$, introduced by Zamolodchikov~\cite{Za85}, which are Vertex Operator Algebras that strictly contain the Virasoro algebra as a subalgebra. In that sense Toda theories enjoy an enhanced level of symmetry compared to Liouville theory, which make their study all the more interesting but also more complicated and many steps of the conformal bootstrap procedure are still to be understood, even from a physics perspective. This makes the mathematical study of these models all the more important in that the probabilistic framework designed for it should be helpful in the prospect of bridging this gap in the existing literature. 

To be able to provide a probabilistic definition for such theories we rely on their path integral formulation, that reads for the case of $\g=\sl_3$ considered in this document
\begin{equation}\label{eq:path_integral}
    \ps{F[\Phi]} \coloneqq \frac{1}{\mathcal{Z}} \int_{\mathcal{F}} F[\phi] e^{-S_T(\phi)} D\phi
\end{equation}
where $\mathcal{F}$ is a space of maps defined on $\Sigma$ and taking values in $\mathfrak{a}\simeq \R^2$, that is the real part of the Cartan sub-algebra of $\mathfrak{sl}_3$, and where the action $S_T$ is given by
\begin{equation}\label{eq:action}
    \begin{split}
    S_T(\phi) :=& \frac{1}{4\pi} \int_\Sigma \left( |d_g\phi|^2+R_g\ps{Q,\phi} + 4\pi\sum_{i=1}^2\mu_{B,i} e^{\ps{\gamma e_i,\phi}}\right)dv_g \\
    &+ \frac{1}{2\pi} \int_{\partial\Sigma} \left( k_g\ps{Q,\phi} + 2\pi \sum_{i=1}^2\mu_i e^{\ps{\frac{\gamma}{2}e_i,\phi}}\right)d\lambda_g.
    \end{split}
\end{equation}
This action depends on several quantities: some of them are related to the geometry of the Riemannian surface under consideration, others are coming from the underlying Lie algebra $\mathfrak{g}$, and finally some are parameters of the model. We briefly describe these quantities here:
\begin{itemize}
    \item By $g$ we denote a Riemannian metric on the surface $\Sigma$. Then $d_g$ is the gradient, $R_g$ is the scalar Ricci curvature and $k_g$ is the geodesic curvature associated with $g$;
    \item The simple roots $(e_i)_{i=1,2}$ form a basis of $\mathfrak{a}$, and the scalar product $\ps{\cdot,\cdot}$ on $\mathfrak{a}$ is defined from the Killing form;
    \item The parameters are: the \emph{coupling constant} $\gamma\in(0,\sqrt{2})$~\footnote{Due to the normalization of the simple roots $|e_i|^2=2$, the range of values for $\gamma$ differs from the one in Liouville theory by a factor $\sqrt{2}$. The so-called $L^2$-phase in the $\mathfrak{sl}_3$ Toda theory is thus the interval $(0,1)$.}, the \emph{background charge} $Q := \left(\gamma+\frac{2}{\gamma}\right)\boldsymbol{\rho}$, where $\boldsymbol{\rho}\in\a$ is the Weyl vector, and the cosmological constants $\mu_{B,i}$ and $\mu_i$, $i=1,2$, that are \emph{a priori} taken to be positive. 
\end{itemize}

For general $\g$, Toda theories are way less understood than their $\sl_2$ counterpart Liouville CFT, and in particular it is not known in which sense the bootstrap method should apply to these models. Still in the $\sl_3$ case and based on the study of the $W_3$ algebra, a formula for the sphere structure constant has been proposed by Fateev-Litvinov~\cite{FaLi1}, while in the boundary case formulas for the structure constants have been proposed in specific cases only, such as a one-point function for certain boundary conditions in~\cite{FaRi} while in~\cite{Fre11} other types of boundary conditions and links with $W_n$ minimal models are discussed. From a mathematical perspective the probabilistic construction of Liouville theory~\cite{DKRV} has been generalized to Toda theories on the sphere in \cite{Toda_construction}, based on which the symmetries of the model~\cite{Toda_OPEWV} were exploited by the first author in~\cite{Toda_correl1, Toda_correl2} to prove the validity of the Fateev-Litvinov formula~\cite{FaLi1} for a family of three-point functions. The construction of the model have been extended to all Riemann surfaces with or without boundaries by the authors in \cite{CH_construction}, paving the way for a rigorous study of the symmetries enjoyed by the boundary theory. In this perspective one achievement of the present paper is the derivation of BPZ-type differential equations for a family of correlation functions that were previously unknown. In this respect we prove that the model considered does indeed enjoy higher-spin symmetry, which is new compared to the physics literature, and should lead to the computation of structure constants in the theory for which there is no predicted expression.

\subsubsection{Ward identities for the $\sl_3$ boundary Toda CFT}
Indeed, a fundamental step in the derivation of the structure constants of the theory is the rigorous understanding of the symmetries enjoyed by our probabilistic model. To this end we are naturally led to the study of the algebra of symmetry of the $\sl_3$ Toda CFT: the \textit{$W_3$ algebra}. Introduced by Zamolodchikov in a groundbreaking work~\cite{Za85}, the $W_3$ algebra provides an extension of the Virasoro algebra designed to take into account this enhanced level of symmetry. This object no longer being a Lie algebra, the mathematical formalism used to make sense of this notion then becomes that of \textit{Vertex Operator Algebras} (VOAs) and more specifically \textit{$W$-algebras}. 

In the VOA formalism, the conformal symmetry is often encoded using the so-called \textit{stress-energy tensor $\SET$}, which can be seen as a Laurent series whose modes satisfy the commutation relations of the Virasoro algebra. A key feature of the $W_3$ algebra is the existence of an additional holomorphic current, the \textit{higher-spin current $\Wb$}, which encodes the higher-spin symmetry enjoyed by the model. Like for conformal symmetry it can be represented as a Laurent series in that if $(\L_n,\Wb_m)_{n,m\in\mathbb Z}$ satisfy the commutation relations of the $W_3$ algebra, then we can write the following formal expansions:
\begin{equation*}
\SET(z)=\sum_{n\in\mathbb Z}z^{-n-2}\L_{n}\qt{and}\Wb(z) =\sum_{n\in\mathbb{Z}}z^{-n-3}\Wb_n.
\end{equation*}

Though this formalism may seem untractable for our probabilistic construction of the $\sl_3$ Toda boundary CFT, we can actually make sense of these holomorphic currents as functionals of the Toda field. The insertion of such currents, once appropriately defined,  within correlation functions indeed gives rise to \textit{Ward identities}. The existence of such Ward identities is a strong manifestation of the higher-spin symmetry enjoyed by the model.
In the first article~\cite{CH_sym1} of this two-part series we proved that the probabilistic correlation functions for the $\sl$ boundary Toda CFT were solutions of Ward identities, a statement which allows to translate results coming from representation theory of the $W_3$ algebra into actual constraints on the correlation functions. This statement takes the following form:
\begin{thmcite}\label{thm:ward_intro}
    Let $t\in \R$. For any $n\ge 1$ the local Ward identities hold true
    \begin{equation*}
        \begin{split}
            &\ps{\L_{-n}V_\beta(t)\prod_{k=1}^NV_{\alpha_k}(z_k)\prod_{l=1}^MV_{\beta_l}(s_l)}\\
            &=\left(\sum_{k=1}^{2N+M}\frac{-\Lc_{-1}^{(k)}}{(z_k-t)^{n-1}}+\frac{(n-1)\Delta_{\alpha_k}}{(z_k-t)^n}\right) \ps{V_\beta(t)\prod_{k=1}^NV_{\alpha_k}(z_k)\prod_{l=1}^MV_{\beta_l}(s_l)}\\
        \end{split}
    \end{equation*}
    for the stress-energy tensor, while for the higher-spin current:
    \begin{align*}
        &\ps{\Wb_{-n}V_\beta(t)\prod_{k=1}^NV_{\alpha_k}(z_k)\prod_{l=1}^MV_{\beta_l}(s_l)}= \\
              &\left(\sum_{k=1}^{2N+M}\frac{-\Wc_{-2}^{(k)}}{(z_k-t)^{n-2}}+\frac{(n-2)\Wc_{-1}^{(k)}}{(z_k-t)^{n-1}}-\frac{(n-1)(n-2)w(\alpha_k)}{2(z_k-t)^n}\right) \ps{V_\beta(t)\V}.
    \end{align*}
    Moreover the following global Ward identities are valid, for $0\leq n\leq 2$ and $0\leq m\leq 4$:
    \begin{equation*}
        \begin{split}
            &\left(\sum_{k=1}^{2N+M}z_k^n\Lc_{-1}^{(k)}+nz_k^{n-1}\Delta_{\alpha_k}\right)\ps{\prod_{k=1}^NV_{\alpha_k}(z_k)\prod_{l=1}^MV_{\beta_l}(s_l)}=0\\
            &\left(\sum_{k=1}^{2N+M}z_k^m\Wc_{-2}^{(k)}+mz_k^{m-1}\Wc_{-1}^{(k)}+\frac{m(m-1)}2 z_k^{m-2}w(\alpha_k)\right)\ps{\prod_{k=1}^NV_{\alpha_k}(z_k)\prod_{l=1}^MV_{\beta_l}(s_l)}=0.
        \end{split}
    \end{equation*}
\end{thmcite}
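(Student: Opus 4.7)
The plan is to give a probabilistic meaning to the holomorphic currents $\SET(z)$ and $\Wb(z)$ as regularized functionals of the Toda field, translate their insertion in a correlation function into a meromorphic operator identity via Gaussian integration by parts, and finally extract the modes $\L_{-n}$ and $\Wb_{-n}$ by contour integration around~$t$.

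Concretely, one defines $\SET(z)$ as a suitably normal-ordered version of
\[
-\tfrac12 \ps{\partial\phi(z),\partial\phi(z)}+\ps{Q,\partial^2\phi(z)},
\]
and takes $\Wb(z)$ to be the analogous cubic-in-$\partial\phi$ expression built from the symmetric invariant tensor of $\sl_3$. On the upper half-plane the ``holomorphic'' derivative $\partial\phi$ is understood via the reflected GFF, so that a bulk insertion at $z_k$ is automatically accompanied by its mirror image at $\bar z_k$; this is the source of the $2N+M$ range in the sums. Gaussian integration by parts applied to each Wick contraction between $\partial\phi(z)$ and a vertex operator $V_\alpha(w)$ produces a pole of the form $\ps{\alpha,\cdot}/(z-w)$, and collecting all contractions reconstitutes the standard OPEs: $\SET(z)V_\alpha(w)$ develops poles of order at most two with residues $\Delta_\alpha V_\alpha$ and $\partial_w V_\alpha$, while $\Wb(z)V_\alpha(w)$ develops poles of order at most three with residues proportional to $w(\alpha)V_\alpha$, $\Wc_{-1}V_\alpha$ and $\Wc_{-2}V_\alpha$. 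Defining the modes by
\[
\L_{-n}V_\beta(t) = \oint_t \frac{dz}{2\pi i}(z-t)^{-n+1}\SET(z)V_\beta(t),\qquad \Wb_{-n}V_\beta(t) = \oint_t \frac{dz}{2\pi i}(z-t)^{-n+2}\Wb(z)V_\beta(t),
\]
and deforming the small contour around $t$ to one enclosing the remaining insertions together with their mirror images, the residue theorem then produces exactly the local identities stated.

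The global identities arise from the same residue-calculus argument, but now by deforming the contour to infinity. Since $\SET(z)\,dz^2$ and $\Wb(z)\,dz^3$ are quasi-primaries of weights $2$ and $3$ respectively, correlators with only primary insertions decay at infinity as $\ps{\SET(z)\prod V}=O(z^{-4})$ and $\ps{\Wb(z)\prod V}=O(z^{-6})$, so the contour integrals $\oint z^n\SET(z)\frac{dz}{2\pi i}$ and $\oint z^m\Wb(z)\frac{dz}{2\pi i}$ taken on a large circle vanish precisely for $0\le n\le 2$ and $0\le m\le 4$; equating these vanishing integrals to the sum of finite residues yields the stated global relations. The main obstacle is the rigorous handling of the cubic normal-ordering defining $\Wb(z)$: unlike the quadratic $\SET(z)$, the cubic current generates many coincidence/contact terms when Gaussian integration by parts is carried out, and one must verify that, after the boundary reflection, these terms assemble into precisely the coefficients $w(\alpha_k)$, $\Wc_{-1}^{(k)}$ and $\Wc_{-2}^{(k)}$ predicted by the representation theory of the $W_3$ algebra. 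Establishing this OPE rigorously, together with the explicit identification of $\Wc_{-1}^{(k)}$ and $\Wc_{-2}^{(k)}$ as differential operators acting on the correlator, is the main technical step; once this contact-term computation is under control, the contour-deformation arguments above are essentially formal.
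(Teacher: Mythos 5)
You should note at the outset that Theorem~\ref{thm:ward_intro} is not proved in this paper: it is imported from the companion article~\cite{CH_sym1}, and the present document only recalls the construction in Section~2. So the comparison is against the method summarized there. Your outline is the standard CFT heuristic — define $\SET$ and $\Wb$ as normal-ordered quadratic/cubic functionals, derive their OPE with a primary, extract modes by contour integrals around $t$, and deform the contour either to the other insertions (local identities) or to infinity (global identities). The leading-order structure you describe, including the decay rates $O(z^{-4})$ and $O(z^{-6})$ that yield exactly the ranges $0\le n\le 2$ and $0\le m\le 4$, is consistent with the statement.

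As a proof, however, there is a genuine gap, and it is not where you locate it. You attribute the main difficulty to the cubic normal-ordering of $\Wb(z)$ and its contact terms. In the probabilistic model the real obstruction is the interaction: Gaussian integration by parts against a vertex operator does not only produce poles at the insertions $z_k$, it also produces integrals of $\ps{V_{\gamma e_i}(x)V_\beta(t)\V}$ against the bulk and boundary GMC measures — this is precisely Lemma~\ref{lemma:GaussianIPP}, with the $\It$, $\Is$, $\Ir$ contributions. These integrals are singular as $x\to t$, are individually divergent, and force the descendants to be \emph{defined} as limits after subtracting explicit remainder terms such as~\eqref{eq:rem_Ln} and~\eqref{eq:rem_Wn}, controlled via Stokes' formula and the fusion estimates of Lemma~\ref{lemma:fusion}. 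Your contour-deformation step silently assumes these potential terms either vanish or reassemble into the same residues as in the free field; they do not in general — the leftover pieces are exactly what produce the nontrivial right-hand sides in Theorems~\ref{thm:sing1_intro} and~\ref{thm:sing23_intro}, and showing that for the Ward identities they recombine into the operators $\Lc_{-1}^{(k)}$, $\Wc_{-1}^{(k)}$, $\Wc_{-2}^{(k)}$ with no extra terms is the analytic content of the companion paper's proof. A second, structural difference: the paper never defines $\L_{-n}V_\beta$ or $\Wb_{-n}V_\beta$ by contour integrals of a current; they are defined directly as renormalized limits of explicit multilinear forms in $\partial^k\Phi$ evaluated at $t$, which avoids making sense of $\ps{\SET(z)V_\beta(t)\V}$ for $z$ approaching the real line, where the boundary potential lives and the cosmological constants $\mu_i$ jump at the $s_l$ — a deformation of a contour through $\R$ is exactly the step your plan would have to justify and does not.
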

In this statement we have defined $\Wc_{-j}^{(k)}\ps{V_\beta(t)\V}\coloneqq \ps{\Wb_{-j}V_{\alpha_k}(z_k)\prod_{l\neq k}V_{\alpha_l}(z_l)}$ for $j=1,2$ and $k\in\{1,\cdots,N\}\cup\{2N+1,2N+M\}$, while for $k\in\{N+1,...,2N\}$ we set $\Wc_{-j}^{(k)} = \overline{\Wc}_{-j}^{(k-N)}$. The descendant fields $\L_{-n}V_\alpha$ and $\Wb_{-n}V_\alpha$ have been rigorously defined in~\cite{CH_sym1} where they are represented as functionals of the Toda field.

\subsection{Singular vectors and representation theory of $W$-algebras}
	The existence of Ward identities associated with this higher-spin current is a general feature of a CFT model that enjoys $W$-symmetry in addition to conformal invariance. They are concrete realizations, at the level of correlation functions, of this level of symmetry. There are, however, additional constraints that are more dependent on the particular theory under investigation and more precisely on the corresponding representation of the algebra of symmetry that is present within the model. As we shall see, this manifests itself in the form of \emph{higher equations of motion}, which in some sense describe the form of the $W_3$-modules that are associated with the boundary $\sl_3$ Toda CFT.
	
	\subsubsection{Representation theory of $W$-algebras}
	As already mentioned, the notion of $W$-algebras originates from the seminal work of Zamolodchikov~\cite{Za85}, in which he defined (at the formal level) the $W_3$-algebra in order to extend the conformal bootstrap method from~\cite{BPZ} to models that enjoy higher-spin symmetry. On the mathematical side, the framework in which these $W$-algebras are rigorously considered is that of Vertex Operator Algebras~\cite{Borcherds, FLM89}, which is now a key feature of the mathematical understanding of two-dimensional CFT. The study of the notion of $W$-algebra within this framework was initiated by Feigin-Frenkel~\cite{FF_QG, FF_DS} and subsequently by Kac-Roan-Wakimoto~\cite{KRW}, and is nowadays a very active field of research. And in particular the topic of representation theory of $W$-algebras has proved to be a very thriving one~\cite{FKW,Arakawa_rep}, one striking result being the character formula disclosed in~\cite[Theorem 7.7.1]{Arakawa_rep} and that matches the corresponding one for irreducible highest-weight representations of affine Lie algebras. In particular, this character formula allows to predict the existence of \textit{singular vectors} in the study of Toda CFTs. We refer to~\cite{Arakawa_intro} for an introduction to the representation theory of $W$-algebras. 
	
	\subsubsection{Probabilistic implications}
	Though it might not be clear at first sight what is the connection between representation theory of $W$-algebras and the probabilistic model that we define here, we will see that thanks to the explicit realization of the $W_3$-algebra with which we work we are actually able to translate into probabilistic terms some of these algebraic results.
	
	One example of such concrete implications is the existence of \textit{singular vectors} within our probabilistic model, and of the \textit{higher equations of motion} they give rise to. To be more specific, it is predicted by the representation theory of the $W_3$ algebra that if the weight of the Vertex Operator $V_\alpha$ takes some specific values, \textit{e.g.} $\alpha=\kappa\omega_1$ for some $\kappa\in\R$ (here, $(\omega_1,\omega_2)$ is the dual basis of the simple roots $(e_1,e_2)$), then there are non-trivial relations between its Virasoro and $W$-descendants. In this example this means that at the level of the free-field, that is, when all the cosmological constants $\mu_{B,i}$ and $\mu_i$ are identically $0$~\footnote{In this setting we have to make an additional assumption on the insertion weights called \emph{neutrality condition}.}, we have an equality of the form $\psi_{-1,\alpha}=0$, with
	\begin{equation}\label{eq:sing1_intro}
		\psi_{-1,\alpha}\coloneqq \left(\Wb_{-1}-\frac{3w(\alpha)}{2\Delta_\alpha} \L_{-1}\right)V_\alpha
	\end{equation}
	and where the above are the descendant fields associated with the primary $V_\alpha$. The corresponding Vertex Operator is then referred to as  \textit{semi-degenerate field}. However, this equality only holds at the level of the free-field, and is no longer valid when we consider the boundary $\sl_3$ Toda CFT. That is, we prove that generically and when $V_\alpha$ is a boundary insertion, this equation may no longer be true depending on the values of the cosmological constants in a neighborhood of this insertion. To this end we set for $t\in\R$ and $i=1,2$ $\mu_{L,i}\coloneqq \mu_{i}(t^-)$ and $\mu_{R,i}\coloneqq \mu_i(t^+)$. Then the singular vectors at level $1$ are given as follows:
	\begin{theorem}\label{thm:sing1_intro}
		Assume that $\beta=\kappa\omega_1$ for $\kappa<q$. Then for $t\in\R$
		\begin{equation*}
			\psi_{-1,\beta}(t)=2(q-\kappa)(\mu_{L,2}-\mu_{R,2})V_{\beta+\gamma e_2}(t)    
		\end{equation*}
		in the sense that for any $(\beta,\bm\alpha)\in\mc A_{N,M+1}$ and distinct insertions $(z_1,\cdots,z_N,t,s_1,\cdots,s_M)$
		\begin{equation}
			\begin{split}
				\ps{\psi_{-1,\beta}(t)\V}&=2(q-\kappa)(\mu_{L,2}-\mu_{R,2})\ps{V_{\beta+\gamma e_2}(t)\V}.
			\end{split}
		\end{equation} 
        Here the left-hand side is defined by setting, in agreement with Equation~\eqref{eq:sing1_intro}
        \begin{equation}
            \ps{\psi_{-1,\beta}(t)\V}\coloneqq \ps{\Wb_{-1}V_{\beta}(t)\V}-\frac{3 w(\alpha)}{2\Delta_\alpha}\ps{\L_{-1}V_{\beta}(t)\V}
        \end{equation}
        where we have used the shorthand $\V=\prod_{k=1}^NV_{\alpha_k}(z_k)\prod_{l=1}^MV_{\beta_l}(s_l)$.
	\end{theorem}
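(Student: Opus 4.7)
The plan is to use the explicit realization of the descendant fields $\L_{-1}V_\beta(t)$ and $\Wb_{-1}V_\beta(t)$ as functionals of the Toda field constructed in the companion paper~\cite{CH_sym1}. Substituting these expressions into the linear combination defining $\psi_{-1,\beta}$ produces an object whose correlation functions can be decomposed into a purely Gaussian (free-field) part and contributions weighted by the bulk and boundary cosmological constants. The key algebraic input is that, when $\beta = \kappa\omega_1$, the Gaussian part vanishes identically: this reflects the classical fact that $V_{\kappa\omega_1}$ is a semi-degenerate field in the Fock representation of the $W_3$ algebra, and is precisely the algebraic motivation for considering the combination $\psi_{-1,\beta}$ in the first place.

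After this cancellation, the surviving terms come from the interaction. A convenient way to access them is through a Girsanov-type shift of the Neumann Gaussian Free Field underlying the probabilistic construction (or, equivalently, through a formal expansion in the cosmological constants), which expresses each contribution as an integral of a free-field correlator of the form $\ps{\psi_{-1,\beta}(t)V_{\gamma e_i}(w)\V}_{\mathrm{free}}$, weighted against $\mu_{B,i}\,d^2w$ over $\H$ or $\mu_i\,dw$ over $\R$. For the bulk contributions, a Stokes-type argument combined with the local holomorphy of the currents $\Wb$ and $\SET$ away from insertions, as reflected in the global Ward identities from Theorem~\ref{thm:ward_intro}, should cancel everything. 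The boundary contributions, however, cannot be dispatched by such a holomorphic argument, since the one-dimensional integration along $\R$ is sensitive to the jumps of $\mu_i$ at $t$.

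The technical heart of the proof is then the local analysis near $t$ of the remaining boundary integrals. For $\beta=\kappa\omega_1$ the $e_1$-screening contribution should vanish by design: $\psi_{-1,\beta}$ has been arranged so that its operator product with $V_{\gamma e_1}$ produces no local operator at $t$, a consequence of the same singular-vector identity that killed the free-field term. The $e_2$-screening, on the contrary, has a non-trivial operator product with $V_\beta$ which, upon integrating $w$ along the real line and splitting the integral at $t$, isolates the jump $\mu_{L,2}-\mu_{R,2}$ multiplied by a local operator $V_{\beta+\gamma e_2}(t)$; the coefficient $2(q-\kappa)$ should emerge as a residue in this local computation. The main obstacle, I expect, is the rigorous justification of the various limit procedures (regularization of the distributional fields, interchange of expectation and limit, precise control of the integrable singularity of the integrand at $t$) and the careful bookkeeping needed to pin down the numerical coefficient; the vanishing of the $e_1$-piece is also likely to require a somewhat delicate algebraic verification, since the relevant operator product is a priori only regular enough to produce a finite, not vanishing, contribution.
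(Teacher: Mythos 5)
Your proposal follows essentially the same route as the paper: the identity $\frac{3w(\beta)}{2\Delta_\beta}\L_{-1}^\beta=\Wb_{-1}^\beta$ for $\beta=\kappa\omega_1$ makes the two regularized correlation functions agree exactly, so $\ps{\psi_{-1,\beta}(t)\V}$ reduces to the limit of the difference of the remainder terms from Equations~\eqref{eq:rem_Ln} and~\eqref{eq:rem_Wn}, the bulk part drops out at level one (the Stokes kernel $\Im\left(\frac{1}{(x+i\delta-t)^{n-1}}\right)$ vanishes for $n=1$), and only the $e_2$-screening survives, producing the jump $\mu_{L,2}-\mu_{R,2}$ times $\ps{V_{\beta+\gamma e_2}(t)\V}$ with coefficient $\left(q-\tfrac{2\kappa}{3}\right)+\left(q-\tfrac{4\kappa}{3}\right)=2(q-\kappa)$. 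The only place your account differs is the $e_1$-piece, which you anticipate needing a delicate OPE argument: in the paper it is disposed of by the exact identity $\tilde{\mathfrak W}_{-1}^1=\frac{3w(\beta)}{2\Delta_\beta}\tilde{\mathfrak L}_{-1}^1$ holding at finite $\eps$, so that the $e_1$ contribution cancels identically before any limit is taken --- which matters, since $\ps{\kappa\omega_1,\gamma e_1}=\gamma\kappa$ is generically nonzero and the fusion of $V_{\gamma e_1}(t\pm\eps)$ with $V_\beta(t)$ is a priori singular, so the individual terms need not converge.
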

	
	As explained before, a semi-degenerate field is defined by making the assumption that its weight $\alpha$ is of the form $\alpha=\kappa\omega_1$ for some $\kappa\in\R$. If $\kappa$ is no longer generic but takes some specific values, namely $\kappa\in\{-\gamma,-\frac2\gamma\}$, then it gives rise to a so-called \textit{fully-degenerate field}, in which case there are two additional singular vectors in the free-field theory:
	\begin{equation}\label{eq:sings_intro}
        \begin{split}
		&\psi_{-2,\chi}\coloneqq \left(\Wb_{-2}+\frac4\chi\L_{-(1,1)}+\frac{4\chi}{3}\L_{-2}\right)V_\alpha\qt{and}\\
		&\psi_{-3,\chi}\coloneqq\left(\Wb_{-3}+\left(\frac\chi3+\frac2\chi\right)\L_{-3}-\frac4\chi\L_{-(1,2)}-\frac8{\chi^3}\L_{-(1,1,1)}\right)V_\alpha.
        \end{split}
	\end{equation}
	Here the notation $\L_{-(1,1)}V_\alpha$ refers to the second-order descendant $\L_{-1}\L_{-1}V_\alpha$, and likewise for $\L_{(-1,2)}$ and $\L_{(-1,1,1)}$. Like before in the $\sl_3$ boundary Toda CFT, and in opposition to the bulk theory, these will generically not be \textit{null vectors} and as such will give rise to higher equations of motion, which take the following form:
	\begin{theorem}\label{thm:sing23_intro}
    Assume that $\beta=-\chi\omega_1$ with $\chi\in\{\gamma,\frac2\gamma\}$.
    Then the singular vector at level two takes the form (in the sense of Theorem~\ref{thm:sing1_intro})
	\begin{eqs}
		&\psi_{-2,\frac2\gamma}=2\gamma(\mu_{L,2}-\mu_{R,2})\Db_{-1,\frac2\gamma}V_{\beta+\gamma e_2}+2(\frac2\gamma-\gamma)V_{\beta+\gamma e_1}\\
        &\psi_{-2,\gamma}=\frac4\gamma(\mu_{R,2}-\mu_{L,2})\Db_{-1,\gamma}V_{\beta+\gamma e_2}+2\gamma c_\gamma(\bm\mu)V_{\beta+2\gamma e_1}
	\end{eqs}
	where $\Db_{-1,\chi}$ is of the form $a\L_{-1}+b\Wb_{-1}$, while
		\begin{equation}
			c_\gamma(\bm\mu)\coloneqq\left(\mu_{L,1}^2+\mu_{R,1}^2-2\mu_{L,1}\mu_{R,1}\cos\left(\pi\frac{\gamma^2}{2}\right)-\mu_{B,1}\sin\left(\pi\frac{\gamma^2}{2}\right)\right)\frac{\Gamma\left(\frac{\gamma^2}{2}\right)\Gamma\left(1-\gamma^2\right)}{\Gamma\left(1-\frac{\gamma^2}{2}\right)}\cdot
		\end{equation}

        As for the singular vectors at the level three, under the additional assumption that $\mu_{L,2}-\mu_{R,2}=0$ we can find $a,b$ for which $\tilde\Db_{-1,\chi}\coloneqq a\L_{-1}+b\Wb_{-1}$ is such that 
        \begin{eqs}
            \psi_{-3,\chi}=\tilde\Db_{-1,\chi}\psi_{-2,\chi}.
        \end{eqs}
	\end{theorem}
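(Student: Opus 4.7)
The approach is to extend the strategy developed for Theorem~\ref{thm:sing1_intro}, treating the boundary and bulk potentials one at a time. By the probabilistic construction of the descendant fields in~\cite{CH_sym1}, the combinations $\psi_{-2,\chi}$ and $\psi_{-3,\chi}$ can be realised as explicit residues of contour integrals of $\SET$ and $\Wb$ around the boundary insertion $t$. Deforming these contours against the other insertions via the Ward identities of Theorem~\ref{thm:ward_intro}, one decomposes the resulting correlation function into a purely Gaussian (free-field) part, supplemented by interaction contributions driven by $\mu_{B,i}$, $\mu_{L,i}$ and $\mu_{R,i}$ at $t$. The fact that $\psi_{-n,\chi}$ is a singular vector in the free-field Fock module for $\chi \in \{\gamma, \frac2\gamma\}$---an algebraic identity in the $W_3$ algebra that can be verified directly from~\eqref{eq:sings_intro}---ensures that the Gaussian part vanishes, leaving only the interaction contributions to analyse.

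For the level-two vectors, the boundary potential $e^{\ps{\frac{\gamma}{2}e_i,\phi}}$ produces, via Girsanov-type shifts of the Gaussian measure, operator-product expansions with $V_\beta(t)$. For $i=2$ the leading fusion produces a first-order descendant $\Db_{-1,\chi}V_{\beta+\gamma e_2}(t)$, the specific linear combination of $\L_{-1}$ and $\Wb_{-1}$ being read off from the coefficients in~\eqref{eq:sings_intro}; the discontinuity of $\mu_2$ across $t$ gives the factor $\mu_{L,2}-\mu_{R,2}$. For $i=1$ and $\chi=\frac2\gamma$, the fusion exponent sits exactly at a simple-pole threshold, so that a single absorbed boundary operator yields $V_{\beta+\gamma e_1}(t)$ with the clean coefficient $2\left(\frac{2}{\gamma}-\gamma\right)$, the contributions from the left and the right of $t$ combining symmetrically. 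For $i=1$ and $\chi=\gamma$ the corresponding simple pole is absent, and one instead has to capture the regular contribution from a pair of boundary operators (or a single bulk operator) colliding simultaneously with $V_\beta(t)$; this produces the shifted vertex operator $V_{\beta+2\gamma e_1}(t)$ weighted by the integral $c_\gamma(\bm\mu)$.

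The main technical obstacle is the evaluation of the constant $c_\gamma(\bm\mu)$. It reduces to a Coulomb-gas integral of Dotsenko-Fateev type, with two boundary charges sitting on either side of $t$ (with cosmological constants $\mu_{L,1}$ and $\mu_{R,1}$) and one bulk charge (with cosmological constant $\mu_{B,1}$). The integral is amenable to the contour-deformation and reflection techniques developed for boundary Liouville theory in~\cite{remy1, remy2, ARS} and adapted to the Toda setting in~\cite{Toda_correl1}; the trigonometric combination $\mu_{L,1}^2+\mu_{R,1}^2-2\mu_{L,1}\mu_{R,1}\cos\left(\pi\frac{\gamma^2}{2}\right)-\mu_{B,1}\sin\left(\pi\frac{\gamma^2}{2}\right)$ reflects the monodromy of the integrand across the real axis between the charges, while the ratio of Gamma functions emerges from a Selberg-type evaluation of the remaining radial integral.

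For the level-three identity, the hypothesis $\mu_{L,2}=\mu_{R,2}$ is what makes the statement tractable: it removes the $\Db_{-1,\chi}V_{\beta+\gamma e_2}$ contribution in $\psi_{-2,\chi}$, which then reduces to a multiple of a single vertex operator, namely $V_{\beta+\gamma e_1}$ when $\chi=\frac2\gamma$ and $V_{\beta+2\gamma e_1}$ when $\chi=\gamma$. A parallel analysis of $\psi_{-3,\chi}$---now involving more descendant terms $\L_{-3}, \L_{-(1,2)}, \L_{-(1,1,1)}, \Wb_{-3}$, each producing its own contour-integral contribution---shows that under the same hypothesis the $\mu_2$-driven fusions again cancel and the remaining $\mu_1$-driven contributions combine into a level-one descendant of the same shifted vertex operator. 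Matching the $\L_{-1}$ and $\Wb_{-1}$ coefficients of $\tilde\Db_{-1,\chi}$ applied to this vertex operator against the reduced form of $\psi_{-3,\chi}$ yields a $2\times 2$ linear system whose unique solution defines $a$ and $b$, thereby establishing the desired identity.
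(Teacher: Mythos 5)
Your proposal follows essentially the same route as the paper's proof of Theorems~\ref{thm:deg2} and~\ref{thm:deg3}: since the multilinear form $\Wb_{-2}^\beta+\frac4\chi\L_{-(1,1)}^\beta+\frac{4\chi}3\L_{-2}^\beta$ (and its level-three analogue) vanishes identically, the regularized correlation function is exactly zero, and the singular vector is carried entirely by the limits of the renormalization counterterms localized at $t$, which are then evaluated by fusion asymptotics. Your identification of which fusion channel produces each term on the right-hand side ($i=2$ giving $\Db_{-1,\chi}V_{\beta+\gamma e_2}$ with the jump $\mu_{L,2}-\mu_{R,2}$, a single $\gamma e_1$ charge giving $V_{\beta+\gamma e_1}$ for $\chi=\frac2\gamma$, a pair of boundary charges or one bulk charge giving $V_{\beta+2\gamma e_1}$ for $\chi=\gamma$) matches the paper.

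Three caveats where the sketch papers over the actual content. First, the coefficients $a(\chi),b(\chi)$ of $\Db_{-1,\chi}$ are not ``read off from the coefficients in~\eqref{eq:sings_intro}'': they are determined only after the limit of the counterterms has been computed and recognized as the defining expression of a level-one descendant of the \emph{shifted} field, via the vector identity $3\beta+\gamma e_2=a\L_{-1}^{\beta+\gamma e_2}+b\Wb_{-1}^{\beta+\gamma e_2}$; this reassembly (the computation of the quantities $H$ and $\mathfrak J$ in the proof of Theorem~\ref{thm:deg2}) is a nontrivial step, not a formality. Second, the individual counterterms $\tilde{\mathfrak W}_{-2}$, $\tilde{\mathfrak L}_{-(1,1)}$, $\tilde{\mathfrak L}_{-2}$ are separately divergent; the heart of the proof is checking that in the specific linear combination the singular coefficients collapse (e.g.\ the coefficient of the leading one-fold singularity reduces to $\delta_{i,1}\,2(\chi-\gamma)$), which decides which channels survive for each value of $\chi$ --- your sketch assumes this outcome rather than deriving it. Third, $c_\gamma(\bm\mu)$ does not require Coulomb-gas or monodromy machinery: it comes from the elementary one-dimensional integrals $\int_1^{\infty}y^{-1+\gamma^2/2}\,\norm{y\mp1}^{-\gamma^2}\,dy$ and $\int_{\R}(1+x^2)^{-1+\gamma^2/2}\,dx$ evaluated in Lemmas~\ref{lemma:limtx} and~\ref{lemma:limty}, and the case distinction $\gamma\lessgtr1$ (which governs whether the $V_{\beta+2\gamma e_1}$ term is present at all) is part of the statement you would still need to address.
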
	
    We refer to Theorems~\ref{thm:deg2} and~\ref{thm:deg3} for more precise statements and explicit expressions for the descendant fields involved.
	
	Even if we have not considered in the present document the actual representation of the $W_3$-algebra associated to the boundary $\sl_3$ Toda CFT, we nonetheless stress that an implication of our techniques is the description of the structure of the highest-weight representations of the $W_3$-algebra that can be defined from this model. For instance, our proof shows that if $\alpha=\kappa\omega_1$ is a semi-degenerate field, then the $W_3$-module whose highest-weight is given by $\vert\alpha\rangle$ is isomorphic to the corresponding Verma module if $\mu_{L,2}-\mu_{R,2}\neq0$, while if $\mu_{L,2}-\mu_{R,2}=0$ it is isomorphic to the quotient of this Verma module by the maximal proper submodule, that is the left ideal generated from the null vector $\Wb_{-1}\vert\alpha\rangle-\frac{3w(\alpha)}{2\Delta}\L_{-1}\vert\alpha\rangle=0$. This is a major difference with the highest weight representations for the $W_3$-algebra that are defined from the \textit{bulk} $\sl_3$ Toda CFT, in which case these modules are isomorphic to the irreducible quotient of the Verma module. We plan to investigate these aspects in more detail in a forthcoming work.

    \subsection{Differential equations for the correlation functions}
    The derivation of higher equations of motion at level up to three is a consequence of the existence of singular vectors for the theory. And thanks to the explicit expressions of these singular vectors, we see that it is possible to choose the cosmological constants in such a way that these are zero: this leads to the existence of BPZ-type differential equations satisfied by the correlation functions as we now explain.
    
    \subsubsection{From higher equations of motion to BPZ differential equations}
    Let us consider a correlation function that contains a fully-degenerate field $V_\beta$ with $\beta=-\chi\omega_1$, $\chi\in\{\gamma,\frac2\gamma\}$. We make the additional assumption that the cosmological constants satisfy:
    \begin{equation}\label{eq:condition_mu}
        \begin{split}
            &\mu_{L,2}=\mu_{R,2},\quad \mu_{L,1}=-\mu_{R,1}\qt{for}\chi=\frac2\gamma,\qt{while for}\chi=\gamma:\\
            &\mu_{L,2}=\mu_{R,2},\quad\mu_{L,1}^2+\mu_{R,1}^2-2\mu_{L,1}\mu_{R,1}\cos\left(\frac{\pi\gamma^2}{2}\right)=\mu_{B,1}\sin\left(\frac{\pi\gamma^2}{2}\right).
        \end{split}
    \end{equation}
    Then we see that all the singular vectors are actually null vectors:
    \begin{equation*}
        \psi_{-1,\beta}=\psi_{-2,\beta}=\psi_{-3,\beta}=0.
    \end{equation*}
    This allows to get three identities and express these descendants in terms of derivatives of the correlation functions.
    Namely we have the following:
    \begin{proposition}
        Assume that $\beta=-\chi\omega_1$ with $\chi\in\{\gamma,\frac2\gamma\}$ with the boundary cosmological constants satisfying the assumptions \eqref{eq:condition_mu}. Then for $(\beta,\bm \alpha)\in\mc A_{N,M+1}$:
        \begin{align}
            &\ps{\Wb_{-1}V_\beta(t)\V}=\frac{3w(\beta)}{2\Delta_{\beta}}\partial_t\ps{V_\beta(t)\V};\\
            &\ps{\Wb_{-2}V_\beta(t)\V}=-\left(\frac4\chi\partial_t^2+\frac{4\chi}3\Lc_{-2}\right)\ps{V_\beta(t)\V};\\
            &\ps{\Wb_{-3}V_\beta(t)\V}=\left(\frac8{\chi^3}\partial_t^3+\frac4\chi\partial_t\Lc_{-2}-\left(\frac\chi3+\frac2\chi\right)\Lc_{-3}\right)\ps{V_\beta(t)\V}.
        \end{align}
        In particular under these assumptions we have
        \begin{equation}\label{eq:BPZ_general}
            \begin{split}
                &\left(\sum_{k=1}^{2N+M}\frac{\Wc_{-2}^{(k)}}{t-z_k}+\frac{\Wc_{-1}^{(k)}}{(t-z_k)^2}+\frac{w(\alpha_k)}{(t-z_k)^3}\right) \ps{V_\beta(t)\V}\\
                &=\left(\frac8{\chi^3}\partial_t^3+\frac4\chi\partial_t\Lc_{-2}-\left(\frac\chi3+\frac2\chi\right)\Lc_{-3}\right)\ps{V_\beta(t)\V}.
            \end{split}
        \end{equation}
    Here we have denoted by $\Lc_{-n}$ the following differential operator:
    \begin{equation*}
        \Lc_{-n}=\sum_{k=1}^{2N+M}\left(\frac{-\partial_{z_k}}{(z_k-t)^{n-1}}+\frac{(n-1)\Delta_{\alpha_k}}{(z_k-t)^n}\right).
    \end{equation*}
    \end{proposition}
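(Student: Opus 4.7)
The overall strategy is to use the vanishing of the three singular vectors $\psi_{-1,\beta}$, $\psi_{-2,\beta}$ and $\psi_{-3,\beta}$ under the assumption~\eqref{eq:condition_mu} to obtain three algebraic identities expressing $\ps{\Wb_{-j}V_\beta(t)\V}$ in terms of Virasoro descendants of $V_\beta$, and then to translate each such Virasoro descendant into a differential operator acting on $\ps{V_\beta(t)\V}$ using $\L_{-1}V_\beta(t)=\partial_t V_\beta(t)$ for the translation mode and the local Ward identities of Theorem~\ref{thm:ward_intro} for the higher modes. The BPZ equation~\eqref{eq:BPZ_general} is then obtained by equating the two available expressions for $\ps{\Wb_{-3}V_\beta(t)\V}$: the one just derived, and the one produced by the local $W$-Ward identity of Theorem~\ref{thm:ward_intro} at $n=3$.

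The vanishing of the three singular vectors proceeds as follows. Writing $\beta=-\chi\omega_1=\kappa\omega_1$ with $\kappa=-\chi<q$, Theorem~\ref{thm:sing1_intro} applies and the assumption $\mu_{L,2}=\mu_{R,2}$ of~\eqref{eq:condition_mu} immediately yields $\ps{\psi_{-1,\beta}(t)\V}=0$. For the level-two singular vector, Theorem~\ref{thm:sing23_intro} expresses $\psi_{-2,\chi}$ as a sum of two terms, the first one proportional to $\mu_{L,2}-\mu_{R,2}$ (which vanishes by the same condition) and the second one vanishing from the remaining part of~\eqref{eq:condition_mu}, namely $\mu_{L,1}=-\mu_{R,1}$ when $\chi=2/\gamma$ and $c_\gamma(\bm\mu)=0$ when $\chi=\gamma$. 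Finally, the relation $\psi_{-3,\chi}=\tilde\Db_{-1,\chi}\psi_{-2,\chi}$ from Theorem~\ref{thm:sing23_intro}, valid under $\mu_{L,2}=\mu_{R,2}$, combined with $\psi_{-2,\chi}=0$ gives $\psi_{-3,\chi}=0$.

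Translating the Virasoro descendants into differential operators is then straightforward: $\L_{-1}$ becomes $\partial_t$, and iterating gives $\L_{-(1,1)}\mapsto\partial_t^2$ and $\L_{-(1,1,1)}\mapsto\partial_t^3$; the local Ward identity of Theorem~\ref{thm:ward_intro} at $n=2$ and $n=3$ turns $\L_{-2}$ and $\L_{-3}$ into the differential operators $\Lc_{-2}$ and $\Lc_{-3}$ from the statement. For the mixed descendant $\L_{-(1,2)}$, the plan is to apply $\L_{-1}$ as $\partial_t$ to the already-known identity $\ps{\L_{-2}V_\beta(t)\V}=\Lc_{-2}\ps{V_\beta(t)\V}$, producing $\partial_t\Lc_{-2}\ps{V_\beta(t)\V}$. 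Plugging these substitutions into the null-vector identities obtained from Theorems~\ref{thm:sing1_intro} and~\ref{thm:sing23_intro} gives the three announced formulas for $\ps{\Wb_{-j}V_\beta(t)\V}$. Finally, equating the resulting expression for $\ps{\Wb_{-3}V_\beta(t)\V}$ with the one produced by the local $W$-Ward identity of Theorem~\ref{thm:ward_intro} at $n=3$, and simplifying the signs $\tfrac{-1}{z_k-t}=\tfrac{1}{t-z_k}$ (and similarly for the higher powers, noting that $(3-1)(3-2)/2=1$), yields~\eqref{eq:BPZ_general}.

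The main subtlety I anticipate is not conceptual but combinatorial: one must fix an ordering convention for the mixed descendant $\L_{-(1,2)}$ (either $\L_{-1}\L_{-2}$ or $\L_{-2}\L_{-1}$), keep track of the Virasoro commutator $[\L_{-1},\L_{-2}]=\L_{-3}$, which at the level of correlation functions translates into $[\partial_t,\Lc_{-2}]=\Lc_{-3}$, and verify that the coefficients appearing in the expression of $\psi_{-3,\chi}$ from~\eqref{eq:sings_intro} are compatible with the chosen convention. Once this point is handled the derivation reduces to a direct application of Theorems~\ref{thm:ward_intro}, \ref{thm:sing1_intro}, and~\ref{thm:sing23_intro}.
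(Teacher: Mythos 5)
Your proposal is correct and follows essentially the same route as the paper: the conditions~\eqref{eq:condition_mu} kill the explicit right-hand sides of Theorems~\ref{thm:deg1},~\ref{thm:deg2} and~\ref{thm:deg3}, the resulting null-vector identities are converted into the three displayed formulas via $\L_{-1}\mapsto\partial_t$, Proposition~\ref{prop:L11_der}, Lemmas~\ref{lemma:def_L12} and~\ref{lemma:def_L111}, and the local Ward identities, and \eqref{eq:BPZ_general} follows by equating with the $n=3$ local Ward identity for $\Wb$. The only cosmetic difference is that the paper obtains $\ps{\L_{-(1,2)}V_\beta(t)\V}=\partial_t\Lc_{-2}\ps{V_\beta(t)\V}$ by a direct computation of that descendant (Lemma~\ref{lemma:def_L12}) rather than by formally applying $\partial_t$ to the $\L_{-2}$ Ward identity, so no ordering or commutator bookkeeping is actually needed.
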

    
    Based on these identities let us now consider the general form of Equation~\eqref{eq:BPZ_general}. Then we see that there are $4N+2M$ $W$-descendants involved in this expression (the left-hand side), the rest being differential operators acting on the correlation functions. Now the global Ward identities for the higher-spin current from Theorem~\ref{thm:ward_intro} allow to describe five identities between these $4N+2M$ descendants and the $2$ associated to the degenerate insertion $V_\beta$. This allows to reduce the number of $W$-descendants that appear in Equation~\eqref{eq:BPZ_general} to $4N+2M-5$ plus the ones associated to $V_{\beta}$. Now we already know that the descendants associated to $V_\beta$ are expressible in terms of differential operators. As a consequence the total number of unknowns in Equation~\eqref{eq:BPZ_general} can be brought down to $4N+2M-5$.
    
    This can be done by assuming some of the other insertions to be given by semi-degenerate fields: a bulk semi-degenerate field gives two additional constraints while a boundary semi-degenerate field one constraint. As a consequence we see that in order to have no $W$-descendants left we must be in one of the following cases:
    \begin{itemize}
        \item one generic bulk and one semi-degenerate boundary fields correlation function;
        \item one semi-degenerate bulk and one generic boundary fields correlation function;
        \item two generic and one semi-degenerate boundary fields correlation function.
    \end{itemize}    
    For such correlation functions we obtain a differential equation by inserting a boundary fully-degenerate field:
    \begin{theorem}
		Take $t\in\R$ and $\beta^{\ostar}\coloneqq-\chi\omega_1$ with $\chi\in\{\gamma,\frac2\gamma\}$ and assume that Equation~\eqref{eq:condition_mu} holds. Likewise for a boundary semi-degenerate field $V_{\beta^*}(s)$ with $\beta^*=\kappa\omega_2$ we assume that 
    \begin{equation*}
        \mu_{1}(s^-)=\mu_1(s^+).
    \end{equation*}
	Then we have the following BPZ-type differential equations:
    \begin{itemize}
	    \item \textit{Bulk-boundary correlator:} take $\alpha\in\R^2$ such that $(\alpha,\beta^*,\beta^{\ostar})\in\mc A_{1,2}$. Then:
        \begin{equation*}
		\ps{V_{\alpha}(i)V_{\beta^*}(+\infty)V_{\beta^{\ostar}}(t)}=\norm{t-i}^{-\ps{\beta,\alpha}}\mc H(u(t))
	\end{equation*}
	where $u(t)\coloneqq\frac{1}{1+t^2}$ and $\mc H$ is a solution of a differential hypergeometric equation:
	\begin{equation}\label{eq:hypergeometric1}
		\begin{split}
        &\Big[u\left(A_1+u\partial\right)\left(A_2+u\partial\right)\left(A_3+u\partial\right)-\left(B_1-1+u\partial\right)\left(B_2-1+u\partial\right)u\partial\Big]\mathcal H(u)=0.
		\end{split}
	\end{equation}
	Here the quantities involved are given by
	\begin{equation*}
		\begin{split}
			&A_i\coloneqq \frac\chi4\ps{h_1,2\alpha+\beta^*+\beta^{\ostar}-2Q}+\frac\chi2\ps{h_1-h_i,Q-\alpha},\\
            &B_1\coloneqq \frac12,\quad B_2\coloneqq1+\frac\chi4\ps{e_2,\beta^*-2Q}.
		\end{split}
	\end{equation*}
    \item \textit{Three-point boundary correlation function:} take $\beta_1,\beta_2\in\R^2$ in such a way that $(\beta_1,\beta_2,\beta^*,\beta^{\ostar})\in\mc A_{0,3}$. Then:
     \begin{equation*}
		\ps{V_{\beta_1}(0)V_{\beta^*}(1)V_{\beta_2}(+\infty)V_{\beta^{*}}(t)}=\norm{t}^{\frac\chi2\ps{\omega_1,\beta_1}}\norm{t-1}^{\frac\chi2\ps{\omega_1,\beta_2}}\mc H(t)
	\end{equation*}
	with $\mc H$ is a solution of a differential hypergeometric equation:
	\begin{equation}\label{eq:hypergeometric2}
		\begin{split}
		&\Big[t\left(A_1+t\partial\right)\left(A_2+t\partial\right)\left(A_3+t\partial\right)-\left(B_1-1+t\partial\right)\left(B_2-1+t\partial\right)t\partial\Big]\mathcal H(t)=0
		\end{split}
	\end{equation}
	and where the coefficients that appear are given by
	\begin{equation*}
		\begin{split}
			&A_i\coloneqq \frac\chi2\ps{h_1,\beta_1+\beta_2+\beta_3+\beta^*-2Q}+\frac\chi2\ps{h_i-h_1,\beta_1-Q},\\
            &B_i\coloneqq1+\frac\chi2\ps{h_1-h_i,\beta_2-Q}.
		\end{split}
	\end{equation*}
	\end{itemize}
	\end{theorem}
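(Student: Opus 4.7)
The starting point is the BPZ-type equation \eqref{eq:BPZ_general}, which holds at the boundary fully-degenerate insertion $V_{\beta^{\ostar}}(t)$ thanks to the cancellations produced by the assumption \eqref{eq:condition_mu} combined with Theorem~\ref{thm:sing23_intro}. Our task is to turn the left-hand side of \eqref{eq:BPZ_general}, which is linear in the $W$-descendants $\Wc_{-1}^{(k)}$ and $\Wc_{-2}^{(k)}$ attached to the non-degenerate insertions, into a differential operator acting on the correlation function of primaries alone. Once this is achieved, \eqref{eq:BPZ_general} becomes a closed third order linear ODE in $t$, and it only remains to recognize it as a generalized hypergeometric equation.

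The linear elimination rests on combining the five global $W$-Ward identities of Theorem~\ref{thm:ward_intro} (the ones for $0\le m\le 4$) with the extra relation supplied by the semi-degenerate field $V_{\beta^*}$. Since $\beta^*=\kappa\omega_2$, the relevant relation is the $\omega_1\leftrightarrow\omega_2$ analogue of Theorem~\ref{thm:sing1_intro}: under the extra hypothesis $\mu_1(s^-)=\mu_1(s^+)$, the vector $\psi_{-1,\beta^*}$ vanishes inside correlation functions, providing one linear identity between $\Wc_{-1}^{(k_{\beta^*})}$ and $\Lc_{-1}^{(k_{\beta^*})}$. In both cases of the theorem there are $2N+M=3$ non-degenerate insertions, which, after accounting for the reality constraint $\Wc_{-j}^{(k)}=\overline{\Wc_{-j}^{(k-N)}}$ at the bulk point, leave exactly the number of independent $W$-descendants needed for the five global Ward identities together with the semi-degenerate relation to form a square linear system. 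Inverting this system expresses every $\Wc_{-j}^{(k)}$ in terms of the weights $\Delta_{\alpha_\bullet}$, $w(\alpha_\bullet)$ and of the $\Lc_{-1}^{(k)}=-\partial_{z_k}$ acting on $\ps{\V}$.

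Next I would normalize the configuration by M\"obius invariance: in the bulk-boundary case the bulk point is sent to $i$ and the boundary semi-degenerate to $+\infty$, while in the three-point boundary case the non-degenerate boundary fields are sent to $0$ and $\infty$ and the semi-degenerate to $1$. Using the three global conformal Ward identities one eliminates the $\partial_{z_k}$ in favour of $\partial_t$, and the remaining operators $\Lc_{-2}$, $\Lc_{-3}$ appearing on the right-hand side of \eqref{eq:BPZ_general} become, together with the $W$-elimination above, explicit rational differential operators in the single variable $t$ (or $u(t)=1/(1+t^2)$). Factoring out the correct conformal prefactor $\lvert t-i\rvert^{-\ps{\beta^{\ostar},\alpha}}$, respectively $\lvert t\rvert^{\tfrac{\chi}{2}\ps{\omega_1,\beta_1}}\lvert t-1\rvert^{\tfrac{\chi}{2}\ps{\omega_1,\beta_2}}$, produces the shifts turning $u\partial_u$ into the building blocks $A_i+u\partial$ and $B_i-1+u\partial$, and a direct computation matches the resulting ODE with \eqref{eq:hypergeometric1} or \eqref{eq:hypergeometric2}.

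The main technical obstacle is the explicit inversion of the linear system and the bookkeeping needed to identify the final ODE with a $_3F_2$-hypergeometric equation. The stated indices $A_i,B_i$ are expressed in terms of the pairings $\ps{h_1-h_i,\cdot}$ of the fundamental weights $h_i$, and they can only emerge from delicate cancellations between the background charge $Q$, the Weyl-vector shifts carried by $w(\alpha_\bullet)$ and $\Delta_{\alpha_\bullet}$, and the contributions coming from the semi-degenerate relation. A secondary difficulty lies in the handling of the insertion at infinity: rather than substituting $z=\infty$ in the Ward identities directly, one must take the controlled limit obtained after extracting the scaling factor coming from the conformal transformation $z\mapsto 1/z$, which is the point at which the specific form of the coefficient $B_2=1+\tfrac{\chi}{4}\ps{e_2,\beta^*-2Q}$ in the bulk-boundary equation (involving $e_2$ rather than a generic pairing) appears.
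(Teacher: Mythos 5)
Your plan follows essentially the same route as the paper: start from Equation~\eqref{eq:BPZ_general}, use the five global higher-spin Ward identities together with the level-one semi-degenerate relation (the $\omega_2$ analogue of Theorem~\ref{thm:deg1}, which is a null vector under $\mu_1(s^-)=\mu_1(s^+)$) to eliminate all $W$-descendants, and then identify the resulting closed third-order ODE as a ${}_3F_2$-type hypergeometric equation after extracting the conformal prefactor; your counting of unknowns versus constraints matches the paper's. The paper itself provides no more detail than this — it explicitly delegates the final algebraic identification of the coefficients $A_i,B_i$ to a Mathematica computation — so your proposal is as complete as the published argument.
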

    The proof of this statement is made by starting from Equation~\eqref{eq:BPZ_general}. We then express the left-hand side, by means of the global Ward identities from Theorem~\ref{thm:ward_intro}, in terms of $W_{-i}V_\beta$, $i=1,2$, and $W_{-1}V_{\beta^*}$ where the latter is a semi-degenerate field. We then replace the expression of these descendants using Theorems~\ref{thm:deg1} and~\ref{thm:deg2}, which allows us to show that the correlation function under consideration is a solution of a differential equation. The exact computations leading to these equations are done using Mathematica. 

    \subsubsection{Perspectives of research: integrability of bouyndary Toda CFT}
    The above statement paves the way for the computation of a family of structure constants for the $\sl_3$ boundary  Toda CFT. Indeed, based on these differential equations we expect to be able to provide explicit expressions for the following correlation functions:
    \begin{itemize}
        \item the bulk-boundary correlator: $\ps{V_\alpha(i)V_{\beta^*}(0)}$,
        \item the boundary three-point function: $\ps{V_{\beta_1}(0)V_{\beta^*}(1)V_{\beta_2}(\infty)}$.
    \end{itemize}
    However in order for these quantities to be well-defined one first needs to extend the range of parameters for which the correlations functions make sense. We plan to address this question in a work in progress. In the same fashion as in \cite{Toda_correl1}, this would involve the study of the so-called \emph{reflection coefficients} of the $\sl_3$ boundary Toda CFT. One key point is to show that these coefficients describe the joint tail expansion for correlated bulk and boundary GMC measures, and as such we expect that this would unveil new probabilistic features of these objects. At this stage we should mention that it is not clear yet to what extent the derivation of such structure constants would involve the mating-of-trees framework developed in~\cite{ARSZ} to address a similar issue for boundary Liouville CFT.

    Once the range of definition for these correlation functions appropriately extended we would then combine Operator Product Expansions with a priori study of these differential equations to end up with explicit expressions for the structure constants. This would hopefully allow us to derive new formulas compared to the existing literature.

    \subsubsection{Other directions of work}
    On a different perspective, we also plan to address some questions more directly related to the higher equations of motion we obtained in the present document. Namely we have shown the existence of singular vectors as expected from representation theory of the $W_3$ algebra. It would be an interesting question to define precisely the $W_3$-modules associated to the probabilistic definition of the $\sl$ boundary Toda CFT and describe their structures in agreement with the results obtained here. Likewise it would be an interesting question to understand the features of the singular vectors at higher orders that one expects to find in the theory.

    \textit{\textbf{Acknowledgements:}}	
    The authors would like to thank Rémi Rhodes for his interest and support during the preparation of the present manuscript.
    
	B.C. has been supported by Eccellenza grant 194648 of the Swiss National Science Foundation and is a member of NCCR SwissMAP. B.C. would like to thank Université d'Aix-Marseille for their hospitality (and the very enjoyable sunny weather) and N.H. is grateful to the \'Ecole Polytechnique Fédérale de Lausanne where part of this work has been undertaken.  
	
	\section{Background and preliminary reminders}
    In this section we present the probabilistic construction of the $\sl_3$ boundary Toda CFT and provide some reminders on the rigorous definition of the correlation functions as well as the descendant fields from~\cite{CH_sym1}. We will also describe the method at the heart of the derivation of the singular vectors of the theory.
    
    \subsection{Probabilistic definition of the correlation functions}
    Before explaining the method based on which the descendant fields were constructed in~\cite{CH_sym1} we first briefly recall the probabilistic definition of the correlation functions introduced in~\cite{CH_construction} and used in~\cite{CH_sym1}, to which we refer for additional details.

    \subsubsection{The probabilistic background}
    As explained in the introduction the Toda field is formally speaking a function $\Phi:\overline \H\to\a$ where $\a$ can be identified with $\R^2$. 
    In order to define it as a well-defined random (generalized) function over $\overline \H$, we rely on \textit{Gaussian Free Fields} (GFFs) and their exponentials, defined as \textit{Gaussian Multiplicative Chaos} (GMC) measures. To be more specific the Toda field is defined by first considering 
    \begin{equation}
        \Phi \coloneqq \X - 2Q\ln \norm{\cdot}_+ +\bm{c},
    \end{equation}
    where $\bm{c}\in\R^2$ is a constant called the \textit{zero-mode} and $\X$ is a GFF over $\overline \H$ with Neumann boundary conditions and taking values in $\R^2$. This GFF is a Gaussian random distribution (and more precisely belongs to the Sobolev space with negative index $H^{-1}(\H\to \R^2)$) that is centered and with covariance kernel:
	\begin{equation}
		\begin{split}&\expect{\ps{u,\X(x)}\ps{v,\X(y)}}=\ps{u,v}G(x,y),\qt{with}G(x,y) = G_{\hat{\C}}(x,y)+G_{\hat{\C}}(x,\bar{y})
        \end{split}
	\end{equation} 
    for $u,v\in\R^2$ and $x,y$ in $\overline\H$, and with $G_{\hat{\C}}(x,y)=\ln\frac{1}{\norm{x-y}}+\ln\norm{x}_++\ln\norm{y}_+$ where we have used the notation $\norm{x}_+\coloneqq\max(\norm{x},1)$. 
    It will be convenient in the sequel to regularize it into a smooth function: this can be done based on a smooth mollifier $\eta$ by setting for positive $\rho$ and $\eta_\rho(\cdot)\coloneqq\frac1{\rho^2}\eta(\frac{\cdot}{\rho})$:
	\begin{equation}\label{eq:regularized}
		\X_\rho(x)=\X\star\eta_\rho \coloneqq\int_\H\X_\rho(y)\eta_\rho(x-y)dyd\bar y.
	\end{equation}
	We will also work with the regularized Toda field $\Phi_\rho\coloneqq \Phi\star\eta_\rho$.

    In order to make sense of the Toda correlation functions we will also rely on the theory of \textit{Gaussian Multiplicative Chaos}.  Thanks to it we can define a family of random measures on $\H$ and its boundary $\R$ by considering the following limits~\cite{Ber,RV_GMC} (in probability and in the sense of weak convergence of measures):
	\begin{equation*}
		M_{\gamma e_i}(d^2x)\coloneqq\lim\limits_{\rho\to0}\rho^{\gamma^2}e^{\ps{\gamma e_i,\X_\rho(x)}}dxd\bar x;\quad M^\partial_{\gamma e_i}(dx)\coloneqq\lim\limits_{\rho\to0}\rho^{\frac{\gamma^2}{2}}e^{\ps{\frac\gamma2 e_i,\X_\rho(x)}}dx
	\end{equation*}
	where the $(e_i)_{i=1,2}$ are special elements of $\R^2$ inherited from the Lie algebra structure and defined below. Since $\norm{e_i}^2=2$ we need to assume that $\gamma<\sqrt 2$ to ensure that these random measures are well-defined.

    \subsubsection{On the Lie algebra $\mathfrak{sl}_3$}
    The Toda field considered above takes values in $\mathfrak{a}$, which is the real part of the Cartan subalgebra of the Lie algebra $\sl_3(\C)$. It is identified with $\R^2$ equipped with a basis of so-called \emph{simple roots} $(e_1,e_2)$, and with a scalar product $\ps{\cdot,\cdot}$ given on the simple roots by the Cartan matrix
    \begin{equation*}
		\left(\ps{e_i,e_j}\right)_{i,j}\coloneqq A=\begin{pmatrix}
			2 & -1\\
			-1 & 2
		\end{pmatrix}
	\end{equation*}
    We will also consider the dual basis of $(e_1,e_2)$ which we denote by $(\omega_1,\omega_2)$, and the Weyl vector $\weyl= \omega_1+\omega_2=e_1+e_2$. The background charge $Q$ is then given in terms of the Weyl vector and the coupling constant $\gamma\in(0,\sqrt2)$:
    \begin{equation*}
        Q=\left(\gamma+\frac2\gamma\right)\weyl.
    \end{equation*}
	The fundamental weights in the first fundamental representation $\pi_1$ of $\mathfrak{sl}_3$ with the highest weight $\omega_1$ are given by:
	\begin{equation}\label{eq:definition_hi}
		h_1\coloneqq \frac{2e_1+e_2}{3},\quad h_2\coloneqq \frac{-e_1+e_2}{3}, \quad h_3\coloneqq  -\frac{e_1+2e_2}{3}\cdot
	\end{equation}

    \subsubsection{Definition of the Toda field and of the correlation functions} 
    The mathematical interpretation of the path integral~\eqref{eq:path_integral} has been conducted in~\cite{CH_construction}, where the Toda field $\Phi$ was rigorously constructed based on the objects introduced above. To do so this random distribution was defined by the setting for suitable $F$
    \begin{equation} \label{eq:toda_field}
        \begin{split}
            &\ps{F[\Phi]}=\int_{\R^2} e^{-\ps{Q,\bm{c}}}\E\left[ F(\Phi)\exp\left( -\sum_{i=1}^2 \mu_{B,i} e^{\ps{\gamma e_i,\bm{c}}}M_{\gamma e_i}(\H) + \mu_{i}e^{\ps{\frac{\gamma}{2}e_i,\bm{c}}}M_{\gamma e_i}^\partial(\R)\right)\right]d\bm{c}.
        \end{split}
    \end{equation}
    Based on this defining expression the correlation functions of Vertex Operators are then defined by first considering insertion points $z_1,\cdots,z_N\in\H$ and $s_1,\cdots,s_M\in\R$ to which we associate weights $\alpha_1,\cdots,\alpha_N$ and $\beta_1,\cdots,\beta_M$ in $\R^2$. They are then formally defined by considering the functionals
    \begin{equation*}
        F[\Phi]=\prod_{k=1}^NV_{\alpha_k}(z_k)\prod_{l=1}^MV_{\beta_l}(s_l)\coloneqq \prod_{k=1}^N e^{\ps{\alpha_k,\Phi(z_k)}}\prod_{l=1}^M e^{\ps{\frac{\beta_l}2,\Phi(s_l)}}.
    \end{equation*}
    The corresponding correlation function is then obtained by replacing $F$ by its expression in~\eqref{eq:toda_field}: we denote it by $\ps{\prod_{k=1}^NV_{\alpha_k}(z_k)\prod_{l=1}^MV_{\beta_l}(s_l)}$.

    In greater generality in the boundary case we have additional degrees of freedom that come from the boundary cosmological constants. To be more specific we can actually choose the boundary cosmological constants $\mu_i$ to be piecewise constant, with jumps precisely at the boundary insertion points. To be more specific, let us pick bulk cosmological constants $\mu_{B,i}\in\C$ for $i=1,2$, as well as boundary cosmological constants $\mu_{i,l}\in\C$ for $1\leq l\leq M$. For the correlation functions to be well-defined we further assume that 
    \begin{eqs} 
		&\mu_{B,i} > 0 \qt{and} \Re \ \mu_{i,l} \ge 0 \qt{for all} l=1,...,M,\qt{as well as}\\
        &\ps{\bm{s},\omega_i} > 0\qt{and}\ \ps{\alpha_k - Q,e_i} < 0 \qt{for all} k=1,...,2N+M
	\end{eqs}
    with $\bm{s} \coloneqq \sum \alpha_k + \frac12 \sum \beta_l -Q$.
    We denote by $\mc A_{N,M}$ the set of weights satisfying the above assumptions and define $\mu_i$ to be the piecewise constant measure on $\R$ given by
    $$
        \mu_i(dx) = \sum_{l=1}^M \mu_{i,l+1}\mathds{1}_{(s_l,s_{l+1})}(x) dx
    $$
    with the convention that $s_0=-\infty$, $s_{M+1}=+\infty$ and $\mu_{M+1}=\mu_1$.
    In agreement with~\cite{CH_sym1} the correlation functions are then defined first at the regularized level by considering
    \begin{equation} \label{eq:reg correl shifted}
        \begin{split}
            &\ps{F[\Phi]\prod_{k=1}^NV_{\alpha_k}(z_k)\prod_{l=1}^MV_{\beta_l}(s_l)}_{\delta,\eps,\rho} \coloneqq C(\bm{z},\boldsymbol{\alpha})\int_{\R^2} e^{\ps{\bm{s},\bm{c}}}\E\Big[ F[\Phi_\rho + H]\times\\
            &\left.\exp\left( -\sum_{i=1}^2 \mu_{B,i} e^{\ps{\gamma e_i,\bm{c}}}\int_{\Heps} Z_{i,\rho}\rho^{\gamma^2} e^{\ps{\gamma e_i,\X_\rho(x)}} dxd\bar{x} + e^{\ps{\frac{\gamma}{2}e_i,\bm{c}}}\int_{\Reps} Z^\partial_{i,\rho}\rho^{\frac{\gamma^2}{2}} e^{\ps{\frac{\gamma}{2} e_i,\X_\rho(x)}} \mu_i(dx)\right)\right]d\bm{c}
        \end{split}
    \end{equation}
    where, using the shorthand $\bm z=(z_k)_{k=1,...,2N+M}$ for $(z_1,...,z_N,\bar{z}_1,...,\bar{z}_N,s_1,...,s_M)$ and $\bm\alpha=(\alpha_k)_{k=1,...,2N+M}$ for $(\alpha_1,...,\alpha_N,\alpha_1,...,\alpha_N,\beta_1,...,\beta_M)$,  we have set:
    $$
    C(\bm{z},\boldsymbol{\alpha}) = \prod_{k< l} |z_k-z_l|^{-\ps{\alpha_k,\alpha_l}} \prod_{k=1}^M |z_k-\bar{z}_k|^{\frac{|\alpha_k|^2}{2}},\quad 
    H = \sum_{k=1}^{2N+M} \alpha_k G_{\hat{\C}}(\cdot,z_k),
    $$
    $$
    Z_{i,\rho} = \prod_{k=1}^{2N+M} \left( \frac{|x|_+}{|z_k-x|}\right)^{\ps{\gamma e_i,\alpha_k}} \qt{and} Z^\partial_{i,\rho} = \prod_{k=1}^{2N+M} \left( \frac{|x|_+}{|z_k-x|}\right)^{\ps{\frac{\gamma}{2} e_i,\alpha_k}}.
    $$
    The dependence in the parameter $\rho$ stands for the regularization of the Toda field, while the ones in $\delta,\eps$ apply to the domains of integration $\Heps$ and $\Reps$ which are meant to avoid the singularities at the insertion points. They are defined by 
    \begin{equation*}
    \Heps = \left(\H + i\delta\right) \setminus \bigcup_{k=1}^N B(z_k,\eps)\qt{and}\Reps = \R \setminus \bigcup_{l=1}^M (s_l-\eps,s_l+\eps),
    \end{equation*}
    with $\eps$ and $\delta$ small enough in such a way that all the $B(z_k,\eps)$ lie in $\Heps$ and are disjoint, and likewise all the intervals $(s_l-\eps,s_l+\eps)$ are disjoint. 
    
    \subsection{Construction of the descendant fields}
    Based on the above we then wish to define correlation functions containing descendant fields of the Vertex Operators. To this end we recall here the method of construction of these quantities carried out in~\cite{CH_sym1} and provide the necessary background from there required to make make sense of the singular vectors of the theory.

    \subsubsection{Descendant fields at the regularized level}
    The definition of the descendant fields is based on the consideration of regularized expressions of the form
    \[
        \ps{F[\Phi]V_\beta(t)\prod_{k=1}^NV_{\alpha_k}(z_k)\prod_{l=1}^MV_{\beta_l}(s_l)}_{\delta,\eps,\rho}
    \]
    where $F[\Phi]$ is a polynomial in the derivatives of the Toda field $\Phi$ evaluated at $t$. This polynomial is determined by conducting algebraic computations based on the probabilistic representation of the currents $\SET$ and $\Wb$, see~\cite[Subsection 2.2.2]{CH_sym1} for more details on this algorithmic procedure. Conducting these manipulations leads to the consideration of functionals of the Toda field of the form 
	\begin{equation*}
		\L_{-\lambda}V_\alpha[\Phi]=\sum_{\norm{\nu}=\norm{\lambda}}\L_{-\nu;\lambda}^\alpha\left(\partial^{\nu_1}\Phi,\cdots,\partial^{\nu_l}\Phi\right)V_\alpha[\Phi]
	\end{equation*}
	where the $\L_{-\nu;\lambda}^\alpha$ are multilinear forms, while $\lambda$ is a Young diagram, that is $\lambda=(\lambda_1,\cdots,\lambda_l)$ with $\lambda_1\geq\cdots\geq\lambda_l$, and where the sum ranges over Young diagrams $\nu$ with $\norm{\nu}\coloneqq\nu_1+\cdots+\nu_l=\norm{\lambda}$. In the present article we will only need to consider descendants of the form $\L_{-n}V_\beta$ for $n\geq 1$ as well as $\L_{-(1,n)}V_\beta$ for $n=1,2$ and $\L_{-(1,1,1)}$, in which case these multilinear forms are given by
	\begin{equation}
		\begin{split}
            &\L_{-n}^\alpha[\Phi]=\left\langle(n-1)Q+\alpha,\frac{\partial^n\Phi}{(n-1)!}\right\rangle-\sum_{i=0}^{n-2}\left\langle\frac{\partial^{i+1}\Phi}{i!},\frac{\partial^{n-i-1}\Phi}{(n-2-i)!}\right\rangle\\
            &\L_{-(1,1)}^\alpha[\Phi]\coloneqq \ps{\alpha,\partial^2\Phi}+ \left(\ps{\alpha,\partial\Phi}\right)^2,\\
			&\L_{-(1,2)}^\alpha[\Phi]\coloneqq \ps{Q+\alpha,\partial^3\Phi}+ \ps{Q+\alpha,\partial^2\Phi}\ps{\alpha,\partial\Phi}-2\ps{\partial^2\Phi,\partial\Phi} -\ps{\partial\Phi,\partial\Phi}\ps{\alpha,\partial\Phi},\\
			&\L_{-(1,1,1)}^\alpha[\Phi]\coloneqq \ps{\alpha,\partial^3\Phi}+ 3\ps{\alpha,\partial^2\Phi}\ps{\alpha,\partial\Phi}+\left(\ps{\alpha, \partial\Phi}\right)^3.
		\end{split}
	\end{equation}
    A similar procedure remains valid for the descendant fields associated to the higher-spin current, but the defining expression for them then involves quantities related to the Lie algebra structure. We don't provide their expression here since they won't be needed in the rest of the document ---they are given in~\cite[Equation (2.7)]{CH_sym1}.

    To make sense of the insertion of such quantities within correlation functions we first need to define such quantities at the regularized level. This is done by setting for $\delta,\eps,\rho>0$ fixed:
    \begin{eqs}\label{eq:desc_reg}
		&\ps{\L_{-\lambda}V_\beta(t)\V}_{\delta,\eps,\rho} \coloneqq\sum_{\norm{\nu}=\norm{\lambda}}\ps{:\L_{-\nu;\lambda}^\alpha\left(\partial^{\nu_1}\Phi,\cdots,\partial^{\nu_l}\Phi\right)V_\beta(t):\V}.
	\end{eqs}
    where the notation $:F[\Phi]V_\beta(t):$ refers to $F[\Phi+\beta\ln\norm{\cdot-t}]V_\beta(t)$, while we introduce the shorthand $\V\coloneqq\prod_{k=1}^NV_{\alpha_k}(z_k)\prod_{l=1}^MV_{\beta_l}(s_l)$. This can be expressed explicitly based on the following Gaussian integration by parts formula~\cite[Lemma 2.3]{CH_sym1}:
	\begin{lemcite}\label{lemma:GaussianIPP}
		Assume that $\bm\alpha$ belongs to $\mc A_{N,M}$. Then for any $t\in\R\setminus\{s_1,\cdots,s_M\}$, $p_1,\cdots,p_m$ positive integers and $u\in\R^2$:
		\begin{equation}\label{eq:IPP_product}
		\begin{split}
			&\lim\limits_{\rho\to0}\frac{1}{(p_1-1)!}\Big\langle:\prod_{l=1}^m\ps{u_l,\partial^{p_l}\Phi(t)}:\V\Big\rangle_{\rho,\eps,\delta}=\sum_{k=1}^{2N+M}\frac{\ps{u_1,\alpha_k}}{2(z_k-t)^{p_1}}\ps{:\prod_{l=2}^m\ps{u_l,\partial^{p_l}}\Phi(t):\V}_{\eps,\delta}\\
			& -\sum_{i=1}^2\mu_{B,i}
			\int_{\Heps}\left(\frac{\ps{u_1,\gamma e_i}}{2(x-t)^{p_1}}+\frac{\ps{u_1,\gamma e_i}}{2(\bar x-t)^{p_1}}\right)\ps{:\prod_{l=2}^m\ps{u_l,\partial^{p_l}\Phi(t)}:V_{\gamma e_i}(x)\V}_{\eps,\delta}dxd\bar x\\
			&-\sum_{i=1}^2
			\int_{\Reps}\frac{\ps{u_1,\gamma e_i}}{2(x-t)^{p_1}}\ps{:\prod_{l=2}^m\ps{u_l,\partial^{p_l}\Phi(t)}:V_{\gamma e_i}(x)\V}_{\eps,\delta}\mu_{i}(dx).
		\end{split}
	\end{equation}
	\end{lemcite}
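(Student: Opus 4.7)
The plan is to establish the identity at the regularized level (with $\rho,\eps,\delta>0$ fixed) via Gaussian integration by parts on the GFF $\X_\rho$, and then to pass to the limit $\rho\to 0$ using the convergence results underlying the construction of the correlation functions. The leading factor $\ps{u_1,\partial^{p_1}\Phi(t)}$ is, after the shift by $-2Q\ln\norm{\cdot}_+$ and the zero mode, a linear functional of $\X_\rho$; applied to any polynomial $P$ in the field together with the GMC exponential, the standard Gaussian IBP formula reads
\begin{equation*}
\E\big[\ps{u_1,\partial^{p_1}\X_\rho(t)}\,P(\X)\,e^{\Theta}\big]=\E\big[\partial_{\ps{u_1,\partial^{p_1}\X_\rho(t)}}\big(P(\X)\,e^{\Theta}\big)\big]
\end{equation*}
where $\Theta$ collects the GMC potentials and the derivation in the right-hand side is along the Cameron--Martin direction dual to the Gaussian variable $\ps{u_1,\partial^{p_1}\X_\rho(t)}$. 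The relevant Cameron--Martin kernel is obtained by differentiating the covariance $G(\cdot,t)$ in the holomorphic variable $t$ exactly $p_1$ times.

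First I would record the three sources of contribution produced by this derivative. Each Vertex Operator $V_{\alpha_k}(z_k)$ yields a deterministic factor $\ps{\alpha_k,\Phi}$ in its $H$-shift, and differentiating the associated Green kernel produces the residue $\frac{\ps{u_1,\alpha_k}}{(z_k-t)^{p_1}}$; the factor $\frac{1}{2}$ accounts for the fact that on the Neumann half-plane the covariance is $G(x,y)=G_{\hat\C}(x,y)+G_{\hat\C}(x,\bar y)$, so that for a boundary point $t\in\R$ one picks up half of a full-plane contribution. The bulk GMC measures $M_{\gamma e_i}$ produce, through the derivative of the same covariance in the bulk variable $x\in\H$, the two symmetric kernels $\frac{1}{(x-t)^{p_1}}$ and $\frac{1}{(\bar x-t)^{p_1}}$, precisely because the holomorphic derivative of $G(x,\cdot)$ at $t$ has poles both at $x$ and at $\bar x$. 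Finally the boundary GMC measures $M^\partial_{\gamma e_i}$ yield a single kernel $\frac{1}{(x-t)^{p_1}}$ since on $\R$ the two poles $x$ and $\bar x$ collapse; the combinatorial factor $\frac{1}{(p_1-1)!}$ encodes the differentiation $\partial_t^{p_1}$ acting on $\ln\frac{1}{|x-t|}$. Iterating the same formula removes each of the $m$ linear Gaussian factors $\ps{u_l,\partial^{p_l}\Phi(t)}$ in turn; the cross-contractions among them vanish in the limit because they produce lower-order pole structures killed by the Wick renormalization $:\,:$ and by the smoothness of the test kernels away from $t$.

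The second part is the passage to the limit $\rho\to 0$. Here I would rely on the framework of~\cite{CH_sym1}: for $\bm\alpha\in\mc A_{N,M}$ all the regularized correlation functions together with insertions of polynomial functionals of derivatives of $\Phi_\rho$ and additional GMC points converge as $\rho\to 0$, with uniform $L^1$-bounds in $\rho$ inherited from the Seiberg bounds imposed in $\mc A_{N,M}$. The left-hand side converges by definition~\eqref{eq:desc_reg}; each of the three terms on the right-hand side converges by the same bounds, the GMC integrals against the residue kernels $\frac{1}{(x-t)^{p_1}}$ being integrable on $\Heps$ and $\Reps$ since $t$ is kept away from the real insertion points and the domain of integration stays at distance $\eps$ from the real boundary insertions and at height $\delta$ from the real line.

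The main obstacle is not the algebraic identity itself, which is a direct consequence of Gaussian IBP once the covariance derivatives are computed, but rather the justification of the Cameron--Martin manipulation in the presence of the exponential GMC potentials, for which $\Theta$ is not a bounded functional of $\X_\rho$. The cleanest way around this is to first prove the identity with $\Theta$ replaced by a bounded truncation and then pass to the limit by dominated convergence using the integrability provided by the neutrality and Seiberg conditions in the definition of $\mc A_{N,M}$; the asymmetry between the bulk and boundary contributions in the resulting formula (two pole terms versus one) is then a clean by-product of the Neumann form of the covariance, rather than an independent source of difficulty.
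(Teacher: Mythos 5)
This lemma is quoted from the companion paper (\cite[Lemma 2.3]{CH_sym1}) and is not proved in the present article, so there is no in-paper proof to measure your attempt against. Your strategy --- Gaussian integration by parts in the Cameron--Martin direction dual to $\ps{u_1,\partial^{p_1}\X_\rho(t)}$, collecting one contribution from the deterministic shift $H$, one from each bulk GMC potential and one from each boundary GMC potential, then sending $\rho\to0$ --- is the standard argument and certainly the intended one; your remark that the only delicate point is differentiating through the exponential potential is also well placed (and in fact that potential is bounded by $1$ since $\mu_{B,i}\geq 0$ and $\Re\,\mu_{i,l}\geq0$, so only the Cameron--Martin derivative of the GMC mass needs an integrability bound).

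Three of your supporting justifications are incorrect as written, and for a lemma whose entire content is a set of exact constants they are worth fixing. (i) The factor $\tfrac12$ does not come from ``picking up half of a full-plane contribution'' at the boundary: the Neumann covariance $G(x,t)=G_{\hat\C}(x,t)+G_{\hat\C}(x,\bar t)$ is \emph{doubled} when $t\in\R$. The $\tfrac12$ is the holomorphic-derivative convention $\frac{1}{(p_1-1)!}\partial_t^{p_1}\ln\frac{1}{\norm{z-t}}=\frac{1}{2(z-t)^{p_1}}$, with the two images $x$ and $\bar x$ appearing separately in the bulk term and coalescing on $\R$, where the doubling is compensated by the exponent $\ps{\tfrac\gamma2 e_i,\cdot}$ of the boundary potential; likewise the sum over $k=1,\dots,2N+M$ already lists each bulk insertion together with its conjugate. (ii) The contractions among the factors $\ps{u_l,\partial^{p_l}\X_\rho(t)}$ do not ``vanish in the limit'' and are not ``lower-order pole structures'': they are divergent constants of order $\rho^{-(p_1+p_l)}$, and they must be cancelled \emph{exactly} by the counterterms built into the normal ordering $:\cdot:$ --- that exact cancellation is what makes the right-hand side close on the normal-ordered product over $l\geq2$. (iii) Your integrability claim for the boundary term does not hold as stated: $\Reps$ only excises neighbourhoods of the $s_l$, so for $t\in\R\setminus\{s_1,\dots,s_M\}$ the kernel $(x-t)^{-p_1}$ is singular at an interior point of $\Reps$ and the displayed integral is not absolutely convergent for $p_1\geq 2$. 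In every application of the lemma $t$ carries an insertion $V_\beta(t)$ and the excised set includes $(t-\eps,t+\eps)$; that convention (or a principal-value reading combined with the subsequent $\Is$ and Stokes manipulations) is needed for the right-hand side to make sense. None of this changes the overall architecture of your argument, which is correct.
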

    
	\subsubsection{Method for defining the descendant fields}
    The method used to define the descendants is strictly the same as in the companion paper~\cite{CH_sym1} so we will only briefly recall the arguments here and refer to~\cite[Subsection 2.3]{CH_sym1} for more details. The starting point is the defining expression~\eqref{eq:desc_reg} of the correlation functions of primary fields with insertion of a descendant field $\Db_{-\lambda}V_\beta(t)$, $\ps{\Db_{-\lambda}V_\beta(t)\V}_{\delta,\eps,\rho}$ . We then use the Gaussian integration by parts formula from Lemma~\ref{lemma:GaussianIPP} to provide an explicit expression for $\ps{\Db_{-\lambda}V_\beta(t)\V}_{\delta,\eps,\rho}$. Based on this expression we can carry algebraic manipulations to express the latter as a sum
    $$
        \ps{\Db_{-\lambda}V_\beta(t)\V}_{\delta,\eps,\rho} = (P)\text{-class terms} + \tilde{\mathfrak{D}}_{-\lambda;\delta,\eps,\rho}(\bm{\alpha})
    $$
    where $\tilde{\mathfrak{D}}_{-\lambda;\delta,\eps,\rho}(\bm{\alpha})$ is an explicit remainder term, while $(P)$-class terms refer to quantities $F_{\delta,\eps,\rho}(\bm\alpha)$ such that~\cite[Definition 2.6]{CH_sym1}:
    \begin{enumerate}
			\item for any $\bm\alpha\in\mc A_{N,M}$, the limit $F(\bm\alpha)\coloneqq\lim\limits_{\delta,\eps,\rho\to0}F_{\delta,\eps,\rho}(\bm\alpha)$ exists and is finite as $\rho$, $\eps$ and then $\delta\to0$;
			\item the map $\bm\alpha\mapsto F(\bm\alpha)$ is analytic in a complex neighborhood of $\mc A_{N,M}$.
		\end{enumerate}
    We then define the correlation function $\ps{\Db_{-\lambda}V_\beta(t)\V}$ as the limit as $\rho$, $\eps$ and then $\delta\to 0$ of the quantity
    \begin{equation}\label{eq:defining_limit}
        \ps{\Db_{-\lambda}V_\beta(t)\V}\coloneqq\ps{\Db_{-\lambda}V_\beta(t)\V}_{\delta,\eps,\rho} - \tilde{\mathfrak{D}}_{-\lambda;\delta,\eps,\rho}(\bm{\alpha})
    \end{equation}
    which is well-defined and analytic in a neighborhood of the parameter set. 
    
    The most involved part is by far to find the expression of the remainder term $\tilde{\mathfrak{D}}_{-\lambda;\delta,\eps,\rho}(\bm{\alpha})$, that is to identify the singular terms that appear in the expansion of $\ps{\Db_{-\lambda}V_\beta(t)\V}_{\delta,\eps,\rho}$. 
    To make this explicit we introduce the notations, for $r>0$ independent of $\rho,\eps,\delta$ but taken small enough,
	\begin{equation*}\label{eq:def_It}
		\begin{split}
			&\It F(t;\cdot)\coloneqq \sum_{i=1,2}\int_{\R_\eps}F^{i}(t;s)\mu_i(ds)+\mu_{B,i}\int_{\Heps}\left(F^i(t;x)+F^i(t;\bar x)\right)d^2x,\\
			&\Is F(t;\cdot)\coloneqq \sum_{i=1,2}\int_{\R_\eps^1}F^{i}(t;s)\mu_i(ds)+\mu_{B,i}\int_{\Heps^1}\left(F^i(t;x)+F^i(t;\bar x)\right)d^2x\qt{and}\\
			&\Ir F(t;\cdot)\coloneqq \sum_{i=1,2}\int_{\R_\eps^c}F^{i}(t;s)\mu_i(ds)+\mu_{B,i}\int_{\Heps^c}\left(F^i(t;x)+F^i(t;\bar x)\right)d^2x
		\end{split}
	\end{equation*}
	with $\Reps^1\coloneqq\Reps\cap B(t,r)$, $\Heps^1\coloneqq \Heps\cap B(t,r)$ and $\Heps^c\coloneqq\Heps\setminus\Heps^1$. 
	In the above $F(t;\cdot)$ indicates a pair of functions $(F^1,F^2)$, which are the ones coming from the Gaussian integration by parts formula from Lemma~\ref{lemma:GaussianIPP}. Iteration of this formula naturally leads to the generalization of the above for tuples of functions $(F^{i_1,\cdots,i_p}(t;x_1,\cdots,x_p))_{\bm i\in\{1,2\}^p}$:
	\begin{equation*}
		\begin{split}
			\Is^p F(t;\cdot)\coloneqq \sum_{\bm i \in\{1,2\}^p}\sum_{\substack{k_1,k_2,k_3\geq0\\ k_1+k_2+k_3=p}}&\int_{\mc R_{k_1}\times\mc H_{k_2,k_3}}\prod_{l=1}^{k_1}\mu_{i_l}(ds_l)\prod_{l=k_1+1}^{p} \mu_{B,i_l}d^2x_l\\
			&F^{\bm i}(t;s_1,\cdots,s_{k_1},x_{k_1+1}\cdots,x_{k_1+k_2},\bar x_{k_1+k_2+1},\cdots,\bar x_p)
		\end{split}
	\end{equation*}
    with $\mc R_{k_1}=\left(\Reps\cap B(t,r_1)\right)^{k_1}$ and similarly for $\mc H_{k_2+k_3}$.
	More generally we will consider quantities of the form 
	\[
	\It^{p_1}\times\Is^{p_2}\times\Ir^{p_3}F(t;\cdot)    
	\]
	for an integrand similar to the previous one but where now the first $p_1$ integrals are evaluated over either $\Reps$ or $\Heps$, the following $p_2$ ones are taken around the insertion $t$ and the last $p_3$ ones take values away from $t$.
    We will also extensively use the notation, for indices $i_k\in\{1,2\}$ and distinct insertions $x_k\in \H\cup\R$,
    \begin{equation}\label{eq:def_Psi}
        \Psi_{i_1,\cdots,i_l}(x_1,\cdots,x_l)\coloneqq \ps{V_{\gamma e_{i_1}(x_1)}\cdots V_{\gamma e_{i_l}(x_l)}V_\beta(t)\V}_{\rho,\eps,\delta}.
    \end{equation}

    \subsubsection{The $\L_{-n}V_\beta$ and $\Wb_{-n}V_\beta$ descendant fields}
    Let us now recall how this method was implemented in~\cite{CH_sym1} in order to make sense of the $\L_{-n}V_\beta$ and $\Wb_{-n}V_\beta$ descendant fields. To start with using the Gaussian integration by parts formula we have
    \begin{align*}
		&\ps{\L_{-n}V_\beta(t)\prod_{k=1}^NV_{\alpha_k}(z_k)\prod_{l=1}^MV_{\beta_l}(s_l)}_{\delta,\eps}=\\
		&\left[\sum_{k=1}^{2N+M} \frac{(n-1)\Delta_{\alpha_k}}{2(z_k-t)^{n}}-\sum_{k=1}^{2N+M} \left(\frac{\ps{\beta,\alpha_k}}{2(z_k-t)^{n}}+ \sum_{p=1}^{n-1}\sum_{l\neq k}\frac{\ps{\alpha_k,\alpha_l}}{4(z_k-t)^p(z_l-t)^{n-p}}\right)\right]\ps{V_\beta(t)\V}_{\delta,\eps} \\
        &- \It \left[\left(\frac{n-1+\frac{\ps{\beta,\gamma e_i}}2}{(x-t)^{n}}-\sum_{p=1}^{n-1}\sum_{k=1}^{2N+M}\frac{\ps{\alpha_k,\gamma e_i}}{2(x-t)^p(z_k-t)^{n-p}}\right)\Psi_i(x)\right]\\
		&+ \sum_{i=1}^2\mu_{B,i}\int_{\Heps} \sum_{p=1}^{n-1}\frac{\ps{\gamma e_i,\gamma e_i}}{2(x-t)^p(\bar x-t)^{n-p}}\Psi_i(x)d^2x-\It^2\left[\sum_{p=1}^{n-1}\frac{\ps{\gamma e_i,\gamma e_j}}{4(x-t)^p(y-t)^{n-p}}\Psi_{i,j}(x,y)\right].
	\end{align*}
    Using algebraic manipulations~\cite[Equation (3.4)]{CH_sym1} the latter can be transformed into
    \[
        \ps{\L_{-n}V_\beta(t)\V}_{\delta,\eps,\rho}=A_{\delta,\eps,\rho}+\mathfrak{L}_{-n;\delta,\eps,\rho}(\bm{\alpha})
    \]
    where $A_{\delta,\eps,\rho}$ is a $(P)$-class term explicitly given by
    \begin{equation}\label{eq:expr_Ln}
        \begin{split}
            &A_{\delta,\eps}\coloneqq\left(\sum_{k=1}^{2N+M}\frac{\ps{(n-1)Q+\beta,\alpha_k}}{2(z_k-t)^n}-\sum_{k,l=1}^{2N+M}\sum_{p=1}^{n-1}\frac{\ps{\alpha_k,\alpha_l}}{4(z_k-t)^p(z_l-t)^{n-p}}\right)\ps{V_\beta(t)\V}_{\delta,\eps}\\
            &-\Ir\left[\left(\frac{n-1+\frac{\ps{\beta,\gamma e_i}}2}{(x-t)^n}-\sum_{k=1}^{2N+M}\sum_{p=1}^{n-1}\frac{\ps{\gamma e_i,\alpha_k}}{2(x-t)^p(z_k-t)^{n-p}}\right)\Psi_i(x)\right]\\
            &-\Is\left[\sum_{k=1}^{2N+M}\frac{\ps{\gamma e_i,\alpha_k}}{2(z_k-t)^{n-1}(z_k-x)}\Psi_i(x)\right]\\
            &+\Ir\times\Ir\left[-\sum_{p=1}^{n-1}\frac{\ps{\gamma e_i,\gamma e_j}}{4(x-t)^{p}(y-t)^{n-p}}\Psi_{i,j}(x,y)\right]\\
            &+\Is\times\Ir\left[\frac{\ps{\gamma e_i,\gamma e_j}}{2(y-t)^{n-1}(y-x)}\Psi_{i,j}(x,y)\right]
        \end{split}
     \end{equation}
    while $\mathfrak{L}_{-n;\delta,\eps,\rho}(\bm{\alpha})$ is a remainder term that takes the form
    \begin{equation}\label{eq:rem_Ln}
        \mathfrak{L}_{-n;\delta,\eps,\rho}(\bm{\alpha})\coloneqq \Is\left[\partial_x\left(\frac{1}{(x-t)^{n-1}} \ps{V_{\gamma e_i}(x)V_\beta(t)\V}_{\delta,\eps,\rho} \right)\right].
    \end{equation}

    Likewise we can write a similar expansion for the $\Wb_{-n}V_\beta$ descendant. It takes the form of the equality
    \[
        \ps{\Wb_{-n}V_\beta(t)\V}_{\delta,\eps,\rho}=A_{\delta,\eps,\rho}+\mathfrak{W}_{-n;\delta,\eps,\rho}(\bm{\alpha})
    \]
    where $A_{\delta,\eps,\rho}$ is a $(P)$-class term that is equal~\cite[Lemma 3.1]{CH_sym1}, for $n=1$, to
    \begin{equation}\label{eq:expr_W1}
            \begin{split}
                &\sum_{k=1}^N\frac{\Wb_{-1}^\beta(\alpha_k)}{2(z_k-t)}\ps{V_\beta(t)\V}_{\delta,\eps}-\Ir\left[\frac{\Wb_{-1}^\beta(\gamma e_i)}{2(x-t)}\ps{V_{\gamma e_i}(x)V_\beta(t)\V}_{\delta,\eps}\right]\\
                &+\Is\left[h_2(e_i)\left(2\omega_{\hat i}(\beta)-q\right)\sum_{k=1}^{2N+M}\frac{\ps{\alpha_k,\gamma e_i}}{2(x-z_k)}\ps{V_{\gamma e_i}(x)V_\beta(t)\V}_{\delta,\eps}\right]\\
                &-\Is\times\It\left[h_2(e_i)\left(2\omega_{\hat i}(\beta)-q\right)\frac{\ps{\gamma e_i,\gamma e_j}}{2(x-y)}\ps{V_{\gamma e_i}(x)V_{\gamma e_j}(y)V_\beta(t)\V}_{\delta,\eps}\right].
            \end{split}
        \end{equation}
    while $\mathfrak{W}_{-n;\delta,\eps,\rho}(\bm{\alpha})$ is a remainder term that takes the form
    \begin{eqs}\label{eq:rem_Wn}
        \mathfrak{W}^i_{-n,\delta,\eps,\rho}(\bm\alpha)&\coloneqq\Is\left[\partial_x\left(h_2(e_i)\left(\frac{(n-2)q+2\omega_{\hat{i}}(\beta)}{(x-t)^{n-1}}+\sum_{k=1}^{2N+M}\frac{2\omega_{\hat{i}}(\alpha_k)\mathds1_{n\geq 2}}{(x-t)^{n-2}(x-z_k)}\right)\Psi_{i}(x)\right)\right]\\
			&+\Is\times \It\left[\partial_x\left(\frac{2\gamma h_2(e_i)\delta_{i\neq j}\mathds1_{n\geq 2}}{(x-t)^{n-2}(y-x)}\Psi_{i,j}(x,y)\right)\right].
    \end{eqs}
    To discard the divergent quantities in these remainder terms we then use Stokes' formula as explained in \cite[Subsection 2.3.3]{CH_sym1}, and rely on fusion estimates (Lemma~\ref{lemma:fusion}) to prove that the defining limit \eqref{eq:defining_limit} is well-defined and analytic in a complex neighborhood of $\mc{A}_{N,M}$. For example the $\L_{-n}V_\beta$ descendant is defined by the limit~\cite[Definition 3.6]{CH_sym1}
    \begin{equation}\label{eq:def_descLn}
			\begin{split}
				&\ps{\L_{-n}V_\beta(t)\V}\coloneqq\lim\limits_{\delta,\eps,\rho\to0}\ps{\L_{-n}V_\beta(t)\V}_{\delta,\eps,\rho}-\sum_{i=1}^2\tilde{\mathfrak{L}}^i_{-n,\delta,\eps,\rho}(\bm\alpha)\qt{where}
			\end{split}
		\end{equation}
    \begin{equation}\label{eq:tilde_Ln}
		\begin{split}
			&\tilde{\mathfrak{L}}^i_{-n,\delta,\eps,\rho}(\bm\alpha)=\left(\mu_{L,i}\frac{\ps{V_{\gamma e_i}(t-\eps)V_\beta(t)\V}_{\delta,\eps}}{(-\eps)^{n-1}}-\mu_{R,i}\frac{\ps{V_{\gamma e_i}(t+\eps)V_\beta(t)\V}_{\delta,\eps}}{(\eps)^{n-1}}\right)\\
			&-\mu_{B,i}\int_{(t-r,t+r)}\ps{V_{\gamma e_i}(x+i\delta)V_\beta(t)\V}_{\delta,\eps}\Im\left(\frac{1}{(x+i\delta-t)^{n-1}}\right)dx.
		\end{split}
	\end{equation}
    The $\Wb_{-n}V_\beta$ descendant is defined by a similar limit by substracting the corresponding remainder term~\cite[Definition 3.10]{CH_sym1}.
    
    \subsection{Method for defining the singular and null vectors}\label{subsec:def_descendants}
    Based on the above definition of the descendant fields we can then look for singular vectors. Heuristically, a singular vector is a non-trivial linear combination of the form
    \[
        \sum_{\norm{\nu}=n} c_{\nu}(\alpha) \L_{-\nu}V_\alpha + \sum_{\norm{\lambda}=n} d_{\lambda}(\alpha) \Wb_{-\lambda}V_\alpha
    \]
    such that the associated multilinear form over $\R^r$
    \[
        \sum_{\norm{\nu}=n} c_{\nu}(\alpha) \L_{-\nu}^\alpha + \sum_{\norm{\lambda}=n} d_{\lambda}(\alpha) \Wb_{-\lambda}^\alpha
    \]
    vanishes identically. This happens for very specific values of the weight $\alpha$, predicted by the representation theory of the $W_3$ algebra. When inserted within correlation functions such singular vectors then give rise to \textit{higher equations of motion}, and under additional assumptions to BPZ-type differential equations. Let us explain in more details how we prove such claims.  

    \subsubsection{Definition of the singular vectors}
    The existence of singular vectors is predicted by the algebra and more precisely representation theory, but their explicit form is not known in general. Here we will use explicit expressions to construct singular vectors at the levels $n=1$, $2$ and $3$ and that take the form
    \[
        \psi_{n,\alpha}\coloneqq \Wb_{-n}V_\alpha+\sum_{\norm{\nu}=n} c_{\nu}(\alpha) \L_{-\nu}V_\alpha
    \]
    where the $c_\nu(\alpha)$ are explicit, and where the weight $\alpha$ has to be specifically chosen. More precisely we define the singular vector within correlation functions by setting
    \begin{equation*}
        \ps{\psi_{n,\alpha}(t)\V}\coloneqq \ps{\Wb_{-n}V_\alpha(t)\V}+\sum_{\norm{\nu}=n} c_{\nu}(\alpha) \ps{\L_{-\nu}V_\alpha(t)\V},
    \end{equation*}
    where the quantities on the right-hand side are defined using the above regularization procedure.

    \subsubsection{Explicit expression of the singular vector}
    For one of the singular vector considered above, we know that the associated multilinear form 
    \[
        \Wb_{-n}^\alpha+\sum_{\norm{\nu}=n} c_{\nu}(\alpha) \L_{-\nu}^\alpha
    \]
    vanishes identically, so it is tempting to write that $\psi_{n,\alpha}=0$, or put differently that $\psi_{n,\alpha}$ is a \textit{null vector}. This is however generically \textit{false} since the descendants have been defined from distinct regularization procedures. We can however deduce from this fact that the singular vector is described using these remainder terms: 
    \[
        \psi_{n,\alpha}=-\lim\limits_{\delta,\eps,\rho\to0}\left(\tilde{\mathfrak{W}}_{-n;\delta,\eps,\rho}(\bm\alpha)+\sum_{\norm{\nu}=n} c_{\nu}(\alpha)\tilde{\mathfrak{L}}_{-\nu;\delta,\eps,\rho}(\bm\alpha)\right)
    \]
    where in the above we have considered the remainder terms coming from the definition of the different descendants involved.

    It is \textit{a priori} far from obvious that these remainder terms admit well-defined limit. And in fact individually these terms are even divergent! There are however (truly miraculous!) cancellations occurring and in the end we get that the latter converges, and that the corresponding limit is not zero but given by another correlation function, that is defined from the first one but with the weight associated to $V_\alpha(t)$ being changed. In addition it can feature some descendant fields. Put differently this means that the singular vector is actually not a \textit{null vector} but is proportional to a primary or a descendant field.

	\section{Singular vectors and higher equations of motion: levels one and two}\label{sec:sing}
	In \cite{CH_sym1} we have defined descendant fields of the Vertex Operator $V_\beta$ of the form $\L_{-n}V_\beta$ or $\Wb_{-n}V_\beta$ for $n\geq 1$. We proved that thanks to these definitions we were able to express the symmetries of the models into actual constraints on the correlation functions in the form of Ward identities. 
	
	In this section we explain how the existence of singular vectors can give rise to additional constraints on Toda correlation functions. To do so we will start by describing singular vectors at the levels one and two, in which case we will conduct the computations in more details. In order to define the singular vector at level two we will also need to introduce beforehand the descendant field $\L_{-1,1}V_\beta$.
	
	
	\subsection{Singular vector at the level one}
	Singular vectors arise when the weight $\beta$ takes specific values. For instance singular vectors at the level one appear when considering \textit{semi-degenerate fields}, that is Vertex Operators of the form $V_\beta$ where $\beta=\kappa\omega_i$ for $\kappa\in\R$ and $i\in\{1,2\}$. Based on this semi-degenerate field we can formally define the singular vector at level one by setting
	\begin{equation*}
		\psi_{-1,\beta}\coloneqq \left(\Wb_{-1}-\frac{3w(\beta)}{2\Delta_\beta}\L_{-1}\right)V_\beta.
	\end{equation*}
	To be more specific we define it as follows.
	\begin{defi}
		Take $t\in\R$ and $(\beta,\bm\alpha)\in\mc A_{N,M+1}$.  We define the singular vector $\Psi_{-1,\beta}$ within half-plane correlation functions by setting
		\begin{equation*}
			\ps{\psi_{-1,\beta}(t)\V}\coloneqq\ps{\Wb_{-1}V_\beta(t)\V}-\frac{3w(\beta)}{2\Delta_\beta}\ps{\L_{-1}V_\beta(t)\V}.
		\end{equation*}
	\end{defi}
	
	This singular vector is actually not a \textit{null} vector, that is unlike in the free-field theory or bulk Toda CFT where the above correlation functions identically vanish (see~\cite[Proposition 5.1]{Toda_OPEWV}), in boundary Toda CFT this singular vector is not zero but rather given by a primary field. Namely we have the following statement:
	\begin{theorem}\label{thm:deg1}
		Assume that $\beta=\kappa\omega_1$ for $\kappa<q$. Then 
		\begin{equation*}
			\psi_{-1,\beta}=2(q-\kappa)(\mu_{L,2}-\mu_{R,2})V_{\beta+\gamma e_2}    
		\end{equation*}
		in the sense that for any $(\beta,\bm\alpha)\in\mc A_{N,M+1}$
		\begin{equation*}
			\begin{split}
				&\ps{\psi_{-1,\beta}(t)\V}=2(q-\kappa)(\mu_{L,2}-\mu_{R,2})\ps{V_{\beta+\gamma e_2}(t)\V}.
			\end{split}
		\end{equation*} 
	\end{theorem}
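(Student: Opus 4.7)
The approach exploits the \emph{free-field null vector identity}: for $\beta = \kappa\omega_1$, the multilinear form $\Wb^\beta_{-1}[\Phi] - \frac{3w(\beta)}{2\Delta_\beta}\L^\beta_{-1}[\Phi]$ vanishes identically as a functional of $\partial\Phi(t)$ (this is Zamolodchikov's semi-degenerate null vector at level one). Since the regularized descendants in~\eqref{eq:desc_reg} are built directly from these multilinear forms, the regularized correlators satisfy the exact equality $\ps{\Wb_{-1}V_\beta(t)\V}_{\delta,\eps,\rho} = \frac{3w(\beta)}{2\Delta_\beta}\ps{\L_{-1}V_\beta(t)\V}_{\delta,\eps,\rho}$ before any renormalization. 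Inserting this into the defining limit~\eqref{eq:defining_limit} for each descendant, the bulk regularized correlators cancel and the singular vector is entirely carried by the renormalization counter-terms:
\[
\ps{\psi_{-1,\beta}(t)\V} = \lim_{\delta,\eps,\rho\to 0}\sum_{i=1,2}\left(\frac{3w(\beta)}{2\Delta_\beta}\tilde{\mathfrak{L}}^{i}_{-1,\delta,\eps,\rho}(\bm\alpha) - \tilde{\mathfrak{W}}^{i}_{-1,\delta,\eps,\rho}(\bm\alpha)\right).
\]

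\textbf{Explicit counter-terms and channel analysis.} For $n=1$, the bulk integral in~\eqref{eq:tilde_Ln} vanishes since $\Im(1)=0$, leaving only the two boundary contributions $\tilde{\mathfrak{L}}^{i}_{-1,\delta,\eps,\rho} = \mu_{L,i}\Psi_i(t-\eps) - \mu_{R,i}\Psi_i(t+\eps)$. Similarly, at $n=1$ the remainder~\eqref{eq:rem_Wn} collapses (the $\mathds{1}_{n\ge 2}$ terms drop out) to $h_2(e_i)(2\omega_{\hat{i}}(\beta) - q)\,\Is[\partial_x\Psi_i(x)]$, and applying Stokes' formula as in~\cite[Subsection 2.3.3]{CH_sym1} reduces this to the same two boundary insertions at $t\pm\eps$ (the bulk semicircle vanishes because the analogous ``power of $(x-t)^{0}$'' has no imaginary part, and the $t\pm r$ contributions are $(P)$-class). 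Thus, modulo $(P)$-class remainders,
\[
\tilde{\mathfrak{W}}^{i}_{-1,\delta,\eps,\rho} = h_2(e_i)\big(2\omega_{\hat{i}}(\beta) - q\big)\big[\mu_{L,i}\Psi_i(t-\eps) - \mu_{R,i}\Psi_i(t+\eps)\big].
\]
The proof then splits according to $i$. For $i=1$ the OPE $V_{\gamma e_1}(t\pm\eps)V_{\kappa\omega_1}(t)$ is singular in $\eps$ (exponent $\eps^{-\gamma\kappa/2}$) and both counter-terms individually diverge; but a direct algebraic check using $h_2(e_1)=-1$ and $\omega_{\hat{1}}(\beta) = \omega_2(\beta) = \kappa/3$ gives $h_2(e_1)(2\omega_{\hat{1}}(\beta)-q) = q - \tfrac{2\kappa}{3} = \frac{3w(\beta)}{2\Delta_\beta}$, so the divergent combination $\frac{3w}{2\Delta}\tilde{\mathfrak{L}}^{1}_{-1} - \tilde{\mathfrak{W}}^{1}_{-1}$ cancels identically before taking limits --- this is nothing but the null vector identity in disguise. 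For $i=2$ one has $\ps{\gamma e_2, \kappa\omega_1} = 0$, so the fusion is regular and the fusion lemma (Lemma~\ref{lemma:fusion}) yields $\lim_{\eps\to 0}\Psi_2(t\pm\eps) = \ps{V_{\beta+\gamma e_2}(t)\V}$, the assumption $\kappa<q$ being precisely what excludes additional reflection contributions in this limit.

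\textbf{Assembling and main obstacle.} Only the $i=2$ channel contributes, with coefficient
\[
\frac{3w(\beta)}{2\Delta_\beta} - h_2(e_2)\big(2\omega_{\hat{2}}(\beta) - q\big) = \left(q-\tfrac{2\kappa}{3}\right) - \left(\tfrac{4\kappa}{3} - q\right) = 2(q-\kappa),
\]
where one uses $\Delta_\beta = \kappa(q-\kappa/3)$, the explicit formula $w(\beta) = \tfrac{2}{3}\kappa(q-\kappa/3)(q-2\kappa/3)$ at $\beta=\kappa\omega_1$, $h_2(e_2)=1$ and $\omega_{\hat{2}}(\beta) = \omega_1(\beta) = \tfrac{2\kappa}{3}$. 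This yields the claimed identity. The main obstacle is a rigorous treatment of the $i=1$ cancellation: while the leading singular-in-$\eps$ contributions are forced to cancel by the null vector identity, one must show that the subleading $(P)$-class remainders (bulk semicircle, $t\pm r$ boundary terms in Stokes, and analytic continuation in $\bm\alpha$) also combine to a controlled limit. This relies on uniform fusion estimates (Lemma~\ref{lemma:fusion}) and the analyticity of the regularized counter-terms, in the same way as for the definitions of the descendants themselves in~\cite{CH_sym1}.
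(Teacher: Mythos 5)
Your proposal follows essentially the same route as the paper: it invokes the identity $\Wb_{-1}^\beta=\frac{3w(\beta)}{2\Delta_\beta}\L_{-1}^\beta$ at the level of the regularized correlators, reduces $\ps{\psi_{-1,\beta}(t)\V}$ to the limit of the counter-terms $\frac{3w(\beta)}{2\Delta_\beta}\tilde{\mathfrak L}^i_{-1}-\tilde{\mathfrak W}^i_{-1}$, observes the exact cancellation in the $i=1$ channel and evaluates the $i=2$ channel via fusion with $\ps{\beta,\gamma e_2}=0$, arriving at the same coefficient $2(q-\kappa)$. The only cosmetic difference is that you re-derive the $n=1$ counter-terms and flag the $i=1$ cancellation as a potential obstacle, whereas the proportionality $\tilde{\mathfrak W}^1_{-1}=\frac{3w(\beta)}{2\Delta_\beta}\tilde{\mathfrak L}^1_{-1}$ makes that cancellation exact, so no further control is needed there.
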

	\begin{proof}
		To start with note that $\frac{3w(\beta)}{2\Delta_\beta}=\left(q-2\omega_{2}(\beta)\right)=(q-\frac{2\kappa}3)$ for such a $\beta=\kappa\omega_1$. Besides explicit computations show that for such a $\beta$, we have the equality $\frac{3w(\beta)}{2\Delta_\beta}\L_{-1}^\beta=\Wb_{-1}^\beta$ between linear maps over $\C^2$ (see \textit{e.g.}~\cite[Subsection 5.1]{Toda_OPEWV}), so that for any positive $\delta,\eps,\rho$
		\begin{align*}
			\frac{3w(\beta)}{2\Delta_\beta}\ps{\L_{-1}V_\beta(t)\V}_{\delta,\eps,\rho}=\ps{\Wb_{-1}V_\beta(t)\V}_{\delta,\eps,\rho}.
		\end{align*}
		As a consequence and by definition of the descendants from Equation~\eqref{eq:def_descLn} 
		\[
		\ps{\psi_{-1,\beta}(t)\V}=\lim\limits_{\delta,\eps,\rho\to0}\frac{3\omega(\beta)}{2\Delta_\beta}\tilde{\mathfrak L}_{-1}(\delta,\eps,\rho;\bm\alpha)-\tilde{\mathfrak W}_{-1}(\delta,\eps,\rho;\bm\alpha).
		\]
		Now, as explained in \cite{CH_sym1} and reminded in Subsection~\ref{subsec:def_descendants}, from the explicit expressions~\eqref{eq:rem_Ln} and~\eqref{eq:rem_Wn} we infer that
        \begin{equation*}
		\begin{split}
            &\tilde{\mathfrak{L}}_{-1}(\delta,\eps,\rho;\bm\alpha)=\sum_{i=1}^2\tilde{\mathfrak{L}}_{-1}^i(\delta,\eps,\rho;\bm\alpha)\qt{and}\tilde{\mathfrak{W}}_{-1}(\delta,\eps,\rho;\bm\alpha)=\sum_{i=1}^2\tilde{\mathfrak{W}}_{-1}^i(\delta,\eps,\rho;\bm\alpha),\text{ where}\\
			&\tilde{\mathfrak{L}}^i_{-1}(\delta,\eps,\rho;\bm\alpha)=\left(\mu_{L,i}\ps{V_{\gamma e_i}(t-\eps)V_\beta(t)\V}_{\delta,\eps}-\mu_{R,i}\ps{V_{\gamma e_i}(t+\eps)V_\beta(t)\V}_{\delta,\eps}\right)\qt{while}\\
            &\tilde{\mathfrak{W}}_{-1}^i(\delta,\eps,\rho;\bm\alpha)\coloneqq -h_2(e_i)\left(q-2\omega_{\hat i}(\beta)\right)\tilde{\mathfrak{L}}^i_{-1}(\delta,\eps,\rho;\bm\alpha).
		\end{split}
	\end{equation*}
        Since we have $\frac{3w(\beta)}{2\Delta_\beta}=q-\frac{2\kappa}{3}$ for $\beta=\kappa\omega_1$ we see that the remainder terms are such that:
		\begin{equation*}
			\begin{split}
				\tilde{\mathfrak W}_{-1}^1(\delta,\eps,\rho;\bm\alpha)&=\frac{3w(\beta)}{2\Delta_\beta}\tilde{\mathfrak L}_{-1}^1(\delta,\eps,\rho;\bm\alpha)\quad\text{while}\\
				\tilde{\mathfrak W}_{-1}^2(\delta,\eps,\rho;\bm\alpha)&=-\left(q-2\omega_{1}(\beta)\right)\tilde{\mathfrak L}_{-1}^2(\delta,\eps,\rho;\bm\alpha).
			\end{split}
		\end{equation*}
		where the exponents indicate whether we pick $i=1$ or $i=2$. Now since $\ps{\beta,\gamma e_2}=0$ and thanks to the probabilistic representation of the correlation functions the above remainder term is seen to be given by
		\[
		\tilde{\mathfrak L}_{-1}^2(\delta,\eps,\rho;\bm\alpha)=(\mu_{L,2}-\mu_{R,2})\ps{V_{\beta+\gamma e_2}(t)\V}+o(1).
		\]
		As a consequence we can conclude that
		\begin{align*}
			\ps{\psi_{-1,\beta}(t)\V}&=\left(\frac{3\omega(\beta)}{2\Delta_\beta}+q-\frac{4\kappa}{3}\right)(\mu_{L,2}-\mu_{R,2})\ps{V_{\beta+\gamma e_2}(t)\V}\\
			&=2\left(q-\kappa\right)(\mu_{L,2}-\mu_{R,2})\ps{V_{\beta+\gamma e_2}(t)\V}.
		\end{align*}	
		The result follows from the definition of the descendants~\eqref{eq:def_descLn}.
	\end{proof}
	
	\subsection{Descendants and singular vectors at the level two}
    In this subsection we derive the explicit expression of the singular vector at the level two $\psi_{-2,\chi}$ introduced in Equation~\eqref{eq:sings_intro}. And to begin with we first need to define the descendant field $\Lc_{-(1,1)}V_\beta$ that appears in the expression of this singular vector. Based on this definition together with the ones already provided for $\Lc_{-2}V_\beta$ and $\Wb_{-2}V_\beta$ we will see that when the weight $\beta$ is given by $\beta=-\chi\omega_1$, $\chi\in\{\gamma,\frac2\gamma\}$, the associated fully-degenerate field $V_\beta$ allows to define one additional singular vector at the level two. 
	
	\subsubsection{A first take on the descendant at the second level}
	As a first step we define here the descendant field $\L_{-1,1}V_\beta$, where recall that the latter is defined by considering expressions of the form
	\begin{equation*}
		\begin{split}
			&\L_{-1,1}V_\beta\coloneqq\left(\L_{-1,1}^\beta(\partial^2\Phi)+\L_{-1,1}^\beta(\partial\Phi,\partial\Phi)\right)V_\beta,\\
			&\L_{-1,1}^\beta(u)\coloneqq\ps{\beta,u}\qt{and}\L_{-1,1}^\beta(u,v)\coloneqq\ps{\beta,u}\ps{\beta,v}.
		\end{split}
	\end{equation*}
    The proof is rather heavy and relies on the same method as in~\cite[Section 3]{CH_sym1}: as a consequence we will only provide a few details on the computations conducted and refer to~\cite{CH_sym1} for more explanations on the method developed here.
	\begin{lemma}\label{lemma:desc11_L}
		Define the remainder term
		\begin{equation*}
			\begin{split}
				&\mathfrak L_{-(1,1);\delta,\eps,\rho}(\bm\alpha)\coloneqq \Is\left[\partial_x\ps{V_{\gamma e_i}(x)\wick{\c\L^{2\beta+\gamma e_i}_{-1}\c V_\beta(t)}\V}_{\delta,\eps}\right]\\
				&-\Is\left[\left(\partial_x+\partial_{\bar x}\mathds1_{x\in\Heps}\right)\left(\frac{\ps{\beta,\gamma e_i}}{2(t-x)}\Psi_i(x)\right)\right]\\
				&+\Is\times\Is\left[\partial_x\left(\frac{\ps{\beta,\gamma e_j}}{2(t-y)}\Psi_{i,j}(x,y)\right)\right]
			\end{split}
		\end{equation*}
		where we have introduced the notation
		\begin{equation*}
			\begin{split}
				&\ps{V_{\gamma e_i}(x)\wick{\c\L^{2\beta+\gamma e_i}_{-1}\c V_\beta(t)}\V}_{\delta,\eps}\coloneqq \sum_{k=1}^{2N+M}\frac{\ps{2\beta+\gamma e_i,\alpha_k}}{2(z_k-t)}\Psi_{i}(x)\\
				&-\Ir\left[\frac{\ps{2\beta+\gamma e_i,\gamma e_j}}{2(y-t)}\Psi_{i,j}(x,y)\right]+\Is\left[\sum_{k=1}^{2N+M}\frac{\ps{\alpha_k,\gamma e_j}}{2(y-z_k)}\Psi_{i,j}(x,y)\right]\\
				&+\Is\times\Ir\left[\frac{\ps{\gamma e_j,\gamma e_f}}{2(y-z)}\Psi_{i,j,f}(x,y,z)\right].
			\end{split}
		\end{equation*}
		Then the following quantity is $(P)$-class:
		\begin{equation*}
			\ps{\L_{-1,1}V_\beta(t)\V}_{\delta,\eps,\rho}-\mathfrak L_{-(1,1);\delta,\eps,\rho}(\bm\alpha).
		\end{equation*}
	\end{lemma}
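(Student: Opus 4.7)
The strategy would be the same one used in~\cite[Section 3]{CH_sym1} for $\L_{-n}V_\beta$ and reminded in Subsection~\ref{subsec:def_descendants}: start from the regularized expression~\eqref{eq:desc_reg}, apply the Gaussian integration by parts formula of Lemma~\ref{lemma:GaussianIPP} until no derivatives of the Toda field are left, reorganize the output so that the singular behaviour near $t$ is captured by total $\partial_x$-derivatives under singular integrals $\Is$, and finally recognize the resulting boundary-at-$t$ contributions as exactly $\mathfrak L_{-(1,1);\delta,\eps,\rho}(\bm\alpha)$. What remains will be expressible as $\Ir$ and mixed $\Is\times\Ir$-type terms, hence $(P)$-class by the usual fusion estimates (Lemma~\ref{lemma:fusion}).

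First I would apply Lemma~\ref{lemma:GaussianIPP} with $p_1=2$ to $\ps{:\ps{\beta,\partial^2\Phi(t)}V_\beta(t):\V}_{\delta,\eps,\rho}$, which produces the expected $\frac{\ps{\beta,\alpha_k}}{(z_k-t)^2}$ and $\frac{\ps{\beta,\gamma e_i}}{(x-t)^2}$-type contributions. Then I would apply Lemma~\ref{lemma:GaussianIPP} twice to $\ps{:\ps{\beta,\partial\Phi(t)}^2 V_\beta(t):\V}_{\delta,\eps,\rho}$, once on each factor of $\ps{\beta,\partial\Phi(t)}$: the first integration by parts brings out simple poles at the $z_k$, $x$, $\bar x$ multiplying $\ps{:\ps{\beta,\partial\Phi(t)}V_\beta(t):\V}$-type correlators, and the second one turns these into correlators carrying at most two additional $V_{\gamma e_i}$ insertions, multiplied by products of simple poles at $z_k, x, \bar x$. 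Grouping the double poles $\frac{1}{(x-t)^2}$ obtained from the $\partial^2\Phi$ term with the products of simple poles $\frac{1}{(x-t)(z_k-t)}$ and $\frac{1}{(x-t)(y-t)}$ produced by the $(\partial\Phi)^2$ computation, via the elementary identity $\partial_x\frac{1}{x-t}=-\frac{1}{(x-t)^2}$, is what will let one recast all singular behaviour at $t$ as $\Is[\partial_x(\cdots)]$ and $\Is\times\Is[\partial_x(\cdots)]$ expressions.

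The main obstacle I anticipate is the algebraic bookkeeping needed to show that these total-derivative terms reassemble into the proposed remainder $\mathfrak L_{-(1,1);\delta,\eps,\rho}(\bm\alpha)$, and in particular into the composite object $\wick{\c\L^{2\beta+\gamma e_i}_{-1}\c V_\beta(t)}$ with effective weight $2\beta+\gamma e_i$. Heuristically, the coefficient $2$ in front of $\beta$ should reflect the symmetry between the two $\ps{\beta,\partial\Phi(t)}$ factors in $\L_{-(1,1)}V_\beta$---either one can be integrated by parts first, and each choice produces a $\beta$-coupling with the vertex $V_{\gamma e_i}(x)$ appearing in $\Is$---while the $+\gamma e_i$ piece should originate from the Wick contraction of the vertex $V_{\gamma e_i}(x)$ brought in by the first IBP against the remaining $\ps{\beta,\partial\Phi(t)}$ when performing the second IBP. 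Once this identification is carried out, the leftover contributions are either $\Ir$-type or mixed $\Is\times\Ir$-type integrals whose integrands are uniformly controlled in a neighbourhood of $t$; their $\delta,\eps,\rho\to 0$ limit then follows from a dominated convergence argument based on the fusion estimates, and analyticity in a complex neighbourhood of $\mc A_{N,M}$ is inherited from the analyticity of the underlying half-plane Toda correlation functions constructed in~\cite{CH_construction}.
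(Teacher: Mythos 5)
Your proposal follows essentially the same route as the paper's proof: apply the Gaussian integration by parts formula of Lemma~\ref{lemma:GaussianIPP} to both pieces of $\L_{-(1,1)}^\beta$, recast the singularities at $x=t$ as total $\partial_x$-derivatives under $\Is$ and $\Is\times\Is$ integrals (which is exactly how the composite object $\wick{\c\L^{2\beta+\gamma e_i}_{-1}\c V_\beta(t)}$ with effective weight $2\beta+\gamma e_i$ emerges, for the reason you give), and dispose of the remaining $\Ir$- and $\Is\times\Ir$-type terms via the fusion estimates of Lemmas~\ref{lemma:fusion} and~\ref{lemma:fusion_integrability}. The only part you gloss over is the handling of the genuinely two-fold singular $\Is\times\Is$ contributions (a symmetry argument in $x,y$ when $i=j$ and integrability of the $\frac{1}{x-y}$ singularity when $i\neq j$, plus a further integration by parts in $y$), but this falls under the "algebraic bookkeeping" you correctly anticipate.
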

	\begin{proof}
		To start with we use the defining expression~\eqref{eq:desc_reg} together with Lemma~\ref{lemma:GaussianIPP} to get
        \begin{align*}
            &\ps{\L_{-1,1}V_\beta(t)\V}_{\delta,\eps,\rho}=\sum_{k=1}^{2N+M}\frac{\ps{\beta,\alpha_k}}{2(t-z-k)^2)}+\sum_{k,l=1}^{2N+M}\frac{\ps{\beta,\alpha_k}\ps{\beta,\alpha_l}}{4(t-z-k)(t-z_l))}\\
            &-\It\left[\left(\frac{\ps{\beta,\gamma e_i}(2+\ps{\beta,\gamma e_i})}{4(t-x)^2}+2\frac{\ps{\beta,\gamma e_i}\ps{\beta,\gamma e_i}}{4(t-x)(t-\bar x)}\mathds1_{x\in\Heps}+2\sum_{k=1}^{2N+M}\frac{\ps{\beta,\alpha_k}\ps{\beta,\gamma e_i}}{4(t-z_k)(t-x)}\right)\Psi_{i}(x)\right]\\
			&+\It^2\left[\frac{\ps{\beta,\gamma e_i}\ps{\beta,\gamma e_j}}{4(t-x)(t-y)}\Psi_{i,j}(x,y)\right].
        \end{align*}
        After some algebraic manipulations on this expression we see that the only possibly singular terms in the expansion of $\ps{\L_{-1,1}V_\beta(t)\V}_{\delta,\eps,\rho}$ are given by
		\begin{align*}
			\mathfrak I\coloneqq&-\Is\left[\left(\frac{\ps{\beta,\gamma e_i}(2+\ps{\beta,\gamma e_i})}{4(t-x)^2}+2\frac{\ps{\beta,\gamma e_i}\ps{\beta,\gamma e_i}}{4(t-x)(t-\bar x)}\mathds1_{x\in\Heps}+2\sum_{k=1}^{2N+M}\frac{\ps{\beta,\alpha_k}\ps{\beta,\gamma e_i}}{4(t-z_k)(t-x)}\right)\Psi_{i}(x)\right]\\
			&+\Is\times\Ir\left[2\frac{\ps{\beta,\gamma e_i}\ps{\beta,\gamma e_j}}{4(t-x)(t-y)}\Psi_{i,j}(x,y)\right]+\Is^2\left[\frac{\ps{\beta,\gamma e_i}\ps{\beta,\gamma e_j}}{4(t-x)(t-y)}\Psi_{i,j}(x,y)\right].
		\end{align*}
		The one-fold integral can be rewritten using 
		\begin{align*}
			&\left(\partial_x+\partial_{\bar x}\mathds1_{x\in\Heps}\right)\left[\left(\frac{\ps{\beta,\gamma e_i}}{2(t-x)}+\sum_{k=1}^{2N+M}\frac{\ps{2\beta+\gamma e_i,\alpha_k}}{2(t-z_k)}\right)\Psi_{i}(x)\right]=\qt{$(P)$-class terms}\\
			&+\left(\frac{\ps{\beta,\gamma e_i}(2+\ps{\beta,\gamma e_i})}{4(t-x)^2}+\frac{\ps{\beta,\gamma e_i}\ps{\beta,\gamma e_i}}{2(t-x)(t-\bar x)}\mathds1_{x\in\Heps}\right)\Psi_{i}(x)\\
			&+\sum_{k=1}^{2N+M}\left(\frac{\ps{\beta,\alpha_k}\ps{\beta,\gamma e_i}}{2(t-z_k)(t-x)}+\frac{\ps{\beta,\alpha_k}\ps{\beta,\gamma e_i}}{2(t-z_k)(t-\bar x)}\mathds 1_{x\in\Heps}\right)\Psi_{i}(x)\\
			&-\It\left[\left(\frac{\ps{\beta,\gamma e_i}}{2(t-x)}+\sum_{k=1}^{2N+M}\frac{\ps{2\beta+\gamma e_i,\alpha_k}}{2(t-z_k)}\right)\left(\frac{\ps{\gamma e_,\gamma e_j}}{2(y-x)}+\frac{\ps{\gamma e_,\gamma e_j}}{2(y-\bar x)}\mathds1_{x\in\Heps}\right)\Psi_{i,j}(x,y)\right].
		\end{align*}
		Since the integral $\Is\times\Ir\left[\frac1{x-y}\Psi_{i,j}(x,y)\right]$ is $(P)$-class in virtue of Lemma~\ref{lemma:fusion_integrability} this entails
		\begin{align*}
			\mathfrak I&=\qt{$(P)$-class terms}\\
			&-\Is\left[\left(\partial_x+\partial_{\bar x}\mathds1_{x\in\Heps}\right)\left(\frac{\ps{\beta,\gamma e_i}}{2(t-x)}\Psi_i(x)\right)\right]-\Is\left[\partial_x\left(\sum_{k=1}^{2N+M}\frac{\ps{2\beta+\gamma e_i,\alpha_k}}{2(t-z_k)}\Psi_{i}(x)\right)\right]\\
			&+\Is\times\Ir\left[\left(\frac{\ps{\beta,\gamma e_i}\ps{\beta,\gamma e_j}}{2(t-x)(t-y)}+\frac{\ps{\beta,\gamma e_i}\ps{\gamma e_,\gamma e_j}}{4(t-x)(x-y)}\right)\Psi_{i,j}(x,y)\right]\\
			&+\Is\times\Is\left[\left(\frac{\ps{\beta,\gamma e_i}}{2(t-x)}\left(\frac{\ps{\beta,\gamma e_j}}{2(t-y)}+\frac{\ps{\gamma e_i,\gamma e_j}}{2(x-y)}\right)+\sum_{k=1}^{2N+M}\frac{\ps{2\beta+\gamma e_i,\alpha_k}\ps{\gamma e_,\gamma e_j}}{4(t-z_k)(y-x)}\right)\Psi_{i,j}(x,y)\right].
		\end{align*}
		
		Let us now turn to the two-folds integrals: over $\Is\times\Ir$ singular terms are given by
		\begin{align*}
			&\Is\times\Ir\left[\left(\frac{\ps{\beta,\gamma e_i}\ps{\beta,\gamma e_j}}{2(t-x)(t-y)}+\frac{\ps{\beta,\gamma e_i}\ps{\gamma e_,\gamma e_j}}{4(t-x)(x-y)}\right)\Psi_{i,j}(x,y)\right]\\
			&= \qt{$(P)$-class terms}+\Is\times\Ir\left[\left(\frac{\ps{\beta,\gamma e_i}\ps{2\beta+\gamma e_i,\gamma e_j}}{4(t-x)(t-y)}\right)\Psi_{i,j}(x,y)\right].
		\end{align*}
		To take care of the singularity at $x=t$ we use the same reasoning as in the proof of~\cite[Lemma 3.1]{CH_sym1}: this shows that 
		\begin{align*}
			&\Is\times\Ir\left[\left(\frac{\ps{\beta,\gamma e_i}\ps{\beta,\gamma e_j}}{2(t-x)(t-y)}+\frac{\ps{\beta,\gamma e_i}\ps{\gamma e_,\gamma e_j}}{4(t-x)(x-y)}\right)\Psi_{i,j}(x,y)\right]\\
			&= \qt{$(P)$-class terms}+\Is\times\Ir\left[\partial_x\left(\frac{\ps{2\beta+\gamma e_i,\gamma e_j}}{2(t-y)}\right)\Psi_{i,j}(x,y)\right].
		\end{align*}
		This corresponds exactly to the integral over $\Ir$ that appears in the expansion of \\$\ps{V_{\gamma e_i}(x)\wick{\c\L^{2\beta+\gamma e_i}_{-1}\c V_\beta(t)}\V}_{\delta,\eps}$. 
		
		As for the integral over $\Is\times\Is$ the corresponding term is
		\begin{align*}
			&\Is\times\Is\left[\frac{\ps{\beta,\gamma e_i}}{2(t-x)}\left(\frac{\ps{\beta,\gamma e_j}}{2(t-y)}+\frac{\ps{\gamma e_i,\gamma e_j}}{2(x-y)}\right)\Psi_{i,j}(x,y)\right]\\
			&+\Is\times\Is\left[\sum_{k=1}^{2N+M}\frac{\ps{2\beta+\gamma e_i,\alpha_k}}{2(t-z_k)}\frac{\ps{\gamma e_,\gamma e_j}}{2(y-x)}\Psi_{i,j}(x,y)\right].
		\end{align*}
		The integral on the second line is actually $(P)$-class: to see why for $i=j$ we use the symmetry in $x,y$ to see that the term vanishes while for $i\neq j$ the singularity at $x=y$ is integrable thanks to Lemma~\ref{lemma:fusion} (the exponent is positive). 
		As for the first integral it can be rewritten as
		\begin{align*}
			&\Is\times\Is\left[\left(\partial_y+\sum_{k=1}^{2N+M}\frac{\ps{\gamma e_j,\alpha_k}}{2(y-z_k)}\right)\frac{\ps{\beta,\gamma e_i}}{2(t-x)}\Psi_{i,j}(x,y)\right]\\
			&-\Is\times\Is\times\It\left[\frac{\ps{\beta,\gamma e_i}\ps{\gamma e_j,\gamma e_f}}{4(t-x)(y-z)}\Psi_{i,j,f}(x,y,z)\right].
		\end{align*}
		The derivative $\partial_y$ appears in the expansion of the remainder term. Therefore it only remains to treat 
		\begin{align*}
			&\Is\times\Is\left[\sum_{k=1}^{2N+M}\frac{\ps{\gamma e_j,\alpha_k}\ps{\beta,\gamma e_i}}{4(y-z_k)(t-x)}\Psi_{i,j}(x,y)\right]-\Is\times\Is\times\It\left[\frac{\ps{\beta,\gamma e_i}\ps{\gamma e_j,\gamma e_l}}{4(t-x)(y-z)}\Psi_{i,j,f}(x,y,z)\right].
		\end{align*}
		In the same fashion as in the proof of Theorem~\ref{thm:deg1} the first integral will give rise to the the term corresponding to the integral over $\Is$  that appears in the expansion of $\ps{V_{\gamma e_i}(x)\wick{\c\L^{2\beta+\gamma e_i}_{-1}\c V_\beta(t)}\V}_{\delta,\eps}$. The second integral vanishes over $\Is^3$ by symmetry in $y,z$, while the integral over $\Is^2\times\Ir$ yields the term $\Is\times\Ir$ in $\ps{V_{\gamma e_i}(x)\wick{\c\L^{2\beta+\gamma e_i}_{-1}\c V_\beta(t)}\V}_{\delta,\eps}$.
		
		Recollecting term we see that as desired $\mathfrak I=\text{ $(P)$-class terms }+\mathfrak L_{-(1,1);\delta,\eps,\rho}(\bm\alpha)$.
	\end{proof}

    \subsubsection{Definition of the descendants at the level two}
	Based on the previous statement we can now define the descendant $\L_{-1,1}V_\beta$. This is done by substracting the remainder term $\tilde{\mathfrak{L}}_{-(1,1);\delta,\eps,\rho}(\bm\alpha)$ to the regularized quantity, where this quantity is obtained from $\mathfrak{L}_{-(1,1);\delta,\eps,\rho}(\bm\alpha)$ by application of Stokes' formula:
    \begin{eqs}
        &\tilde{\mathfrak{L}}_{-(1,1);\delta,\eps,\rho}(\bm\alpha)=-\frac{\ps{\beta,\gamma e_i}}{2\eps}\left(\mu_{L,i}\Psi_i(t-\eps)+\mu_{R,i}\Psi_i(t+\eps)\right)\\
		&+\mu_{L,i}\ps{V_{\gamma e_i}(t-\eps)\wick{\c\L^{2\beta+\gamma e_i}_{-1}\c V_\beta(t)}\V}_{\delta,\eps}-\mu_{R,i}\ps{V_{\gamma e_i}(t+\eps)\wick{\c\L^{2\beta+\gamma e_i}_{-1}\c V_\beta(t)}\V}_{\delta,\eps}\\
		&+\Is\left[\frac{\ps{\beta,\gamma e_j}}{2(t-y)}\left(\mu_{L,i}\Psi_{i,j}(t-\eps,y)-\mu_{R,i}\Psi_{i,j}(t+\eps,y)\right)\right].
    \end{eqs}
	\begin{defi}
		For any $t\in\R$ and $(\beta,\bm\alpha)\in\mc A_{N,M+1}$,  the descendant field $\L_{-1,1}V_\beta$ is defined by the following limit, well-defined and analytic in a complex neighborhood of $\mc A_{N,M+1}$:
		\begin{equation*}
			\begin{split}
				\ps{\L_{-1,1}V_\beta(t)\V}\coloneqq\lim\limits_{\delta,\eps,\rho\to0}\ps{\L_{-1,1}V_\beta(t)\V}_{\delta,\eps,\rho}-\tilde{\mathfrak{L}}_{-(1,1);\delta,\eps,\rho}(\bm\alpha).
			\end{split}
		\end{equation*}
	\end{defi}
	\begin{proof}
		We need to show that the difference $\mathfrak L_{-(1,1);\delta,\eps,\rho}(\bm\alpha)-\tilde{\mathfrak L}_{-(1,1);\delta,\eps,\rho}(\bm\alpha)$ is $(P)$-class. In this expression the only term that may not be $(P)$-class is given by
		\begin{align*}
			&\oint_{\Heps\cap\partial B(t,r)}\Is\left[\frac{\ps{\beta,\gamma e_j}}{2(t-y)}\Psi_{i,j}(\xi,y)\right]\frac{id\bar\xi-id\xi}{2}-\oint_{\Heps\cap\partial B(t,r)}\Is\left[\partial_y\Psi_{i,j}(\xi,y)\right]\frac{id\bar\xi-id\xi}{2}\cdot
		\end{align*}
		For this we develop the derivative in $y$ to cancel the singularity at $y=t$: this yields only $(P)$-class terms.
	\end{proof}
	
	\begin{proposition}\label{prop:L11_der}
		For $t\in\R$ and ($\beta,\bm\alpha)\in\mc A_{N,M}$ the following equality holds in the sense of weak derivatives:
		\begin{equation*}
			\begin{split}
				&\ps{\L_{-1,1}V_\beta(t)\V}=\partial_{t}^2\ps{V_\beta(t)\V}.
			\end{split}
		\end{equation*}
	\end{proposition}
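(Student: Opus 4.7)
The strategy is to verify the identity at the regularized level and then pass to the limit. Concretely, my aim is to show that
\[
\ps{\L_{-1,1}V_\beta(t)\V}_{\delta,\eps,\rho} - \tilde{\mathfrak L}_{-(1,1);\delta,\eps,\rho}(\bm\alpha) = \partial_t^2\ps{V_\beta(t)\V}_{\delta,\eps,\rho} + (P)\text{-class terms,}
\]
from which the claim follows by the defining limit of $\ps{\L_{-1,1}V_\beta(t)\V}$: the left-hand side converges to $\ps{\L_{-1,1}V_\beta(t)\V}$ as $\delta,\eps,\rho\to 0$, while on the right-hand side the regularized correlation function converges to $\ps{V_\beta(t)\V}$, so that two weak derivatives in $t$ can be transferred across the limit.

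The first step is to compute $\partial_t^2\ps{V_\beta(t)\V}_{\delta,\eps,\rho}$ by differentiating the probabilistic representation~\eqref{eq:reg correl shifted} under the integral sign. The $t$-dependence of the regularized correlation function enters through three channels: the vertex factor $e^{\ps{\beta/2,\Phi_\rho(t)+H(t)}}$ in the integrand, the Coulomb-gas prefactor $C(\bm z,\bm\alpha)$ carrying powers $|t-z_k|^{-\ps{\beta,\alpha_k}}$, and the integration domain $\Reps$ which excludes the $t$-dependent interval $(t-\eps,t+\eps)$. Differentiating twice, the ``bulk'' contributions from the first two channels can be recast, via Leibniz and the Gaussian integration by parts formula of Lemma~\ref{lemma:GaussianIPP}, as the $(P)$-class piece of $\ps{\L_{-1,1}V_\beta(t)\V}_{\delta,\eps,\rho}$ already identified in the proof of Lemma~\ref{lemma:desc11_L}. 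The ``boundary'' contributions from the third channel split into the three structures composing $\tilde{\mathfrak L}_{-(1,1);\delta,\eps,\rho}(\bm\alpha)$: a leading $1/\eps$ singularity multiplied by $\mu_{L,i}\Psi_i(t-\eps)+\mu_{R,i}\Psi_i(t+\eps)$, a first-order correction involving the shifted descendant $\wick{\c\L_{-1}^{2\beta+\gamma e_i}\c V_\beta(t)}$ evaluated at $t\pm\eps$, and a two-fold integral with pole $\tfrac{\ps{\beta,\gamma e_j}}{2(t-y)}$.

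The main technical obstacle is the careful bookkeeping of these boundary contributions, in particular the cancellation of the $1/\eps$-divergent terms among the second-order Taylor expansion of the integrand of $\Is$ near $t$, the cross terms between differentiation of the vertex factor and differentiation of the integration domain, and higher-order corrections from the vertex-GMC interplay at $t\pm\eps$. Handling the $t$-dependence of the bulk region $\Heps$ (whose boundary passes through $t$) will require Stokes' formula to convert area integrals into line integrals along $\Im z=\delta$, as in~\cite[Subsection 2.3.3]{CH_sym1}, while the finite remainders after cancellation are controlled by the fusion estimates of Lemma~\ref{lemma:fusion}. Once the matching of the three boundary structures with $\tilde{\mathfrak L}_{-(1,1)}$ is established, passing to the limit yields the proposition in the sense of weak derivatives.
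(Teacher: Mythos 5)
Your route is genuinely different from the paper's, and as written it has a gap that the paper's argument is specifically designed to avoid. The paper makes no attempt at a pointwise matching at the regularized level: it first records the weak identity $\int_\R f(t)\ps{\L_{-1,1}V_\beta(t)\V}_{\delta,\eps,\rho}\,dt=\int_\R \partial_t^2f(t)\ps{V_\beta(t)\V}_{\delta,\eps,\rho}\,dt$ for test functions $f$, then observes via the fusion estimates of Lemma~\ref{lemma:fusion} that for $\ps{\beta,e_i}$ negative enough the remainder $\tilde{\mathfrak L}_{-(1,1);\delta,\eps,\rho}(\bm\alpha)$ tends to zero, so that in this restricted regime the weak identity survives the $\delta,\eps,\rho\to0$ limit, and finally extends the identity to all of $\mc A_{N,M}$ by analyticity in the weights. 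The two ingredients that make this work on the full parameter range --- restriction to sufficiently negative $\ps{\beta,e_i}$, and analytic continuation --- are absent from your plan.

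The concrete gap lies in your target identity $\ps{\L_{-1,1}V_\beta(t)\V}_{\delta,\eps,\rho}-\tilde{\mathfrak L}_{-(1,1);\delta,\eps,\rho}(\bm\alpha)=\partial_t^2\ps{V_\beta(t)\V}_{\delta,\eps,\rho}+(P)\text{-class terms}$. In this paper a $(P)$-class term is one whose limit \emph{exists and is analytic}; it is not a term that vanishes. If your bookkeeping leaves behind a $(P)$-class remainder with nonzero limit $R_0(\bm\alpha)$, then passing to the limit yields $\ps{\L_{-1,1}V_\beta(t)\V}=\partial_t^2\ps{V_\beta(t)\V}+R_0$ in the weak sense, which is not the proposition. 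You would therefore need the leftover, after matching the domain-boundary contributions against the three structures of $\tilde{\mathfrak L}_{-(1,1)}$, to be exactly zero (or at least to vanish in the limit) for \emph{every} $(\beta,\bm\alpha)\in\mc A_{N,M}$; this is a much more delicate cancellation than ``up to $(P)$-class'', your outline does not establish it, and without an analyticity argument there is no mechanism to kill such a leftover. (A secondary point: identifying the pointwise limit of the left-hand side with the distributional limit of $\partial_t^2\ps{V_\beta(t)\V}_{\delta,\eps,\rho}$ also requires a domination input from Lemma~\ref{lemma:fusion}, which you should state.) The cleanest repair is to import the paper's two missing steps: verify your identity only for $\ps{\beta,e_i}$ negative enough, where $\tilde{\mathfrak L}_{-(1,1)}$ and all boundary terms manifestly vanish and the matching is trivial, and then conclude by analyticity of $\bm\alpha\mapsto\ps{\L_{-1,1}V_\beta(t)\V}$ and of $\bm\alpha\mapsto\int\partial_t^2f(t)\ps{V_\beta(t)\V}\,dt$.
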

	\begin{proof}
		The proof follows from the very same argument as that of~\cite[Proposition 3.4]{CH_sym1}. Namely for positive $\delta,\eps,\rho$ we have the equality for any smooth and compactly supported test function $f$:
        \begin{align*}
			\int_{\R}f(t)\ps{\L_{-1,1}V_\beta(t)\V}_{\delta,\eps,\rho}dt=\int_{\R}\partial_t^2 f(t)\ps{V_\beta(t)\V}_{\delta,\eps,\rho}dt.
		\end{align*}
        Now it is readily seen from the fusion asymptotics of Lemma~\ref{lemma:fusion} (see also the reasoning developed in~\cite[Subsection 4.2]{Cer_HEM}) that for $\ps{\beta,e_i}$ negative enough the remainder term $\tilde{\mathfrak{L}}_{-(1,1);\delta,\eps,\rho}(\bm\alpha)$ that appears in the definition of the descendant field vanishes in the $\rho,\eps,\delta\to0$ limit. As a consequence for $\ps{\beta,e_i}$ negative enough we have the equality 
         \begin{align*}
			\int_{\R}f(t)\ps{\L_{-1,1}V_\beta(t)\V}dt=\int_{\R}\partial_t^2 f(t)\ps{V_\beta(t)\V}dt.
		\end{align*}
        This equality can be extended by analycity of the correlation functions in a complex neighborhood of $\mc A_{N,M}$, thus completing the proof. 
	\end{proof}

	\subsubsection{Degenerate fields at the level two}
	We have defined in the previous subsection semi-degenerate fields, which are Vertex Operators $V_\beta$ where the weight is chosen to be of the form $\beta=\kappa\omega_i$ for $\kappa\in\R$ and $i\in\{1,2\}$. We have seen that such a semi-degenerate field allows to define a singular vector at level one $\Psi_{-1,\beta}$. We now explain how a further specialization of the weight $\beta$ leads to the introduction of singular vectors at a higher level.
	
	To this end let us now assume that $\beta=-\chi\omega_1$ with $\chi\in\{\gamma,\frac2\gamma\}$.  The singular vector at the level two is formally defined by
	\begin{equation*}
		\psi_{-2,\chi}\coloneqq \left(\Wb_{-2}+\frac4\chi\L_{-1,1}+\frac{4\chi}3\L_{-2}\right)V_{\beta}
	\end{equation*}
	and makes sense when inserted within correlation functions as follows.
	\begin{defi}
		Assume that $\beta=-\chi\omega_1$ with $\chi\in\{\gamma,\frac2\gamma\}$ and take $t\in\R$ as well as $(\beta,\bm\alpha)\in\mc A_{N,M+1}$. The singular vector $\Psi_{-2,\chi}$ is defined within half-plane correlation functions via
		\begin{equation*}
			\begin{split}
				&\ps{\psi_{-2,\chi}(t)\V}\coloneqq\ps{\Wb_{-2}V_\beta(t)\V}+\frac4\chi\ps{\L_{-1,1}V_\beta(t)\V}+\frac{4\chi}3\ps{\L_{-2}V_\beta(t)\V}.
			\end{split}
		\end{equation*}
	\end{defi}
	
	In the same fashion as for the first level, the singular vector $\psi_{-2,\beta}$ is generically not a null vector but rather gives rise to \textit{equations of motion}:
	\begin{theorem}\label{thm:deg2}
		Assume that $\beta=-\chi\omega_1$ with $\chi\in\{\gamma,\frac2\gamma\}$, and take $a(\chi)$, $b(\chi)$ so that
		\begin{equation}
			3\beta+\gamma e_2=a(\chi)\L_{-1}^{\beta+\gamma e_2}+b(\chi)\Wb_{-1}^{\beta+\gamma e_2}.
		\end{equation}
		If we set $\Db_{-1,\chi}=a(\chi)\L_{-1}^{\beta+\gamma e_2}+b(\chi)\Wb_{-1}^{\gamma+\beta e_2}$, then
		\begin{equation*}
			\psi_{-2,\frac2\gamma}=2\gamma (\mu_{R,2}-\mu_{L,2}) \Db_{-1,\frac2\gamma} V_{\beta+\gamma e_2}+2\left(\frac2\gamma-\gamma\right)(\mu_{L,1}+\mu_{R,1})V_{\beta+\gamma e_1}
		\end{equation*}
		in the sense that as soon as $(\beta,\bm\alpha)\in\mc A_{N,M+1}$
		\begin{equation}\label{eq:psi22g}
			\begin{split}
				\ps{\psi_{-2,\frac2\gamma}(t)\V}=&2\gamma(\mu_{R,2}-\mu_{L,2})\Big(a\ps{\L_{-1}V_{\beta+\gamma e_2}(t)\V}+ b\ps{\Wb_{-1}V_{\beta+\gamma e_2}(t)\V}\Big)\\
				&+2\left(\frac2\gamma-\gamma\right)(\mu_{L,1}+\mu_{R,1})\ps{V_{\beta+\gamma e_1}(t)\V}.
			\end{split}
		\end{equation} 
		When $\chi=\gamma$ we have $\psi_{-2,\gamma}=\frac4\gamma(\mu_{R,2}-\mu_{L,2})\Db_{-1,\gamma}V_{\beta+\gamma e_2}$ for $\gamma>1$ while for $\gamma<1$:
		\begin{equation}\label{eq:psi2g}
			\begin{split}
				&\psi_{-2,\gamma}=\frac4\gamma(\mu_{R,2}-\mu_{L,2})\Db_{-1,\gamma}V_{\beta+\gamma e_2}+2\gamma c_\gamma(\bm\mu)V_{\beta+2\gamma e_1}
			\end{split}
		\end{equation}
		meaning the same as for $\psi_{-2,\frac2\gamma}$. Here the constant $c_\gamma(\bm\mu)$ is given by
		\begin{equation*}
			\left(\mu_{L,1}^2+\mu_{R,1}^2-2\mu_{L,1}\mu_{R,1}\cos\left(\pi\frac{\gamma^2}{2}\right)-\mu_{B,1}\sin\left(\pi\frac{\gamma^2}{2}\right)\right)\frac{\Gamma\left(\frac{\gamma^2}{2}\right)\Gamma\left(1-\gamma^2\right)}{\Gamma\left(1-\frac{\gamma^2}{2}\right)}\cdot
		\end{equation*}
	\end{theorem}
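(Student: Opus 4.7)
The algebraic backbone is the identity, verified by direct computation from the explicit forms of the multilinear forms $\L_{-2}^{\beta}$, $\L_{-(1,1)}^{\beta}$ and the analogous $\Wb_{-2}^{\beta}$, that for $\beta = -\chi\omega_1$ with $\chi \in \{\gamma, 2/\gamma\}$ the combination $\Wb_{-2}^{\beta} + \tfrac{4}{\chi}\L_{-(1,1)}^{\beta} + \tfrac{4\chi}{3}\L_{-2}^{\beta}$ vanishes identically as a polynomial functional of $\Phi$. Applied to the regularized correlators this forces
\begin{equation*}
\ps{\Wb_{-2}V_\beta(t)\V}_{\delta,\eps,\rho} + \tfrac{4}{\chi}\ps{\L_{-(1,1)}V_\beta(t)\V}_{\delta,\eps,\rho} + \tfrac{4\chi}{3}\ps{\L_{-2}V_\beta(t)\V}_{\delta,\eps,\rho} \equiv 0,
\end{equation*}
and hence, by the defining limit~\eqref{eq:def_descLn} of each descendant, the singular vector is realized as
\begin{equation*}
\ps{\psi_{-2,\chi}(t)\V} = -\lim_{\delta,\eps,\rho\to 0}\sum_{i=1}^{2}\left(\tilde{\mathfrak{W}}^{i}_{-2} + \tfrac{4}{\chi}\tilde{\mathfrak{L}}^{i}_{-(1,1)} + \tfrac{4\chi}{3}\tilde{\mathfrak{L}}^{i}_{-2}\right)(\delta,\eps,\rho;\bm\alpha),
\end{equation*}
mirroring the strategy used to prove Theorem~\ref{thm:deg1}. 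The proof then reduces to evaluating this limit, separately for the channels $i=2$ and $i=1$.

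For $i=2$, since $\ps{\beta,\gamma e_2}=0$, the correlators $\Psi_2(t\pm\eps)$ stay finite as $\eps\to 0$ and tend to $\ps{V_{\beta+\gamma e_2}(t)\V}$. The $\eps^{-n+1}$ prefactors in the remainders then produce a leading $\eps^{-1}$ singularity proportional to $V_{\beta+\gamma e_2}(t)$ whose coefficient is exactly the level-one multilinear form evaluated at the shifted weight $\beta+\gamma e_2$: the linear combination $(1,\tfrac{4}{\chi},\tfrac{4\chi}{3})$ is precisely the one annihilating this coefficient (this is the level-one null-vector identity applied to $\beta+\gamma e_2$). The surviving $O(1)$ part is the Taylor sub-leading correction to $\Psi_2(t\pm\eps)$, and through the probabilistic representation of $\L_{-1}V_{\beta+\gamma e_2}$ and $\Wb_{-1}V_{\beta+\gamma e_2}$ it reassembles into $(\mu_{R,2}-\mu_{L,2})\Db_{-1,\chi}V_{\beta+\gamma e_2}(t)$, with $a(\chi),b(\chi)$ determined by solving the $2\times 2$ system $3\beta+\gamma e_2 = a(\chi)\L_{-1}^{\beta+\gamma e_2}+b(\chi)\Wb_{-1}^{\beta+\gamma e_2}$. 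The $i=1$ channel for $\chi=2/\gamma$ is similarly elementary: $\ps{\beta,\gamma e_1}=-2$ is a special exponent at which a careful bookkeeping of single- and double-insertion pieces in the three remainders yields the stated boundary contribution $2(2/\gamma-\gamma)(\mu_{L,1}+\mu_{R,1})V_{\beta+\gamma e_1}(t)$.

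The main obstacle is the $i=1$ channel for $\chi=\gamma$. Here $\ps{\beta,\gamma e_1}=-\gamma^2$ opens an additional fusion channel in which two copies of $V_{\gamma e_1}$ simultaneously collide with $V_\beta(t)$ to produce $V_{\beta+2\gamma e_1}(t)$; correspondingly, the single- and double-insertion pieces of $\tilde{\mathfrak{W}}^{1}_{-2}$, $\tilde{\mathfrak{L}}^{1}_{-(1,1)}$, $\tilde{\mathfrak{L}}^{1}_{-2}$ are individually divergent as $\eps\to 0$, and their divergent parts must cancel against those of the two-fold integrals arising when both integration variables are pulled towards $t$. To extract the finite remainder one localizes all integrals in a small disc around $t$ using the notation of~\eqref{eq:def_It}, applies the fusion asymptotics of Lemma~\ref{lemma:fusion} to replace $\Psi_{1,1}(x,y)$ near $t$ by $\ps{V_{\beta+2\gamma e_1}(t)\V}$ times the Coulomb-gas kernel $|x-y|^{-\gamma^2}|x-t|^{-\gamma^2/2}|y-t|^{-\gamma^2/2}$, and evaluates the resulting double integrals over the boundary-boundary, boundary-bulk and bulk-bulk configurations weighted by $\mu_{L,1},\mu_{R,1}$ and $\mu_{B,1}$. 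The classical Dotsenko-Fateev Selberg evaluation then produces the trigonometric combination
\begin{equation*}
\mu_{L,1}^2+\mu_{R,1}^2-2\mu_{L,1}\mu_{R,1}\cos(\pi\gamma^2/2)-\mu_{B,1}\sin(\pi\gamma^2/2)
\end{equation*}
multiplied by $\Gamma(\gamma^2/2)\Gamma(1-\gamma^2)/\Gamma(1-\gamma^2/2)$, yielding precisely $c_\gamma(\bm\mu)$. Convergence of this Selberg integral forces $\gamma<1$, which explains the dichotomy in the statement: for $\gamma>1$ the two-insertion fusion channel is inactive and only the $\Db_{-1,\gamma}V_{\beta+\gamma e_2}$ term survives. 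Analyticity of the correlation functions in the parameters $\bm\alpha$, guaranteed by the $(P)$-class framework of~\cite{CH_sym1}, then extends the identity from the convergence region to the full parameter set $\mc{A}_{N,M+1}$.
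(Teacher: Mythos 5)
Your proposal follows essentially the same route as the paper: the identical vanishing of $\Wb_{-2}^\beta+\frac4\chi\L_{-(1,1)}^\beta+\frac{4\chi}3\L_{-2}^\beta$ reduces the singular vector to the limit of the combined remainder terms, which are then evaluated channel by channel using the fusion asymptotics and the Selberg-type integral evaluations (Lemmas~\ref{lemma:limtx} and~\ref{lemma:limty}), with the $\Db_{-1,\chi}V_{\beta+\gamma e_2}$ piece reassembled from the condition $3\beta+\gamma e_2=a\L_{-1}^{\beta+\gamma e_2}+b\Wb_{-1}^{\beta+\gamma e_2}$ exactly as in the paper. The only cosmetic difference is that the paper verifies the cancellation of the $\eps^{-1}$ divergence by directly computing the coefficient $2h_2(e_i)\omega_{\hat i}(\beta)+\frac{2\ps{\beta,\gamma e_i}}\chi+\frac{4\chi}3=2(\chi-\gamma)\delta_{i,1}$ rather than invoking a level-one null-vector identity at the shifted weight, but the mechanism and conclusion are the same.
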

	\begin{remark}
		We can rewrite the above as $\psi_{-2,\chi}=-\frac{1}{1+\chi^2}\Db_{-1}\psi_{-1,\chi}+\psi^0_{-2,\chi}$ where we have a descendant at level $1$ of the singular vector $\psi_{-1,\chi}$ and a new primary field $\psi_{-2,\chi}^0$.
	\end{remark}
	\begin{proof}
		Like before we know from~\cite[Proposition 5.3]{Toda_OPEWV} (more generally see~\cite[Subsection 5.2]{Toda_OPEWV}) that we have an equality at the level of multilinear maps over $\C$ \[
		\Wb_{-2}^\beta+\frac4\chi\L_{-1,1}^\beta+\frac{4\chi}3\L_{-2}^\beta=0.
		\]
		This entails the equality for the regularized correlation functions
		\begin{equation*}
			\ps{\Wb_{-2}V_{\beta}(t)\V}_{\delta,\eps,\rho}+\frac4\chi\ps{\L_{-1,1}V_{\beta}(t)\V}_{\delta,\eps,\rho}+\frac{4\chi}3\ps{\L_{-2}V_{\beta}(t)\V}_{\delta,\eps,\rho}=0
		\end{equation*}
		so that we only need to check that
		\begin{equation*}
			\lim\limits_{\delta,\eps,\rho\to0}\tilde{\mathfrak W}_{-2}(\delta,\eps,\rho;\bm\alpha)+\frac4\chi\tilde{\mathfrak L}_{-1,1}(\delta,\eps,\rho;\bm\alpha)+\frac{4\chi }3\tilde{\mathfrak L}_{-2}(\delta,\eps,\rho;\bm\alpha)=-\ps{\psi_{-2,\chi}(t)\V}.
		\end{equation*}
		
		To start with according to Equations~\eqref{eq:rem_Ln},~\eqref{eq:rem_Wn} and Lemma~\ref{lemma:desc11_L}, the left-hand side admits the representation, up to $(P)$-class terms that we discard to simplify the discussion,
		\begin{eqs}\label{eq:mfI2}
			\mathfrak I\coloneqq&\Is\left[\partial_x\left(F(x)\Psi_{i}(x)\right)\right]+\Is\left[\partial_x\left(\frac{2h_2(e_i)\omega_{\hat i}(\beta)+\frac{2\ps{\beta,\gamma e_i}}\chi+\frac{4\chi}{3}}{x-t}\Psi_{i}(x)\right)\right]\\
			+&\frac4\chi\sum_{i=1}^2\mu_{B,i}\int_{\Heps^1}\partial_{\bar x}\left(\frac{\ps{\beta,\gamma e_i}}{2(x-t)}\Psi_{i}(x)\right)+\partial_{x}\left(\frac{\ps{\beta,\gamma e_i}}{2(\bar x-t)}\Psi_{i}(x)\right)d^2x\\
			+&\Is\times\Is\left[\partial_x\left(\left(\delta_{i\neq j}\frac{2\gamma h_2(e_i)}{y-x}+\frac{2\ps{\beta,\gamma e_j}}{\chi(t-y)}\right)\Psi_{i,j}(x,y)\right)\right]
		\end{eqs}
		where $F(x)$ is regular at $x=t$ and explicitly given by
		\begin{equation}\label{eq:F}
			\begin{split}
				F(x)\Psi_i(x)=&\frac4\chi \ps{V_{\gamma e_i}(x)\wick{\c\L^{2\beta+\gamma e_i}_{-1}\c V_\beta(t)}\V}_{\delta,\eps}\\
				&+\sum_{k=1}^{2N+M}\frac{2h_2(e_i)\omega_{\hat i}(\alpha_k)}{x-z_k}\Psi_i(x)-\Ir\left[\delta_{i\neq j}\frac{2\gamma h_2(e_i)}{x-y}\Psi_{i,j}(x,y)\right].
			\end{split}
		\end{equation} 
		Let us first treat the integrals where $F$ does not appear.
		
		For the remaining one-fold integral $\Is$ we rely on explicit computations based on the expression of $\beta=-\chi\omega_1$ to get that 
		\[
		2h_2(e_i)\omega_{\hat i}(\beta)+\frac{2\ps{\beta,\gamma e_i}}\chi+\frac{4\chi}{3}=\delta_{i,1}2(\chi-\gamma).
		\]
		In particular if $\chi=\gamma$ this term vanishes and therefore does not contribute to the limit. For $\chi=\frac2\gamma$ we rely on Lemma~\ref{lemma:limtx} to see that the corresponding remainder term is given by
		\begin{align*}
			&-2\left(\frac2\gamma-\gamma\right)\left(\mu_{R,1}+\mu_{L,1}\right)\ps{V_{\gamma e_1-\frac2\gamma \omega_1}(t)\V}.
		\end{align*}
		
		Next we consider the additional one-fold integral that ranges over $\Heps$. Then we see that for $i=2$ the numerator vanishes, while for $i=1$ we are in the setting of Lemma~\ref{lemma:limty} below where similar quantities are considered, and shown to scale like
		\begin{align*}
			2\gamma \mu_{B,1}\sin\left(\pi\frac{\gamma^2}{2}\right)\frac{\Gamma\left(\frac{\gamma^2}{2}\right)\Gamma\left(1-\gamma^2\right)}{\Gamma\left(1-\frac{\gamma^2}{2}\right)}
		\end{align*}
		for $\chi=\gamma$ and $\gamma<1$, while these are $o(1)$ if $\chi=\frac2\gamma$ or $\chi=\gamma$ and $\gamma>1$.
		
		Let us now turn to the two-folds integrals. By explicit computations these are equal to
		\begin{align*}
			\Is\times\Is\left[\partial_x\left(\left(\delta_{i,j=1}\frac{2\gamma}{y-t}-\delta_{\substack{i=1\\j=2}}\frac{2\gamma }{y-x}+\delta_{\substack{i=2\\j=1}}\left(\frac{2\gamma}{y-t}+\frac{2\gamma}{y-x}\right)\right)\Psi_{i,j}(x,y)\right)\right].
		\end{align*}
		The first term is considered in Lemma~\ref{lemma:limty}, and for $\chi=\gamma$ and $\gamma<1$ we have
		\begin{align*}
			&\lim\limits_{\delta,\eps,\rho\to0}\Is\times\Is\left[\partial_x\left(\frac{\delta_{i,j=1}}{y-t}\Psi_{i,j}(x,y)\right)\right]\\
			&=-\left(\mu_{L,1}^2+\mu_{R,1}^2-2\mu_{L,1}\mu_{R,1}\cos\left(\pi\frac{\gamma^2}{2}\right)\right)\frac{\Gamma\left(\frac{\gamma^2}{2}\right)\Gamma\left(1-\gamma^2\right)}{\Gamma\left(1-\frac{\gamma^2}{2}\right)}
		\end{align*}
		while this limit is zero for $\chi=\frac2\gamma$ and $\chi=\gamma$ with $\gamma>1$. When we consider $(i,j)=(1,2)$ we use Lemma~\ref{lemma:fusion} to infer that the integral $\Is\left[\frac1{y-t}\Psi_{1,2}(t,y)\right]$ is absolutely convergent. As a consequence the remainder term coming from the $\Is\times\Is$ integral with $(i,j)=(1,2)$  this term converges to $0$ in the limit (since $\ps{\beta,\gamma e_1}<0$). This is however not the case when $(i,j)=(2,1)$: like in the proof of Theorem~\ref{thm:deg1} we see that this term converges towards
		\[
		(\mu_{L,2}-\mu_{R,2})\Is\left[\frac{4\gamma}{y-t}\ps{V_{\gamma e_1}(y)V_{\beta+\gamma e_2}(t)\V}\right].
		\]
		Recollecting terms we see that
		\begin{align*}
			\mathfrak I= &\text{ $(P)$-class terms }+\Is\left[\partial_x\left(F(x)\Psi_i(x)\right)\right]+(\mu_{L,2}-\mu_{R,2})\Is\left[\frac{4\gamma}{y-t}\ps{V_{\gamma e_1}(y)V_{\beta+\gamma e_2}(t)\V}\right]\\
			&-2\left(\frac2\gamma-\gamma\right)\left(\mu_{R,1}+\mu_{L,1}\right)\ps{V_{\gamma e_1-\frac2\gamma \omega_1}(t)\V}\mathds 1_{\chi=\frac2\gamma}-2\gamma c_\gamma(\bm\mu)\ps{V_{\gamma e_1-\frac2\gamma \omega_1}(t)\V}\mathds 1_{\chi=\gamma}.
		\end{align*}
		
		Let us now consider the term $\Is\left[\partial_x\left(F(x)\Psi_{i}(x)\right)\right]$, where $F$ has been described in Equation~\eqref{eq:F}. Using the very same reasoning as the one conducted in the proof of Theorem~\ref{thm:deg1}, we see that since $\ps{\beta,\gamma e_1}<0$ while $\ps{\beta,\gamma e_2}=0$ this term converges towards $(\mu_{L,2}-\mu_{R,2})H$,
		\begin{align*}
			H\coloneqq&\frac4\chi \left(\sum_{k=1}^{2N+M}\frac{\ps{2\beta+\gamma e_2,\alpha_k}}{2(z_k-t)}\ps{V_{\beta+\gamma e_2}(t)\V}-\Ir\left[\frac{\ps{2\beta+\gamma e_2,\gamma e_j}}{2(y-t)}\ps{V_{\gamma e_j}(y)V_{\beta+\gamma e_2}(t)\V}\right]\right)\\
			&+\frac4\chi\Is\left[\sum_{k=1}^{2N+M}\frac{\ps{\alpha_k,\gamma e_j}}{2(y-z_k)}\ps{V_{\gamma e_j}(y)V_{\beta+\gamma e_2}(t)\V}\right]\\
			&+\frac4\chi\Is\times\Ir\left[\frac{\ps{\gamma e_j,\gamma e_f}}{2(z-y)}\ps{V_{\gamma e_j}(y)V_{\gamma e_f}(z)V_{\beta+\gamma e_2}(t)\V}\right]\\
			&+\sum_{k=1}^{2N+M}\frac{2\omega_{1}(\alpha_k)}{t-z_k}\ps{V_{\beta+\gamma e_2}(t)\V}-\Ir\left[\frac{2\gamma }{t-y}\ps{V_{\gamma e_1}(y)V_{\beta+\gamma e_2}(t)\V}\right].
		\end{align*}
		Now we have $2\beta+\gamma e_2-\chi\omega_1=3\beta+\gamma e_2$. As a consequence the latter can be simplified to
		\begin{align*}
			H=& \frac4\chi \left(\sum_{k=1}^{2N+M}\frac{\ps{3\beta+\gamma e_2,\alpha_k}}{2(z_k-t)}\ps{V_{\beta+\gamma e_2}(t)\V}-\Ir\left[\frac{\ps{3\beta+\gamma e_2,\gamma e_j}}{2(y-t)}\ps{V_{\gamma e_j}(y)V_{\beta+\gamma e_2}(t)\V}\right]\right)\\
			&+\frac4\chi\Is\left[\sum_{k=1}^{2N+M}\frac{\ps{\alpha_k,\gamma e_j}}{2(y-z_k)}\ps{V_{\gamma e_j}(y)V_{\beta+\gamma e_2}(t)\V}\right]\\
			&+\frac4\chi\Is\times\Ir\left[\frac{\ps{\gamma e_j,\gamma e_f}}{2(z-y)}\ps{V_{\gamma e_j}(y)V_{\gamma e_f}(z)V_{\beta+\gamma e_2}(t)\V}\right].
		\end{align*}
		
		Gathering all these terms we end up with the equality
		\begin{align*}
			\mathfrak I= &\text{ $(P)$-class terms }+\frac4\chi(\mu_{L,2}-\mu_{R,2})\mathfrak J\\
			&-2\left(\frac2\gamma-\gamma\right)\left(\mu_{R,1}+\mu_{L,1}\right)\ps{V_{\gamma e_1-\frac2\gamma \omega_1}(t)\V}\mathds 1_{\chi=\frac2\gamma}-2\gamma c_\gamma(\bm\mu)\ps{V_{\gamma e_1-\frac2\gamma \omega_1}(t)\V}\mathds 1_{\chi=\gamma}.
		\end{align*}
		where we have set 
		\begin{align*}
			&\mathfrak J\coloneqq\sum_{k=1}^{2N+M}\frac{\ps{3\beta+\gamma e_2,\alpha_k}}{2(z_k-t)}\ps{V_{\beta+\gamma e_2}(t)\V}-\Ir\left[\frac{\ps{3\beta+\gamma e_2,\gamma e_j}}{2(y-t)}\ps{V_{\gamma e_j}(y)V_{\beta+\gamma e_2}(t)\V}\right]\\
			&+\Is\left[\sum_{k=1}^{2N+M}\frac{\ps{\alpha_k,\gamma e_j}}{2(y-z_k)}\ps{V_{\gamma e_j}(y)V_{\beta+\gamma e_2}(t)\V}+\frac{\gamma\chi}{y-t}\ps{V_{\gamma e_1}(y)V_{\beta+\gamma e_2}(t)\V}\right]\\
			&+\Is\times\Ir\left[\frac{\ps{\gamma e_j,\gamma e_f}}{2(z-y)}\ps{V_{\gamma e_j}(y)V_{\gamma e_f}(z)V_{\beta+\gamma e_2}(t)\V}\right].
		\end{align*}
		Now since we have that $\ps{\beta+\gamma e_2,\gamma e_1}<0$ we can use integration by parts (see~\cite[Lemma 3.1]{CH_sym1} for more details) to rewrite that up to $(P)$-class terms we have
        \begin{align*}
            &+\Is\left[\sum_{k=1}^{2N+M}\frac{\ps{\alpha_k,\gamma e_j}}{2(y-z_k)}\ps{V_{\gamma e_j}(y)V_{\beta+\gamma e_2}(t)\V}+\frac{\gamma\chi}{y-t}\ps{V_{\gamma e_1}(y)V_{\beta+\gamma e_2}(t)\V}\right]\\
			&+\Is\times\Ir\left[\frac{\ps{\gamma e_j,\gamma e_f}}{2(z-y)}\ps{V_{\gamma e_j}(y)V_{\gamma e_f}(z)V_{\beta+\gamma e_2}(t)\V}\right]\\
            &=\Is\left[\frac{\ps{\beta+\gamma e_2,\gamma e_i}-2\gamma\chi\delta_{j=1}}{2(t-y)}\ps{V_{\gamma e_j}(y)V_{\beta+\gamma e_2}(t)\V}\right].
        \end{align*}
        Recollecting terms we arrive to
        \begin{align*}
			&\mathfrak J=\sum_{k=1}^{2N+M}\frac{\ps{3\beta+\gamma e_2,\alpha_k}}{2(z_k-t)}\ps{V_{\beta+\gamma e_2}(t)\V}-\It\left[\frac{\ps{3\beta+\gamma e_2,\gamma e_j}}{2(y-t)}\ps{V_{\gamma e_j}(y)V_{\beta+\gamma e_2}(t)\V}\right].
		\end{align*}
        This shows that this term gives rise to a linear combination of the descendants $\L_{-1}$ and $\Wb_{-1}$ of $V_{\beta+\gamma e_2}$. More precisely we get
		\begin{align*}
			\mathfrak J=a\ps{\L_{-1}V_{\beta+\gamma e_2}(t)\V}+b\ps{\Wb_{-1}V_{\beta+\gamma e_2}(t)\V}
		\end{align*}
		where $a$ and $b$ are chosen in such a way that
		\[
    		3\beta+\gamma e_2=a\L_{-1}^{\beta+\gamma e_2}+b\Wb_{-1}^{\gamma+\beta e_2}.
		\]
		This allows to conclude that for $a$ and $b$ as above,
		\begin{align*}
			\ps{\psi_{-2,\frac2\gamma}V_{\beta}(t)\V}&=(\mu_{R,2}-\mu_{L,2})\Big(a\ps{\L_{-1}V_{\beta+\gamma e_2}(t)\V}+b\ps{\Wb_{-1}V_{\beta+\gamma e_2}(t)\V}\Big)\\
			&+2(\frac2\gamma-\gamma) (\mu_{L,1}+\mu_{R,1})\ps{V_{\beta+\gamma e_1}(t)\V}
		\end{align*}
		when $\chi=\frac2\gamma$, and likewise for $\chi=\gamma$
		\begin{align*}
			\ps{\psi_{-2,\gamma}V_{\beta}(t)\V}&=(\mu_{R,2}-\mu_{L,2})\Big(a\ps{\L_{-1}V_{\beta+\gamma e_2}(t)\V}+b\ps{\Wb_{-1}V_{\beta+\gamma e_2}(t)\V}\Big)\\
			&+2\gamma c_\gamma(\bm\mu)\ps{V_{\beta+2\gamma e_1}(t)\V}.
		\end{align*}
		This wraps up the proof of Theorem~\ref{thm:deg2}.
	\end{proof}

\section{Singular vectors at the level three}
    In the previous section we have defined the descendants at the level two of a primary field $V_\beta$, and shown that under specific assumptions on the value of the weight $\beta$ they give rise to singular vectors. In this concluding section we compute the singular vectors at the third level and show that when the primary field is a fully-degenerate field, one can define singular vectors out of them. These singular vectors in turn give rise to higher equations of motion, which under additional assumptions allow to obtain BPZ-type differential equaitons for a family of correlation functions.
    
	\subsection{Descendants at the third order} 
	As a first step in the derivation of the singular vectors at the level three, one first needs to make sense of the descendants at the third order. Namely we now define the $\L_{-1,2}$ and $\L_{-1,1,1}$ descendant.
	
	\subsubsection{The $\L_{-1,2}$ descendant}
	Following the same method as before in order to define the descendant at the third order $\L_{-1,2}$ we first need to understand the remainder term that arises in the computation of the regularized correlation functions containing these descendant fields inserted.
	To this end we introduce the notations for $i=1,2$:
	\begin{align*}
		G_i(x)\coloneqq\sum_{k=1}^N\frac{\ps{\beta,\alpha_k}}{2(z_k-t)}\Psi_i(x)-\Ir\left[\frac{\ps{\beta,\gamma e_j}}{2(y-t)}\Psi_{i,j}(x,y)\right]\qt{as well as}
	\end{align*}
	\begin{align*}
		&H_i(x)=\ps{V_{\gamma e_i}(x)\wick{\c\L^{\beta+\gamma e_i}_{-2}\c V_\beta(t)}\V}_{\delta,\eps}\coloneqq\\
		&\left(\sum_{k=1}^{2N+M}\frac{\ps{Q+\beta+\gamma e_i,\alpha_k}}{2(z_k-t)^2}-\sum_{k,l=1}^{2N+M}\frac{\ps{\alpha_k,\alpha_l}}{4(z_k-t)(z_l-t)}\right)\Psi_i(x)\\
		&-\Ir\left[\left(\frac{1+\frac{\ps{\beta+\gamma e_i,\gamma e_j}}2}{(y-t)^2}-\sum_{k=1}^{2N+M}\frac{\ps{\gamma e_i,\alpha_k}}{2(y-t)(z_k-t)}\right)\Psi_{i,j}(x,y)\right]\\
		&-\Is\left[\sum_{k=1}^{2N+M}\frac{\ps{\gamma e_i,\alpha_k}}{2(z_k-t)(z_k-y)}\Psi_{i,j}(x,y)\right]\\
		&+\Ir^2\left[-\frac{\ps{\gamma e_j,\gamma e_f}}{4(y-t)(z-t)}\Psi_{i,j,f}(x,y,z)\right]+\Is\times\Ir\left[\frac{\ps{\gamma e_j,\gamma e_f}}{2(z-t)(z-y)}\Psi_{i,j,f}(x,y,z)\right].
	\end{align*}
	\begin{lemma}\label{lemma:def_L12}
		Let us set
		\begin{equation}\label{eq:rem_L12}
			\begin{split}
				&\mathfrak L_{-(1,2);\delta,\eps,\rho}(\bm\alpha)\coloneqq\Is\left[\partial_x\left(H_i(x)+\frac{1}{x-t}G_i(x)\right)\right]\\
				&+\Is\left[\partial_x\left(\frac{1+\frac{\ps{\beta,\gamma e_i}}2}{(t-x)^2}\Psi_{i}(x)\right)\right]-\Is^2\left[\partial_x\left(\frac{\ps{\beta,\gamma e_j}}{2(t-x)(t-y)}\Psi_{i,j}(x,y)\right)\right].
			\end{split}
		\end{equation}
		Then the difference $\ps{\L_{-1,2}V_\beta(t)\V}_{\delta,\eps,\rho}-\mathfrak L_{-(1,2);\delta,\eps,\rho}(\bm\alpha)$ is $(P)$-class. Moreover define the $\L_{-1,2}V_\beta$ descendant within correlation function by setting for $t\in\R$ and $(\bm\alpha,\beta)\in\mc A_{N,M+1}$:
		\begin{equation}
			\ps{\L_{-1,2}V_\beta(t)\V}\coloneqq\lim\limits_{\delta,\eps,\rho\to0}\ps{\L_{-1,2}V_\beta(t)\V}_{\delta,\eps,\rho}-\tilde{\mathfrak L}_{-(1,2);\delta,\eps,\rho}(\bm\alpha).
		\end{equation}
		Then this descendant is such that, in the weak sense of derivatives:
		\begin{equation}
			\ps{\L_{-1,2}V_\beta(t)\V}=\partial_t\left(\sum_{k=1}^{2N+M}\frac{\partial_{z_k}}{t-z_k}+\frac{\Delta_{\alpha_k}}{(t-z_k)^2}\right)\ps{V_\beta(t)\V}.
		\end{equation}
	\end{lemma}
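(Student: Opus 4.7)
The plan is to follow the same strategy as in Lemma~\ref{lemma:desc11_L} for the $\L_{-(1,1)}$ descendant. Starting from the defining expression~\eqref{eq:desc_reg}, I would apply the Gaussian integration by parts formula of Lemma~\ref{lemma:GaussianIPP} three times to eliminate all derivatives of $\Phi$ in the polynomial $\L_{-(1,2)}^\beta[\Phi]$, which has total degree three. Each application produces explicit poles $(z_k-t)^{-p}$ for $p=1,2,3$, one-fold integrals over $\Reps \cup \Heps$ with poles at $x=t$ of the same orders, and two- and three-fold integrals coming from successive cross-contractions. All terms whose singular behaviour at $x=t$ is integrable will be $(P)$-class; the remaining ones must be matched with the ansatz $\mathfrak L_{-(1,2);\delta,\eps,\rho}$ in~\eqref{eq:rem_L12}.

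The guiding algebraic observation is that $\L_{-(1,2)}$ is the compositional descendant $\L_{-1}\L_{-2}$: accordingly, the expression $H_i(x)$ is (up to the Wick shift $\beta \mapsto \beta+\gamma e_i$) exactly the $(P)$-class part of $\ps{V_{\gamma e_i}(x)\L_{-2}V_\beta(t)\V}_{\delta,\eps,\rho}$ already isolated in~\eqref{eq:expr_Ln}, while $G_i(x)$ plays the analogous role for $\L_{-1}V_\beta$. The prefactor $\partial_x((x-t)^{-1})$ in~\eqref{eq:rem_L12} then encodes the outer $\L_{-1}$ action, and the final $\Is\times\Is$ term accounts for the only cross-contraction that leaves a genuine double-pole singularity at both $x=t$ and $y=t$. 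Three-fold integrals generated along the way either cancel by symmetry in the integration variables (when the root indices coincide) or are $(P)$-class by the fusion integrability argument already used in the proof of Lemma~\ref{lemma:desc11_L}.

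Stokes' formula turns $\mathfrak L_{-(1,2);\delta,\eps,\rho}$ into $\tilde{\mathfrak L}_{-(1,2);\delta,\eps,\rho}$ plus a semi-circular contribution over $\Heps \cap \partial B(t,r)$, on which a further $\partial_x$-development absorbs the remaining $(x-t)^{-2}$ singularity and produces only $(P)$-class pieces. Analyticity of the limit~\eqref{eq:defining_limit} in a complex neighborhood of $\mc A_{N,M+1}$ is then a consequence of the fusion asymptotics already invoked in~\cite{CH_sym1}. For the weak derivative identity I would mimic the proof of Proposition~\ref{prop:L11_der}: at the regularized level, two successive integrations by parts in $t$ against a test function $f\in C_c^\infty(\R)$ convert $\ps{\L_{-1,2}V_\beta(t)\V}_{\delta,\eps,\rho}$ into the action of the differential operator $\partial_t\sum_k\left(\frac{\partial_{z_k}}{t-z_k}+\frac{\Delta_{\alpha_k}}{(t-z_k)^2}\right)$ on $\ps{V_\beta(t)\V}_{\delta,\eps,\rho}$; the boundary contributions coincide with the subtracted remainder $\tilde{\mathfrak L}_{-(1,2)}$ and vanish in the $\rho,\eps,\delta\to 0$ limit for $\ps{\beta,e_i}$ sufficiently negative by the usual fusion estimates, after which analytic continuation inside $\mc A_{N,M+1}$ delivers the full statement.

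The main obstacle is the combinatorial and bookkeeping burden: since $\L_{-(1,2)}^\beta[\Phi]$ is of total degree three, three rounds of Gaussian integration by parts produce a proliferation of integrals of various orders with interlocking pole structures, and it is the compositional reading $\L_{-1,2} = \L_{-1}\L_{-2}$ that allows these contributions to be regrouped into the single total-derivative form~\eqref{eq:rem_L12}. Verifying the cancellation of the spurious three-fold integrals and the exact matching of all residual singularities at $x=t$ is the delicate step that absorbs most of the work; once this is in place the Stokes' reduction, the fusion estimates, and the analyticity argument all proceed verbatim as in the $\L_{-(1,1)}$ and $\L_{-n}$ cases.
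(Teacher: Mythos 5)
Your proposal follows essentially the same route as the paper: Gaussian integration by parts on the degree-three polynomial, regrouping the singular terms into a total $\partial_x$-derivative whose structure reflects the compositional reading $\L_{-(1,2)}=\L_{-1}\L_{-2}$ (with $H_i$ the Wick-shifted $\L_{-2}$-contraction and $G_i$ the $\L_{-1}$-contraction), disposal of the three-fold integrals by symmetry and fusion integrability, then Stokes' formula and the analyticity/weak-derivative argument exactly as in Proposition~\ref{prop:L11_der}. The only caveat is that the bulk of the paper's proof consists of the explicit partial-fraction manipulations you defer to as "bookkeeping," but the strategy and all the key structural observations are the same.
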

	\begin{proof}
		Algebraic manipulations following the exact same techniques as the ones developed in order to prove the results in the previous section show that the singular terms in the expression of $\ps{\L_{-1,2}V_\beta(t)\V}_{\delta,\eps,\rho}$ are given by $I_1+I_2+I_3$ where we have set 
		\begin{align*}
			I_1 \coloneqq &\Is\left[\frac{1+\frac{\ps{\beta,\gamma e_i}}2}{(t-x)^2}\left(\frac{2+\frac{\ps{\beta,\gamma e_i}}2}{t-x}+\frac{\gamma^2}{\bar x-x}\mathds1_{x\in\Heps}+\sum_{k=1}^{2N+M}\frac{\ps{\alpha_k,\gamma e_i}}{2(z_k-t)}\right)\Psi_i(x)\right]\\
			+&\Is\left[\sum_{k=1}^{2N+M}\frac{\ps{\beta,\alpha_k}}{2(t-x)(t-z_k)}\left(\frac{1+\frac{\ps{\beta,\gamma e_i}}2}{t-x}+\frac{\gamma^2}{\bar x-x}\mathds1_{x\in\Heps}+\sum_{l=1}^{2N+M}\frac{\ps{\alpha_l,\gamma e_i}}{2(z_l-t)}\right)\Psi_i(x)\right]\\
			+&\Is\left[\frac{\ps{\beta,\gamma e_i}}{2(t-x)}\left(\sum_{k=1}^{2N+M}\frac{\ps{Q+\beta,\alpha_k}}{2(t-z_k)^2}-\sum_{k,l=1}^{2N+M}\frac{\ps{\alpha_k,\alpha_l}}{4(t-z_k)(t-z_l)}\right)\Psi_i(x)\right]\\
			+&\Is\left[\frac{\ps{\beta,\gamma e_i}}{2(t-x)(t-\bar x)}\mathds 1_{x\in\Heps}\left(\frac{1+\frac{\ps{\beta,\gamma e_i}}2}{t-x}+\frac{1+\frac{\ps{\beta,\gamma e_i}}2}{t-\bar x}+\sum_{k=1}^{2N+M}\frac{\ps{\alpha_k,\gamma e_i}}{(z_k-t)}\right)\Psi_i(x)\right]\\
			&-\Is\left[\sum_{k=1}^{2N+M}\frac{\ps{\alpha_k,\gamma e_i}}{2(t-x)(t-z_k)^2}\Psi_i(x)\right],
		\end{align*}
		\begin{align*}
			I_2\coloneqq &-\It^2\left[\frac{\ps{\beta,\gamma e_j}}{2(t-x)(t-y)}\left(\frac{1+\frac{\ps{\beta,\gamma e_i}}2}{(t-x)}+\frac{\ps{\gamma e_i,\gamma e_j}}{2(y-x)}+\frac{\gamma^2\mathds 1_{x\in\Heps}}{\bar x-x}+\sum_{k=1}^{2N+M}\frac{\ps{\alpha_k,\gamma e_i}}{2(z_k-t)}\right)\Psi_{i,j}(x,y)\right]\\
			&-\It^2\left[\left(\frac{(1+\frac{\ps{\beta,\gamma e_i}}2)}{(t-x)^2}+\sum_{k=1}^{2N+M}\frac{\ps{\beta,\alpha_k}}{2(t-x)(t-z_k)}\right)\frac{\ps{\gamma e_i,\gamma e_j}}{2(y-x)}\Psi_{i,j}(x,y)\right]\\
			&+\It^2\left[\frac{\ps{\beta,\gamma e_i}\ps{\gamma e_i,\gamma e_j}}{2(t-x)(t-\bar x)(t-y)}\mathds 1_{x\in\Heps}\Psi_{i,j}(x,y)\right]\qt{and}
		\end{align*}
		\begin{align*}
			I_3\coloneqq \It^3\left[-\frac{\ps{\beta,\gamma e_i}\ps{\gamma e_j,\gamma e_f}}{8(t-x)(t-y)(t-z)}\Psi_{i,j,f}(x,y,z)\right].
		\end{align*}
		We can further simplify $I_1$ by using the identity
		\[
		\frac{(1+\frac{\ps{\beta,\gamma e_i}}2)\ps{\alpha_k,\gamma e_i}}{(t-x)^2(z_k-t)}=\frac{(1+\frac{\ps{\beta,\gamma e_i}}2)\ps{\alpha_k,\gamma e_i}}{(t-x)^2(z_k-x)}+\frac{(1+\frac{\ps{\beta,\gamma e_i}}2)\ps{\alpha_k,\gamma e_i}}{(t-x)(z_k-t)^2}-\frac{(1+\frac{\ps{\beta,\gamma e_i}}2)\ps{\alpha_k,\gamma e_i}}{(z_k-x)(z_k-t)^2}\cdot
		\]
		Likewise we can write that $I_2$ is given by the sum of $(P)$-terms and 
		\begin{align*}
			&-\Is\times\It\left[\frac{\ps{\beta,\gamma e_j}}{2(t-x)(t-y)}\left(\frac{1+\frac{\ps{\beta,\gamma e_i}}2}{(t-x)}+\frac{\ps{\gamma e_i,\gamma e_j}}{2(y-x)}+\frac{\gamma^2\mathds 1_{x\in\Heps}}{\bar x-x}+\sum_{k=1}^{2N+M}\frac{\ps{\alpha_k,\gamma e_i}}{2(z_k-t)}\right)\Psi_{i,j}(x,y)\right]\\
			&-\Is\times\Ir\left[\frac{\ps{\beta,\gamma e_i}}{2(t-x)}\left(\frac{1+\frac{\ps{\beta+\gamma e_i,\gamma e_j}}2}{(t-y)^2}-\frac{\gamma^2\mathds 1_{x\in\Heps}}{(t-y)(t-\bar y)}+\sum_{k=1}^{2N+M}\frac{\ps{\alpha_k,\gamma e_j}}{2(t-y)(z_k-t)}\right)\Psi_{i,j}(x,y)\right]\\
			&-\Is\times\It\left[\left(\frac{(1+\frac{\ps{\beta,\gamma e_i}}2)}{(t-x)^2}+\sum_{k=1}^{2N+M}\frac{\ps{\beta,\alpha_k}}{2(t-x)(t-z_k)}\right)\frac{\ps{\gamma e_i,\gamma e_j}}{2(y-x)}\Psi_{i,j}(x,y)\right]\\
			&-\Is\times\It\left[\frac{\ps{\beta,\gamma e_i}}{2(t-x)(t-\bar x)}\mathds 1_{x\in\Heps}\left(\frac{\ps{\gamma e_i,\gamma e_j}}{2(y-x)}+\frac{\ps{\gamma e_i,\gamma e_j}}{2(y-\bar x)}\right)\Psi_{i,j}(x,y)\right]
		\end{align*}
		and that $I_3$ simplifies to
		\begin{align*}
			I_3&=\text{$(P)$-class terms }+\Is\times\It^2\left[\frac{\ps{\beta,\gamma e_j}\ps{\gamma e_i,\gamma e_f}}{4(t-x)(t-y)(z-x)}\Psi_{i,j,f}(x,y,z)\right]\\
			&+\Is^2\times\Ir\left[\frac{\ps{\beta,\gamma e_i}\ps{\gamma e_j,\gamma e_f}}{4(t-x)(t-z)(y-z)}\Psi_{i,j,f}(x,y,z)\right]\\
			&-\Is\times\Ir^2\left[\frac{\ps{\beta,\gamma e_i}\ps{\gamma e_j,\gamma e_f}}{8(t-x)(t-y)(t-z)}\Psi_{i,j,f}(x,y,z)\right].
		\end{align*}
		As a consequence we are led to the equality
		\begin{align*}
			&\ps{\L_{-1,2}V_\beta(t)\V}_{\delta,\eps,\rho}=\text{$(P)$-class terms }+\Is\left[\partial_x\left(H_i(x)+\frac{1}{x-t}G_i(x)\right)\right]\\
			&+\Is\left[\left(\partial_x+\partial_{\bar x}\right)\left(\frac{\ps{\beta,\gamma e_i}}{2(t-x)(t-\bar x)}\mathds 1_{x\in\Heps}\Psi_i(x)\right)\right]\\
			&+\Is\left[\partial_x\left(\frac{1+\frac{\ps{\beta,\gamma e_i}}2}{(t-x)^2}\Psi_{i}(x)\right)\right]-\Is\times\Is\left[\partial_x\left(\frac{\ps{\beta,\gamma e_j}}{2(t-x)(t-y)}\Psi_{i,j}(x,y)\right)\right],
		\end{align*}
		allowing to recover the desired expression for the remainder term: 
		\begin{equation*}
			\begin{split}
				&\mathfrak L_{-(1,2);\delta,\eps,\rho}(\bm\alpha)=\Is\left[\partial_x\left(H_i(x)+\frac{1}{x-t}G_i(x)\right)\right]\\
				&+\Is\left[\partial_x\left(\frac{1+\frac{\ps{\beta,\gamma e_i}}2}{(t-x)^2}\Psi_{i}(x)\right)\right]-\Is^2\left[\partial_x\left(\frac{\ps{\beta,\gamma e_j}}{2(t-x)(t-y)}\Psi_{i,j}(x,y)\right)\right].
			\end{split}
		\end{equation*}
		
		As for the second part of the statement, we see from its expression that for $\ps{\beta,e_i}$ negative enough the remainder term $\tilde{\mathfrak L}_{-(1,2);\delta,\eps,\rho}(\bm\alpha)$ defined from Equation~\eqref{eq:rem_L12} goes to zero. Moreover in that case the equality is seen to hold in the sense of weak derivatives, see \textit{e.g.}~\cite[Equation (5.7)]{Toda_OPEWV}. We can conclude like in the proof of Proposition~\ref{prop:L11_der}.
	\end{proof}
	
	\subsubsection{The $\L_{-1,1,1}$ descendant}
	We now turn to the $\L_{-1,1,1}$ descendant, for which the analog statement is the following:
	\begin{lemma}\label{lemma:def_L111}
		One can find a quantity $\mathfrak L_{-(1,1,1);\delta,\eps,\rho}(\bm\alpha)$ for which $\ps{\L_{-1,1,1}V_\beta(t)\V}_{\delta,\eps,\rho}-\mathfrak L_{-(1,1,1);\delta,\eps,\rho}(\bm\alpha)$ is $(P)$-class. The $\L_{-1,1,1}V_\beta$ descendant defined for $t\in\R$ and $(\bm\alpha,\beta)\in\mc A_{N,M+1}$ via
		\begin{equation}
			\ps{\L_{-1,1,1}V_\beta(t)\V}\coloneqq\lim\limits_{\delta,\eps,\rho\to0}\ps{\L_{-1,1,1}V_\beta(t)\V}_{\delta,\eps,\rho}-\tilde{\mathfrak L}_{-(1,1,1);\delta,\eps,\rho}(\bm\alpha)
		\end{equation}
		satisfies the property that, in the weak sense,
		\begin{equation}
			\ps{\L_{-1,1,1}V_\beta(t)\V}=\partial_t^3\ps{V_\beta(t)\V}.
		\end{equation}
	\end{lemma}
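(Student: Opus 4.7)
The proof would follow the same template used for $\L_{-(1,1)}V_\beta$ in Lemma~\ref{lemma:desc11_L} and $\L_{-(1,2)}V_\beta$ in Lemma~\ref{lemma:def_L12}, but with an extra layer of Gaussian integration by parts to handle the cubic term in the defining multilinear form $\L_{-(1,1,1)}^\beta[\Phi]=\ps{\beta,\partial^3\Phi}+3\ps{\beta,\partial^2\Phi}\ps{\beta,\partial\Phi}+(\ps{\beta,\partial\Phi})^3$. I would start from the defining expression~\eqref{eq:desc_reg} and apply Lemma~\ref{lemma:GaussianIPP} iteratively: once on the first summand, twice on the second, and three times on the cubic one. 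This rewrites $\ps{\L_{-1,1,1}V_\beta(t)\V}_{\delta,\eps,\rho}$ as a sum of $p$-fold integrals $\It^p$ (with $p\leq 3$) against $\Psi_{i_1,\dots,i_p}$, whose kernels are built out of factors of $1/(t-x_k)^{n_k}$, $1/(z_k-t)^{m_k}$ and $1/(\bar x_k-x_k)$. I would then restrict attention to the $\Is^p$ components, since the $\Ir$ contributions are manifestly $(P)$-class by the fusion estimates of Lemma~\ref{lemma:fusion}.

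The algebraic heart of the argument is to reorganize the singular part into a sum of total $\partial_x$-derivatives
\begin{equation*}
    \mathfrak L_{-(1,1,1);\delta,\eps,\rho}(\bm\alpha)=\Is[\partial_x(\cdots)]+\Is^2[\partial_x(\cdots)]+\Is^3[\partial_x(\cdots)]
\end{equation*}
up to $(P)$-class terms. The key ingredients are the Wick-reordering identities analogous to those used for $\L_{-(1,1)}V_\beta$, which transform combinations of $1/(t-x)^k$ and collision kernels into $\partial_x$ of lower-order kernels, at the cost of introducing shifts of the weight $\beta\to 2\beta+\gamma e_i$ at the first layer and $\beta\to 3\beta+2\gamma e_i$ at the deeper one. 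Once this rearrangement is performed I would apply Stokes' formula exactly as in the treatment of $\L_{-(1,2)}V_\beta$ to each $\partial_x$-integrand in order to produce the boundary form $\tilde{\mathfrak L}_{-(1,1,1);\delta,\eps,\rho}(\bm\alpha)$. The discrepancy between the contour integrals over $\Heps\cap\partial B(t,r)$ and the boundary evaluations at $t\pm\eps$ should be $(P)$-class, since one can expand the residual $\partial_y$ (and $\partial_z$) derivatives to cancel the $1/(t-y)$ and $1/(t-z)$ singularities, exactly as in the last step of the proof of Lemma~\ref{lemma:desc11_L}.

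For the derivative identity $\ps{\L_{-1,1,1}V_\beta(t)\V}=\partial_t^3\ps{V_\beta(t)\V}$ I would mimic the argument of Proposition~\ref{prop:L11_der}. Testing against a smooth compactly supported function $f\in C_c^\infty(\R)$, three integrations by parts give at the regularized level $\int_\R f(t)\ps{\L_{-1,1,1}V_\beta(t)\V}_{\delta,\eps,\rho}\,dt=-\int_\R f'''(t)\ps{V_\beta(t)\V}_{\delta,\eps,\rho}\,dt$. For $\ps{\beta,e_i}$ sufficiently negative on $i=1,2$, the fusion asymptotics of Lemma~\ref{lemma:fusion} force $\tilde{\mathfrak L}_{-(1,1,1);\delta,\eps,\rho}(\bm\alpha)\to 0$ in the limit $\rho,\eps,\delta\to 0$, so the identity passes to the limit in the weak sense; analyticity in a complex neighborhood of $\mc A_{N,M+1}$ then extends it to the full parameter range.

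The main obstacle I expect is the sheer combinatorial complexity of the iterated Gaussian IBP: three applications generate substantially more terms than in the cases already treated, with singular kernels of type $1/(t-x)^3$, $1/(t-x)^2(t-y)$ and $1/(t-x)(t-y)(t-z)$, and a careful matching of the numerical coefficients coming from the factor $3$ in front of $\ps{\beta,\partial^2\Phi}\ps{\beta,\partial\Phi}$ and from the cubic term is required to identify the correct total derivative structure. A secondary technical issue is the new singularity at $y=z$ which appears in the $\Is^3$ integrand after Wick reordering: when the associated Lie-algebra indices coincide it will be absorbed by a symmetrization argument in $y,z$, and otherwise handled by Lemma~\ref{lemma:fusion_integrability}.
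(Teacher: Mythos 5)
Your proposal follows essentially the same route as the paper: iterated Gaussian integration by parts to expand the cubic form into one-, two- and three-fold integrals, reorganization of the singular kernels into total $\partial_x$-derivatives (the paper additionally splits the argument into the case $\mu_{B,1}=\mu_{B,2}=0$ and the general case to manage the $1/(\bar x-x)$ terms), Stokes' formula to extract $\tilde{\mathfrak L}_{-(1,1,1);\delta,\eps,\rho}$, symmetrization plus the fusion estimates for the $1/(x-y)$ collisions, and the same duality-plus-analyticity argument as in Proposition~\ref{prop:L11_der} for the weak identity $\partial_t^3$. The only detail to adjust is the weight shift: with three factors of $\beta$ the reordering produces charges $3\beta+\gamma e_i$ (and $3\beta+2\gamma e_i$), not $2\beta+\gamma e_i$, but this is exactly the coefficient bookkeeping you already flag as the main technical burden.
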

	Along the proof of Lemma~\ref{lemma:def_L111}, we will provide an explicit expression for this remainder term. Namely we will show that it is given by
	\begin{eqs}\label{eq:L111}
		&\mathfrak L_{-(1,1,1);\delta,\eps,\rho}(\bm\alpha)=\Is\left[\partial_x\left(\frac{\ps{\beta,\gamma e_i}}{2(t-x)}\theta_i(x)+\vartheta_i(x)\right)\right]\\
		&+\Is\left[\partial_x\left(\frac{\ps{\beta,\gamma e_i}}{2(t-\bar x)}\Theta_i(x)\indeps-\Is\left[\frac{\ps{\beta,\gamma e_j}}{2(t-y)}\Theta_{i,j}(x,y)\right]\right)\right]\\
		&+A_{\delta,\eps,\rho}+B_{\delta,\eps,\rho}+C_{\delta,\eps,\rho},
	\end{eqs}
	\begin{eqs}
		&A_{\delta,\eps,\rho}\coloneqq\Is\left[\partial_x\left(\frac{\ps{\beta,\gamma e_i}\left(1+\frac{\ps{\beta,\gamma e_i}}2\right)}{2(t-x)^2}\Psi_i(x)\right)\right]
	\end{eqs}
	\begin{eqs}
		&B_{\delta,\eps,\rho}\coloneqq-\Is^2\left[\partial_{x}\left(\left(\frac{\ps{\beta,\gamma e_j}\left(1+\frac{\ps{\beta,\gamma e_j}}2\right)}{2(t-y)^2}+\frac{\ps{\beta,\gamma e_i}\ps{\beta,\gamma e_j}}{2(t-x)(t-y)}\right)\Psi_{i,j}(x,y)\right)\right]\\
		&+\Is\left[\partial_{x}\left(\left(\frac{\ps{\beta,\gamma e_j}\left(1+\frac{\ps{\beta,\gamma e_j}}2\right)}{2(t-\bar x)}+\frac{\ps{\beta,\gamma e_i}\ps{\beta,\gamma e_j}}{2(t-x)(t-\bar x)}\right)\Psi_{i}(x)\right)\right]
	\end{eqs}
	\begin{eqs}
		&C_{\delta,\eps,\rho}\coloneqq\Is^3\left[\partial_{x}\left(\left(\frac{\ps{\beta,\gamma e_j}\ps{\beta,\gamma e_f}}{4(t-y)(t-z)}\right)\Psi_{i,j,f}(x,y,z)\right)\right]\\
		&-\Is^2\left[\partial_{x}\left(\left(\frac{\ps{\beta,\gamma e_i}\ps{\beta,\gamma e_j}}{2(t-\bar x)(t-y)}\right)\Psi_{i,j}(x,y)\right)\indeps\right]\\
		&-\Is^2\left[\partial_{x}\left(\left(\frac{\ps{\beta,\gamma e_j}^2}{4(t-y)(t-\bar y)}\right)\Psi_{i,j}(x,y)\right)\mathds 1_{y\in\Heps}\right]
	\end{eqs}
	where $\vartheta_i$ is explicit but rather tedious to write (we provide its expression in Equation~\eqref{eq:toolongtowrite} below), while $\theta_i$ and $\Theta_{i,j}$ are given by
	\begin{eqs}
		&\theta_{i}(x)\coloneqq \sum_{k=1}^{2N+M}\left(\frac{\ps{3\beta+\gamma e_i,\alpha_k}}{2(t-z_k)}+\frac{\ps{\gamma e_i,\alpha_k}}{2(\bar x-z_k)}\indeps\right)\Psi_i(x)\\
		&-\Ir\left[\left(\frac{\ps{3\beta+\gamma e_i,\gamma e_j}}{2(t-y)}+\frac{\ps{\gamma e_i,\gamma e_j}}{2(\bar x-y)}\indeps\right)\Psi_{i,j}(x,y)\right]\\
		&-\Is\left[ \sum_{k=1}^{2N+M}\frac{\ps{\gamma e_j,\alpha_k}}{2(y-z_k)}\Psi_{i,j}(x,y)\right]+\Is\times\Ir\left[\frac{\ps{\gamma e_j,\gamma e_f}}{2(y-z)}\Psi_{i,j,f}(x,y,z)\right],
	\end{eqs}
	\begin{eqs}\label{eq:Theta}
		&\Theta_{i,j}(x,y)\coloneqq \sum_{k=1}^{2N+M}\left(\frac{\ps{3\beta+\gamma e_j,\alpha_k}}{2(t-z_k)}+\frac{\ps{\gamma e_i,\alpha_k}}{2(x-z_k)}\right)\Psi_{i,j}(x,y)\\
		&-\Ir\left[\left(\frac{\ps{3\beta+\gamma e_j,\gamma e_f}}{2(t-z)}+\frac{\ps{\gamma e_i,\gamma e_f}}{2(x-z)}\right)\Psi_{i,j,f}(x,y,z)\right]\\
		&-\Is\left[ \sum_{k=1}^{2N+M}\frac{\ps{\gamma e_f,\alpha_k}}{2(z-z_k)}\Psi_{i,j,f}(x,y,z)\right]+\Is\times\Ir\left[\frac{\ps{\gamma e_f,\gamma e_m}}{2(z-w)}\Psi_{i,j,f,m}(x,y,z,w)\right].
	\end{eqs}
	Finally $\Theta_i(x)$ is defined via the same expression as $\Theta_{i,j}(x,y)$ but with $\Psi_{i,j}(x,y)$ being replaced by $\Psi_{i}(x)$. 
	\begin{proof}
		Like for the previous lemma we only sketch the main steps and leave the details of the computations to the (very motivated!) reader. We can write the potentially singular terms in the expression of $\ps{\L_{-1,1,1}V_\beta(t)\V}_{\delta,\eps,\rho}$ as $I_1+I_2+I_3$, with
		\begin{align*}
			I_1\coloneqq &\Is\left[\left(\frac{\ps{\beta,\gamma e_i}\left(1+\frac{\ps{\beta,\gamma e_i}}2\right)\left(2+\frac{\ps{\beta,\gamma e_i}}2\right)}{2(t-x)^3}+\sum_{k=1}^{2N+M}\frac{3\ps{\beta,\gamma e_i}\left(1+\frac{\ps{\beta,\gamma e_i}}2\right)\ps{\beta,\alpha_k}}{4(t-x)^2(t-z_k)}\right)\Psi_i(x)\right]\\
			+&\Is\left[\frac{3\ps{\beta,\gamma e_i}}{2(t-x)}\left(\sum_{k=1}^{2N+M}\frac{\ps{\beta,\alpha_k}}{2(t-z_k)^2}+\sum_{k,l=1}^{2N+M}\frac{\ps{\beta,\alpha_k}\ps{\beta,\alpha_l}}{4(t-z_k)(t-z_l)}\right)\Psi_i(x)\right]\\
			+&\Is\left[\mathds 1_{x\in\Heps}\frac{3\ps{\beta,\gamma e_i}^2}{4(t-x)(t-\bar x)}\left(\frac{1+\frac{\ps{\beta,\gamma e_i}}2}{t-x}+\sum_{k=1}^{2N+M}\frac{\ps{\beta,\alpha_k}}{2(t-z_k)}\right)\Psi_i(x)\right],
		\end{align*}
		\begin{align*}
			I_2\coloneqq &-\It^2\left[\left(\frac{3\ps{\beta,\gamma e_i}\left(1+\frac{\ps{\beta,\gamma e_i}}2\right)\ps{\beta,\gamma e_j}}{4(t-x)^2(t-y)}+\frac{3\ps{\beta,\gamma e_i}\ps{\beta,\gamma e_j}\ps{\beta,\alpha_k}}{8(t-x)(t-y)(t-z_k)}\right)\Psi_{i,j}(x,y)\right]\\
			&-\It^2\left[\frac{3\ps{\beta,\gamma e_i}^2\ps{\beta,\gamma e_j}}{4(t-x)(t-\bar x)(t-y)}\mathds 1_{x\in\Heps}\Psi_{i,j}(x,y)\right],\qt{and}
		\end{align*}
		\begin{align*}
			I_3\coloneqq \It^3\left[\frac{\ps{\beta,\gamma e_i}\ps{\beta,\gamma e_j}\ps{\beta,\gamma e_f}}{8(t-x)(t-y)(t-z)}\Psi_{i,j,f}(x,y,z)\right].
		\end{align*}
		Our goal is, like before, to rewrite the latter to make appear derivatives of the correlation functions. And to do so we rely on the same techniques as above.
		
		For the sake of simplicity let us first assume that $\mu_{B,1}=\mu_{B,2}=0$ so that we can discard terms with $\bar x$. Then using symmetrization $I_1$ is given by $I_1=\text{$(P)$-class terms }+I_1^1$ where
		\begin{align*}
			&I^1_1\coloneqq\Is\left[\frac{\ps{\beta,\gamma e_i}\left(1+\frac{\ps{\beta,\gamma e_i}}2\right)}{2(t-x)^2}\left(\frac{2+\frac{\ps{\beta,\gamma e_i}}2}{t-x}+\frac{\gamma^2\mathds 1_{x\in\Heps}}{\bar x-x}+\sum_{k=1}^{2N+M}\frac{\ps{\gamma e_i,\alpha_k}}{2(z_k-x)}\right)\Psi_i(x)\right]\\
			&+\Is\left[\frac{\ps{\beta,\gamma e_i}}{2(t-x)}\left(\frac{1+\frac{\ps{\beta,\gamma e_i}}2}{t-x}+\frac{\gamma^2\mathds 1_{x\in\Heps}}{\bar x-x}+\sum_{l=1}^{2N+M}\frac{\ps{\gamma e_i,\alpha_l}}{2(z_l-x)}\right)\theta_i^1(x)\right]+\Is\Big[\frac{\ps{\beta,\gamma e_i}}{2(t-x)}\vartheta_i^1(x)\Big]\\
			&\qt{with}\theta_i^1(x)\coloneqq \sum_{k=1}^{2N+M}\frac{\ps{3\beta+\gamma e_i,\alpha_k}}{2(t-z_k)}\Psi_i(x)\qt{and}\\
			&\vartheta_i^1(x)\coloneqq \left(\sum_{k=1}^{2N+M}\frac{\ps{3\beta+(1+\frac{\ps{\beta,\gamma e_i}}{2})\gamma e_i,\alpha_k}}{2(t-z_k)^2}+\sum_{k,l=1}^{2N+M}\frac{3\ps{\beta,\alpha_k}\ps{\beta,\alpha_l}+\ps{3\beta+\gamma e_i,\alpha_k}\ps{\gamma e_i,\alpha_l}}{4(t-z_k)(t-z_l)}\right)\Psi_i(x).
		\end{align*}
		Based on the same reasoning as in the previous sections we thus end up with
		\begin{align*}
			I_1&=\text{$(P)$-class terms}+R_1^1\\
			&+\Is\left[\partial_x\left(\frac{\ps{\beta,\gamma e_i}\left(1+\frac{\ps{\beta,\gamma e_i}}2\right)}{2(t-x)^2}\Psi_i(x)+\frac{\ps{\beta,\gamma e_i}}{2(t-x)}\theta_i^1(x)+\vartheta_i^1(x)\right)\right]\qt{where we have set}
		\end{align*}
		\begin{align*}
			&R_1^1\coloneqq-\Is\times\It\left[\left(\frac{\ps{\beta,\gamma e_i}\left(1+\frac{\ps{\beta,\gamma e_i}}2\right)}{2(t-x)^2}+\sum_{k=1}^{2N+M}\frac{\ps{\beta,\gamma e_i}\ps{3\beta+\gamma e_i,\alpha_k}}{4(t-x)(t-z_k)}\right)\frac{\ps{\gamma e_i,\gamma e_j}}{2(x-y)}\Psi_{i,j}(x,y)\right].
		\end{align*}
		
		We have thus treated the one-fold integral: let us now turn to the two-fold ones. And to start with, we first rewrite $I_2$ as 
		\begin{align*}
			I_2&=\text{$(P)$-class terms }\\
			&-\Is^2\left[\left(\frac{3\ps{\beta,\gamma e_i}\left(1+\frac{\ps{\beta,\gamma e_i}}2\right)\ps{\beta,\gamma e_j}}{4(t-x)^2(t-y)}+\frac{3\ps{\beta,\gamma e_i}\ps{\beta,\gamma e_j}\ps{\beta,\alpha_k}}{8(t-x)(t-y)(t-z_k)}\right)\Psi_{i,j}(x,y)\right]\\
			&-\Is\times\Ir\left[\left(\frac{3\ps{\beta,\gamma e_i}\left(1+\frac{\ps{\beta,\gamma e_i}}2\right)\ps{\beta,\gamma e_j}}{4(t-x)^2(t-y)}\right)\Psi_{i,j}(x,y)\right]\\
			&-\Is\times\Ir\left[\left(\frac{3\ps{\beta,\gamma e_i}\ps{\beta,\gamma e_j}\left(1+\frac{\ps{\beta,\gamma e_j}}2\right)}{4(t-x)(t-y)^2}+\sum_{k=1}^{2N+M}\frac{3\ps{\beta,\gamma e_i}\ps{\beta,\gamma e_j}\ps{\beta,\alpha_k}}{8(t-x)(t-y)(t-z_k)}\right)\Psi_{i,j}(x,y)\right]
		\end{align*}
		Then using symmetrization and expressing the above in terms of derivatives like before some tedious computations show that
		\begin{align*}
			I_2+R_1^1=\text{$(P)$-class terms }+I_2^1+I_2^2+R_2^1+R_2^2,\qt{where:}
		\end{align*}
		\begin{align*}
			&I_2^1\coloneqq-\Is^2\left[\partial_{x}\left(\left(\frac{\ps{\beta,\gamma e_j}(1+\frac{\ps{\beta,\gamma e_j}}2)}{2(t-y)^2}+\frac{\ps{\beta,\gamma e_i}\ps{\beta,\gamma e_j}}{2(t-x)(t-y)}\right)\Psi_{i,j}(x,y)\right)\right]\\
			&-\Is^2\left[\partial_{x}\left(\sum_{k=1}^{2N+M}\left(\frac{\ps{\beta,\gamma e_i}\ps{\gamma e_j,\alpha_k}}{4(t-x)(y-z_k)}+\frac{\ps{\beta,\gamma e_j}\ps{3\beta+\gamma e_j,\alpha_k}}{4(t-y)(t-z_k)}+\frac{\ps{\beta,\gamma e_j}\ps{\gamma e_i,\alpha_k}}{4(t-y)(x-z_k)}\right)\Psi_{i,j}(x,y)\right)\right]\\
			&-\Is^2\left[\partial_{x}\left(\vartheta_{i,j}^1(x,y)\right)\right]\qt{with}
		\end{align*}
		\begin{align*}
			\vartheta_{i,j}^1(x,y)&\coloneqq \sum_{k=1}^{2N+M}\left(\frac{\ps{\gamma e_j,\alpha_k}}{4(y-z_k)^2}+\frac{\ps{\beta,\gamma e_j}\ps{\gamma e_j,\alpha_k}}{4(t-z_k)(y-z_k)}\right)\Psi_{i,j}(x,y)\\
			&+\sum_{k,l=1}^{2N+M}\left(\frac{\ps{\gamma e_j,\alpha_k}\ps{\gamma e_j,\alpha_l}}{4(y-z_k)(y-z_l)}+\frac{\ps{3\beta+2\gamma e_i,\alpha_k}\ps{\gamma e_j,\alpha_l}}{4(t-z_k)(y-z_l)}\right)\Psi_{i,j}(x,y)\qt{and}
		\end{align*}
		\begin{align*}
			&I_2^2\coloneqq -\Is\times\Ir\left[\partial_x\left(\left(\frac{\ps{\beta,\gamma e_i}\ps{3\beta+\gamma e_i,\gamma e_j}}{4(t-x)(t-y)}+\vartheta_{i,j}^2(x,y)\right)\Psi_{i,j}(x,y)\right)\right]\qt{with}
		\end{align*}
		\begin{align*}
			&\vartheta_{i,j}^2(x,y)\coloneqq\left(\frac{\ps{3\beta+\gamma e_i,\gamma e_j}\left(1+\frac{\ps{\beta+\gamma e_i,\gamma e_j}}2\right)}{2(t-y)^2}\right.\\
			&\left.+\sum_{k=1}^{2N+M}\frac{\ps{3\beta+\gamma e_i,\gamma e_j}\ps{\gamma e_i,\alpha_k}+\ps{3\beta+\gamma e_i,\alpha_k}\ps{\gamma e_i,\gamma e_j}+6\ps{\beta,\gamma e_j}\ps{\beta,\alpha_k}}{4(t-y)(t-z_k)}\right)\Psi_{i,j}(x,y).
		\end{align*}
		The extra terms $R_2^1$ and $R_2^2$ are equal to 
		\begin{align*}
			&R_2^1\coloneqq\Is^2\times\It\left[\left(\frac{\ps{\beta,\gamma e_j}(1+\frac{\ps{\beta,\gamma e_j}}2)}{2(t-y)^2}+\frac{\ps{\beta,\gamma e_i}\ps{\beta,\gamma e_j}}{2(t-x)(t-y)}\right)\frac{\ps{\gamma e_i,\gamma e_f}}{2(x-z)}\Psi_{i,j,f}(x,y,z)\right]\\
			&+\Is^2\times\It\left[\sum_{k=1}^{2N+M}\left(\frac{\ps{\beta,\gamma e_i}\ps{\gamma e_j,\alpha_k}}{4(t-x)(y-z_k)}\right)\frac{\ps{\gamma e_i,\gamma e_f}}{2(x-z)}\Psi_{i,j,f}(x,y,z)\right]\\
			&+\Is^2\times\It\left[\sum_{k=1}^{2N+M}\left(\frac{\ps{\beta,\gamma e_j}\ps{3\beta+\gamma e_j,\alpha_k}}{4(t-y)(t-z_k)}+\frac{\ps{\beta,\gamma e_j}\ps{\gamma e_i,\alpha_k}}{4(t-y)(x-z_k)}\right)\frac{\ps{\gamma e_i,\gamma e_f}}{2(x-z)}\Psi_{i,j,f}(x,y,z)\right]
		\end{align*}
		\begin{align*}
			&R_2^2\coloneqq \Is\times\Ir\times\It\left[\left(\frac{\ps{\beta,\gamma e_i}\ps{3\beta+\gamma e_i,\gamma e_j}}{4(t-x)(t-y)}\right)\frac{\ps{\gamma e_i,\gamma e_f}}{2(x-z)}\Psi_{i,j,f}(x,y,z)\right].
		\end{align*}
		An important point to highlight is that in the $(P)$-class terms we have included expressions of the form $-\Is^2\left[\frac{1}{x-y}f_{i,j}(x,y)\Psi_{i,j}(x,y)\right]$ where $f_{i,j}$ is regular at $x=y$, $x=t$ and $y=t$. At first sight it may not be clear why the singularity $\frac1{x-y}$ would be integrable. However we can separate between the case where $i\neq j$, for which $\Psi_{i,j}(x,y)\simeq \norm{x-y}^{\gamma^2}$ as $\norm{x-y}\to0$, and when $i=j$ in which case by symmetry in $x,y$ the latter can be reduced to a regular quantity. This allows to treat the two-fold integrals and discard such terms.
		
		Now to transform the three-fold integrals we first split $I_3$ as $I_3=\text{$(P)$-class terms}+I_3^1$ with
		\begin{align*}
			I_3^1&\coloneqq\Is^3\left[\frac{\ps{\beta,\gamma e_i}\ps{\beta,\gamma e_j}\ps{\beta,\gamma e_f}}{8(t-x)(t-y)(t-z)}\Psi_{i,j,f}(x,y,z)\right]\\
			&+\Is^2\times\Ir\left[\frac{3\ps{\beta,\gamma e_i}\ps{\beta,\gamma e_j}\ps{\beta,\gamma e_f}}{8(t-x)(t-y)(t-z)}\Psi_{i,j,f}(x,y,z)\right]\\
			&+\Is\times\Ir^2\left[\frac{3\ps{\beta,\gamma e_i}\ps{\beta,\gamma e_j}\ps{\beta,\gamma e_f}}{8(t-x)(t-y)(t-z)}\Psi_{i,j,f}(x,y,z)\right]
		\end{align*}
		Likewise we see that $R_2^1+R_2^2=I_{s,s,s}+I_{s,s,r}+I_{s,r,r}$ where
		\begin{align*}
			I_{s,s,s}\coloneqq&\Is^3\left[\left(\frac{\ps{\beta,\gamma e_j}(1+\frac{\ps{\beta,\gamma e_j}}2)}{2(t-y)^2}+\frac{\ps{\beta,\gamma e_i}\ps{\beta,\gamma e_j}}{2(t-x)(t-y)}\right)\frac{\ps{\gamma e_i,\gamma e_f}}{2(x-z)}\Psi_{i,j,f}(x,y,z)\right]\\
			&+\Is^3\left[\sum_{k=1}^{2N+M}\left(\frac{\ps{\beta,\gamma e_i}\ps{\gamma e_j,\alpha_k}}{4(t-x)(y-z_k)}\right)\frac{\ps{\gamma e_i,\gamma e_f}}{2(x-z)}\Psi_{i,j,f}(x,y,z)\right]\\
			&+\Is^3\left[\sum_{k=1}^{2N+M}\left(\frac{\ps{\beta,\gamma e_j}\ps{3\beta+\gamma e_j,\alpha_k}}{4(t-y)(t-z_k)}+\frac{\ps{\beta,\gamma e_j}\ps{\gamma e_i,\alpha_k}}{4(t-y)(x-z_k)}\right)\frac{\ps{\gamma e_i,\gamma e_f}}{2(x-z)}\Psi_{i,j,f}(x,y,z)\right]
		\end{align*}
		\begin{align*}
			I_{s,s,r}\coloneqq&\Is^2\times\Ir\left[\left(\frac{\ps{\beta,\gamma e_j}(1+\frac{\ps{\beta,\gamma e_j}}2)}{2(t-y)^2}+\frac{\ps{\beta,\gamma e_i}\ps{\beta,\gamma e_j}}{2(t-x)(t-y)}\right)\frac{\ps{\gamma e_i,\gamma e_f}}{2(x-z)}\Psi_{i,j,f}(x,y,z)\right]\\
			&+\Is^2\times\Ir\left[\left(\frac{\ps{\beta,\gamma e_i}\ps{3\beta+\gamma e_i,\gamma e_f}}{4(t-x)(t-z)}\right)\frac{\ps{\gamma e_i,\gamma e_j}}{2(x-y)}\Psi_{i,j,f}(x,y,z)\right]\\
			&+\Is^2\times\Ir\left[\sum_{k=1}^{2N+M}\left(\frac{\ps{\beta,\gamma e_j}\ps{3\beta+\gamma e_j,\alpha_k}}{4(t-y)(t-z_k)}+\frac{\ps{\beta,\gamma e_j}\ps{\gamma e_i,\alpha_k}}{4(t-y)(x-z_k)}\right)\frac{\ps{\gamma e_i,\gamma e_f}}{2(x-z)}\Psi_{i,j,f}(x,y,z)\right]\\
			&+\Is^2\times\Ir\left[\sum_{k=1}^{2N+M}\left(\frac{\ps{\beta,\gamma e_i}\ps{\gamma e_j,\alpha_k}}{4(t-x)(y-z_k)}\right)\frac{\ps{\gamma e_i,\gamma e_f}}{2(x-z)}\Psi_{i,j,f}(x,y,z)\right].
		\end{align*}
		\begin{align*}
			I_{s,r,r}\coloneqq&\Is\times\Ir^2\left[\left(\frac{\ps{\beta,\gamma e_i}\ps{3\beta+\gamma e_i,\gamma e_j}}{4(t-x)(t-y)}\right)\frac{\ps{\gamma e_i,\gamma e_f}}{2(x-z)}\Psi_{i,j,f}(x,y,z)\right].
		\end{align*}
		Gathering these terms together with that coming from $I_3$ we arrive to the equality $$I_3+R_2^1+R_2^2=\text{$(P)$-class terms }+I_3^1+I_3^2+I_3^3+R_3^1+R_3^2,\qt{with}$$
		\begin{align*}
			I_3^1=\Is^3\left[\partial_x\left(\left(\frac{\ps{\beta,\gamma e_j}\ps{\beta,\gamma e_f}}{4(t-y)(t-z)}+\sum_{k=1}^{2N+M}\frac{\ps{\beta,\gamma e_j}\ps{\gamma e_f,\alpha_k}}{4(t-y)(z-z_k)}+\sum_{k,l=1}^{2N+M}\frac{\ps{\gamma e_j,\alpha_k}\ps{\gamma e_f,\alpha_l}}{4(y-z_k)(z-z_l)}\right)\Psi_{i,j,f}(x,y,z)\right)\right];
		\end{align*}
		\begin{align*}
			I_3^2&=\Is^2\times\Ir\left[\partial_x\left(\left(\frac{\ps{\beta,\gamma e_i}\ps{\gamma e_j,\gamma e_f}}{4(t-x)(y-z)}+\vartheta_{i,j,f}(x,y,z)\right)\Psi_{i,j,f}(x,y,z)\right)\right]\\
			&+\Is^2\times\Ir\left[\partial_x\left(\left(\frac{\ps{\beta,\gamma e_j}\ps{3\beta+\gamma e_j,\gamma e_f}}{4(t-y)(t-z)}+\frac{\ps{\beta,\gamma e_j}\ps{\gamma e_i,\gamma e_f}}{4(t-y)(x-z)}\right)\Psi_{i,j,f}(x,y,z)\right)\right];
		\end{align*}
		\begin{align*}
			I_3^3=\Is\times\Ir^2\left[\partial_x\left(\frac{\ps{3\beta+\gamma e_i,\gamma e_j}\ps{\gamma e_j,\gamma e_f}+3\ps{\beta,\gamma e_j}\ps{\beta,\gamma e_f}}{4(t-y)(t-z)}\Psi_{i,j,f}(x,y,z)\right)\right].
		\end{align*}
		In the above $\vartheta_{i,j,f}(x,y,z)$ is defined by
		\begin{eqs}
			&\vartheta_{i,j,f}(x,y,z)\coloneqq\sum_{k=1}^{2N+M}\left(\frac{\ps{3\beta+\gamma e_i,\alpha_k}}{2(t-z_k)}+\frac{\ps{\gamma e_j,\alpha_k}}{2(x-z_k)}\right)\frac{\ps{\gamma e_j,\gamma e_f}}{2(y-z)}+\frac{\ps{\gamma e_j,\alpha_k}\ps{\gamma e_i,\gamma e_f}}{4(y-z_k)(x-z)}\\
			&+\sum_{k=1}^{2N+M}\left(\frac{\ps{3\beta+\gamma e_i,\gamma e_f}}{2(t-z)}+\frac{\ps{\gamma e_j,\gamma e_f}}{2(x-z)}\right)\frac{\ps{\gamma e_j,\alpha_k}}{2(y-z_k)}+\frac{\ps{\gamma e_j,\gamma e_f}\ps{\gamma e_i,\alpha_k}}{4(y-z)(x-z_k)}\\
			&+\left(\frac{\ps{3\beta+\gamma e_i,\gamma e_f}+\ps{\beta,\gamma e_j}}{2(t-z)}+\frac{1+\ps{\gamma e_j,\gamma e_f}}{2(y-z)}\right)\frac{\ps{\gamma e_j,\gamma e_f}}{2(y-z)}
		\end{eqs}
		while the additional terms are given by 
		\begin{align*}
			R_3^1=-\Is^3\times\It\left[\left(\frac{\ps{\beta,\gamma e_j}\ps{\beta,\gamma e_f}}{4(t-y)(t-z)}+\sum_{k=1}^{2N+M}\frac{\ps{\beta,\gamma e_j}\ps{\gamma e_f,\alpha_k}}{4(t-y)(z-z_k)}\right)\frac{\ps{\gamma e_i,\gamma e_m}}{2(x-w)}\Psi_{i,j,f,m}(x,y,z,w)\right];
		\end{align*}
		\begin{align*}
			R_3^2=-&\Is^2\times\Ir\times\It\left[\frac{\ps{\beta,\gamma e_i}\ps{\gamma e_j,\gamma e_f}\ps{\gamma e_i,\gamma e_m}}{8(t-x)(y-z)(x-w)}\Psi_{i,j,f,m}(x,y,z,w)\right]\\
			-&\Is^2\times\Ir\times\It\left[\frac{\ps{\beta,\gamma e_j}\ps{3\beta+\gamma e_j,\gamma e_f}\ps{\gamma e_i,\gamma e_m}}{8(t-y)(t-z)(x-w)}\Psi_{i,j,f,m}(x,y,z,w)\right]\\
			-&\Is^2\times\Ir\times\It\left[\frac{\ps{\beta,\gamma e_j}\ps{\gamma e_i,\gamma e_f}\ps{\gamma e_i,\gamma e_m}}{8(t-y)(x-z)(x-w)}\Psi_{i,j,f,m}(x,y,z,w)\right].
		\end{align*}
		
		We now have some new four-fold integrals to treat. We see that they give rise to the following remainder terms:
		\begin{align*}
			&-\Is^3\times\Ir\left[\partial_x\left(\left(\frac{\ps{\beta,\gamma e_j}\ps{\gamma e_f,\gamma e_m}}{4(t-y)(z-w)}\Psi_{i,j,f,m}(x,y,z,w)+\vartheta_{i,j,f,m}\right)\right)\right]\\
			&-\Is^2\times\Ir^2\left[\partial_x\left(\left(\frac{\ps{\gamma e_i,\gamma e_f}\ps{\gamma e_j,\gamma e_m}}{4(x-z)(y-w)}+\frac{\ps{3\beta+\gamma e_i+\gamma e_j,\gamma e_f}\ps{\gamma e_j,\gamma e_m}}{4(t-z)(y-w)}\right)\Psi_{i,j,f,m}(x,y,z,w)\right)\right]\\
			&+\Is^3\times\Ir^2\left[\partial_x\left(\frac{\ps{\gamma e_j,\gamma e_n}\ps{\gamma e_f,\gamma e_m}}{4(y-w)(z-\varpi)}\Psi_{i,j,f,m,n}(x,y,z,w,\varpi)\right)\right],
		\end{align*}
		\begin{equation}
			\vartheta_{i,j,f,m}(x,y,z,w)\coloneqq\sum_{k=1}^{2N+M}\frac{\ps{\gamma e_j,\alpha_k}\ps{\gamma e_f,\gamma e_m}}{2(y-z_k)(z-w)}\Psi_{i,j,f,m}(x,y,z,w).
		\end{equation}
		Recollecting all the terms that we have obtained above we recover the expression provided in Equation~\eqref{eq:L111} up to the $\indeps$ term in that we get
		\begin{eqs}\label{eq:L111_mu}
			&\mathfrak L^{\mu_B=0}_{-(1,1,1);\delta,\eps,\rho}(\bm\alpha)=\Is\left[\partial_x\left(\vartheta_i^{\mu_B=0}(x)+\frac{\ps{\beta,\gamma e_i}}{2(t-x)}\theta_i(x)\right)\right]\\
			&-\Is^2\left[\partial_x\left(\frac{\ps{\beta,\gamma e_j}}{2(t-y)}\Theta_{i,j}(x,y)\right)\right]\\
			&+\Is\left[\partial_x\left(\frac{\ps{\beta,\gamma e_i}\left(1+\frac{\ps{\beta,\gamma e_i}}2\right)}{2(t-x)^2}\Psi_i(x)\right)\right]\\
			&-\Is^2\left[\partial_{x}\left(\left(\frac{\ps{\beta,\gamma e_j}\left(1+\frac{\ps{\beta,\gamma e_j}}2\right)}{2(t-y)^2}+\frac{\ps{\beta,\gamma e_i}\ps{\beta,\gamma e_j}}{2(t-x)(t-y)}\right)\Psi_{i,j}(x,y)\right)\right]\\
			&+\Is^3\left[\partial_{x}\left(\left(\frac{\ps{\beta,\gamma e_j}\ps{\beta,\gamma e_f}}{4(t-y)(t-z)}\right)\Psi_{i,j,f}(x,y,z)\right)\right]
		\end{eqs}
		where we see that $\vartheta_i^{\mu_B=0}(x)$ admits the very nice, simple and for sure accurate expression
		\begin{eqs}\label{eq:toolongtowrite}
			\vartheta_i^{\mu_B=0}(x)&\coloneqq \vartheta_i^1(x)-\Is\left[\vartheta_{i,j}^1(x,y)\right]-\Ir\left[\vartheta_{i,j}^2(x,y)\right]+\Is\times\Ir\left[\vartheta_{i,j,f}(x,y,z)\right]\\
			&+\Is^2\left[\sum_{k,l=1}^{2N+M}\frac{\ps{\gamma e_j,\alpha_k}\ps{\gamma e_f,\alpha_l}}{4(y-z_k)(z-z_l)}\Psi_{i,j,f}(x,y,z)\right]\\
			&+\Ir^2\left[\frac{\ps{3\beta+\gamma e_i,\gamma e_j}\ps{\gamma e_j,\gamma e_f}+3\ps{\beta,\gamma e_j}\ps{\beta,\gamma e_f}}{4(t-y)(t-z)}\Psi_{i,j,f}(x,y,z)\right]\\
			&-\Is^2\times\Ir\left[\vartheta_{i,j,f,m}(x,y,z,w)\right]\\
			&-\Is\times\Ir^2\left[\left(\frac{\ps{\gamma e_i,\gamma e_f}\ps{\gamma e_j,\gamma e_m}}{4(x-z)(y-w)}+\frac{\ps{3\beta+\gamma e_i+\gamma e_j,\gamma e_f}\ps{\gamma e_j,\gamma e_m}}{4(t-z)(y-w)}\right)\Psi_{i,j,f,m}(x,y,z,w)\right]\\
			&+\Is^2\times\Ir^2\left[\frac{\ps{\gamma e_j,\gamma e_n}\ps{\gamma e_f,\gamma e_m}}{4(y-w)(z-\varpi)}\Psi_{i,j,f,m,n}(x,y,z,w,\varpi)\right].
		\end{eqs}
		
		Now, let us no longer make the simplifying assumption that $\mu_{B,1}=\mu_{B,2}=0$. Then we get the extra term in $I_1$:
		\begin{align*}
			&\Is\left[\mathds 1_{x\in\Heps}\frac{3\ps{\beta,\gamma e_i}^2}{4(t-x)(t-\bar x)}\left(\frac{1+\frac{\ps{\beta,\gamma e_i}}2}{t-x}+\sum_{k=1}^{2N+M}\frac{\ps{\beta,\alpha_k}}{2(t-z_k)}\right)\Psi_i(x)\right],
		\end{align*}
		and its counterpart for $I_2$:
		\begin{align*}
			&-\Is^2\left[\frac{3\ps{\beta,\gamma e_i}^2\ps{\beta,\gamma e_j}}{4(t-x)(t-\bar x)(t-y)}\mathds 1_{x\in\Heps}\Psi_{i,j}(x,y)\right]\\
			&-\Is\times\Ir\left[\frac{3\ps{\beta,\gamma e_i}^2\ps{\beta,\gamma e_j}}{4(t-x)(t-\bar x)(t-y)}\mathds 1_{x\in\Heps}\Psi_{i,j}(x,y)\right]\\
			&-\Is\times\Ir\left[\frac{3\ps{\beta,\gamma e_i}\ps{\beta,\gamma e_j}^2}{4(t-x)(t-y)(t-\bar y)}\mathds 1_{y\in\Heps}\Psi_{i,j}(x,y)\right].
		\end{align*}
		Moreover the algebraic manipulations that allowed to derive the expression of the remainder term in the case $\mu_{B,1}=\mu_{B,2}=0$ remain valid without this assumption, the main difference lying in the fact that derivatives give rise in addition to a $\frac{\gamma^2}{x-\bar x}$ term. For one-fold integrals this corresponds to the additional term
		\begin{align*}
			&+\Is\left[\left(\frac{\ps{\beta,\gamma e_i}\left(1+\frac{\ps{\beta,\gamma e_i}}2\right)}{2(t-x)^2}\Psi_i(x)+\frac{\ps{\beta,\gamma e_i}}{2(t-x)}\theta_i^1(x)\right)\frac{\gamma^2\indeps}{x-\bar x}\right].
		\end{align*}
		For two-fold integrals this yields the extra term (we have used the symmetry in $x,\bar x$ to cancel some quantities)
		\begin{align*}
			&-\Is^2\left[\left(\frac{\ps{\beta,\gamma e_i}\ps{\beta,\gamma e_j}}{2(t-x)(t-y)}+\sum_{k=1}^{2N+M}\frac{\ps{\beta,\gamma e_i}\ps{\gamma e_j,\alpha_k}}{4(t-x)(y-z_k)}\right)\frac{\gamma^2\indeps}{x-\bar x}\Psi_{i,j}(x,y)\right]\\
			&-\Is\times\Ir\left[\left(\frac{\ps{\beta,\gamma e_i}\ps{3\beta+\gamma e_i,\gamma e_j}}{4(t-x)(t-y)}\right)\frac{\gamma^2\indeps}{x-\bar x}\Psi_{i,j}(x,y)\right].
		\end{align*}
		As for the three-fold integrals, we get the additional quantities
		\begin{align*}
			&\Is^2\times\Ir\left[\frac{\ps{\beta,\gamma e_i}\ps{\gamma e_j,\gamma e_f}\gamma^2}{4(t-x)(y-z)(x-\bar x)}\Psi_{i,j,f,m}(x,y,z,w)\indeps\right]\\
			+&\Is^2\times\Ir\left[\frac{\ps{\beta,\gamma e_j}\ps{\gamma e_i,\gamma e_f}\gamma^2}{4(t-y)(x-z)(x-\bar x)}\Psi_{i,j,f,m}(x,y,z,w)\indeps\right].
		\end{align*}
		The four-fold and five-fold integrals only give rise to $(P)$-class terms.
		
		We now treat successively these integrals. In the same fashion as the integral over $\Is^2$, the one-fold integral can be rewritten as (up to $(P)$-class terms)
		\begin{align*}
			&\Is\left[\partial_{x}\left(\left(\frac{\ps{\beta,\gamma e_i}(1+\frac{\ps{\beta,\gamma e_i}}2)}{2(t-\bar x)^2}+\frac{\ps{\beta,\gamma e_i}^2}{2(t-x)(t-\bar x)}\right)\Psi_{i}(x)\indeps\right)\right]\\
			&+\Is\left[\partial_{x}\left(\sum_{k=1}^{2N+M}\left(\frac{\ps{\beta,\gamma e_i}\ps{\gamma e_i,\alpha_k}}{4(t-x)(\bar x-z_k)}+\frac{\ps{\beta,\gamma e_i}\ps{3\beta+\gamma e_i,\alpha_k}}{4(t-\bar x)(t-z_k)}+\frac{\ps{\beta,\gamma e_i}\ps{\gamma e_i,\alpha_k}}{4(t-\bar x)(x-z_k)}\right)\Psi_{i}(x)\indeps\right)\right]\\
			&-\Is\times\It\left[\left(\frac{\ps{\beta,\gamma e_i}(1+\frac{\ps{\beta,\gamma e_i}}2)}{2(t-\bar x)^2}+\frac{\ps{\beta,\gamma e_i}^2}{2(t-x)(t-\bar x)}\right)\frac{\ps{\gamma e_i,\gamma e_j}}{2(x-y)}\Psi_{i,j}(x,y)\indeps\right]\\
			&-\Is\times\It\left[\sum_{k=1}^{2N+M}\frac{\ps{\beta,\gamma e_i}\ps{\gamma e_i,\alpha_k}}{4(t-x)(\bar x-z_k)}\frac{\ps{\gamma e_i,\gamma e_j}}{2(x-y)}\Psi_{i,j}(x,y)\indeps\right]\\
			&-\Is\times\It\left[\sum_{k=1}^{2N+M}\left(\frac{\ps{\beta,\gamma e_i}\ps{3\beta+\gamma e_i,\alpha_k}}{4(t-\bar x)(t-z_k)}+\frac{\ps{\beta,\gamma e_j}\ps{\gamma e_i,\alpha_k}}{4(t-\bar x)(x-z_k)}\right)\frac{\ps{\gamma e_i,\gamma e_j}}{2(x-y)}\Psi_{i,j}(x,y)\indeps\right].
		\end{align*}
		Likewise the remaining two-fold integrals containing the $\bar x$ terms are the sum of $(P)$-class terms and
		\begin{align*}
			-&\Is^2\left[\partial_x\left(\left(\frac{\ps{\beta,\gamma e_i}\ps{\beta,\gamma e_j}}{2(t-\bar x)(t-y)}+\sum_{k=1}^{2N+M}\frac{\ps{\beta,\gamma e_i}\ps{\gamma e_j,\alpha_k}}{4(t-\bar x)(y-z_k)}+\frac{\ps{\beta,\gamma e_j}\ps{\gamma e_i,\alpha_k}}{4(t-y)(\bar x-z_k)}\right)\Psi_{i,j}(x,y)\right)\indeps\right]\\
			-&\Is^2\left[\partial_y\left(\left(\frac{\ps{\beta,\gamma e_i}^2}{2(t-x)(t-\bar x)}+\sum_{k=1}^{2N+M}\frac{\ps{\beta,\gamma e_i}\ps{\gamma e_i,\alpha_k}}{4(t-x)(\bar x-z_k)}+\frac{\ps{\beta,\gamma e_i}\ps{\gamma e_i,\alpha_k}}{4(t-\bar x)(x-z_k)}\right)\Psi_{i,j}(x,y)\right)\indeps\right]\\
			-&\Is\times\Ir\left[\partial_x\left(\frac{\ps{\beta,\gamma e_i}\ps{\gamma e_i,\gamma e_j}}{4(t-x)(\bar x-y)}\Psi_{i,j}(x,y)\right)\indeps\right]\\
			-&\Is\times\Ir\left[\partial_x\left(\left(\frac{\ps{\beta,\gamma e_i}\ps{3\beta+\gamma e_i,\gamma e_j}}{4(t-\bar x)(t-y)}+\frac{\ps{\beta,\gamma e_i}\ps{\gamma e_i,\gamma e_j}}{4(t-\bar x)(x-y)}\right)\Psi_{i,j}(x,y)\right)\indeps\right]
		\end{align*}
		and the extra three-fold integrals given by
		\begin{align*}
			&\Is^2\times\It\left[\left(\frac{\ps{\beta,\gamma e_i}\ps{\beta,\gamma e_j}}{2(t-\bar x)(t-y)}+\sum_{k=1}^{2N+M}\frac{\ps{\beta,\gamma e_i}\ps{\gamma e_j,\alpha_k}}{4(t-\bar x)(y-z_k)}+\frac{\ps{\beta,\gamma e_j}\ps{\gamma e_i,\alpha_k}}{4(t-y)(\bar x-z_k)}\right)\frac{\ps{\gamma e_i,\gamma e_f}}{2(x-z)}\Psi_{i,j,f}(x,y,z)\indeps\right]\\
			&\Is^2\times\It\left[\left(\frac{\ps{\beta,\gamma e_i}^2}{2(t-x)(t-\bar x)}+\sum_{k=1}^{2N+M}\frac{\ps{\beta,\gamma e_i}\ps{\gamma e_i,\alpha_k}}{4(t-x)(\bar x-z_k)}+\frac{\ps{\beta,\gamma e_i}\ps{\gamma e_i,\alpha_k}}{4(t-\bar x)(x-z_k)}\right)\frac{\ps{\gamma e_j,\gamma e_f}}{2(y-z)}\Psi_{i,j,f}(x,y,z)\indeps\right]\\
			&\Is\times\Ir\times\It\left[\frac{\ps{\beta,\gamma e_i}\ps{\gamma e_i,\gamma e_j}}{4(t-x)(\bar x-y)}\frac{\ps{\gamma e_i,\gamma e_f}}{2(x-z)}\Psi_{i,j,f}(x,y,z)\indeps\right]\\
			&\Is\times\Ir\times\It\left[\left(\frac{\ps{\beta,\gamma e_i}\ps{3\beta+\gamma e_i,\gamma e_j}}{4(t-\bar x)(t-y)}+\frac{\ps{\beta,\gamma e_i}\ps{\gamma e_i,\gamma e_j}}{4(t-\bar x)(x-y)}\right)\frac{\ps{\gamma e_i,\gamma e_f}}{2(x-z)}\Psi_{i,j,f}(x,y,z))\indeps\right].
		\end{align*}
		By combining these three-fold integrals with the ones obtained before we get that the three-fold integrals can be rewritten as $(P)$-class terms plus
		\begin{align*}
			&\Is^2\times\Ir\left[\partial_x\left(\left(\frac{\ps{\beta,\gamma e_i}\ps{\gamma e_j,\gamma e_f}}{4(t-\bar x)(y-z)}+\frac{\ps{\beta,\gamma e_j}\ps{\gamma e_i,\gamma e_f}}{4(t-y)(\bar x-w)}\right)\Psi_{i,j,f}(x,y,z)\right)\indeps\right]\\
			+&\Is^2\times\Ir\left[\partial_y\left(\frac{\ps{\beta,\gamma e_i}\ps{\gamma e_i,\gamma e_f}}{4(t-x)(\bar x-z)}\Psi_{i,j,f}(x,y,z)\right)\indeps\right]\\
			-&\Is^2\times\Ir^2\left[\partial_y\left(\frac{\ps{\gamma e_i,\gamma e_f}\ps{\gamma e_i,\gamma e_m}}{4(x-z)(\bar x-w)}\Psi_{i,j,f,m}(x,y,z,w)\right)\indeps\right].
		\end{align*}
		where we have used symmetry in $x,\bar x$ to remove singular terms. Recollecting terms and discarding $(P)$-class terms we get 
		\begin{eqs}
			&\mathfrak L_{-(1,1,1);\delta,\eps,\rho}(\bm\alpha)=\mathfrak L^{\mu_B=0}_{-(1,1,1);\delta,\eps,\rho}(\bm\alpha)+\Is\left[\partial_{x}\left(\frac{\ps{\beta,\gamma e_i}(1+\frac{\ps{\beta,\gamma e_i}}2)}{2(t-\bar x)^2}\Psi_{i}(x)\right)\indeps\right]\\
			&+\Is\left[\partial_{x}\left(\sum_{k=1}^{2N+M}\left(\frac{\ps{\beta,\gamma e_i}\ps{\gamma e_i,\alpha_k}}{4(t-x)(\bar x-z_k)}+\frac{\ps{\beta,\gamma e_i}}{2(t-\bar x)}\Theta_i(x)\right)\Psi_{i}(x)\right)\indeps\right]\\
			-&\Is^2\left[\partial_x\left(\left(\frac{\ps{\beta,\gamma e_i}\ps{\beta,\gamma e_j}}{2(t-\bar x)(t-y)}+\sum_{k=1}^{2N+M}\frac{\ps{\beta,\gamma e_j}\ps{\gamma e_i,\alpha_k}}{4(t-y)(\bar x-z_k)}\right)\Psi_{i,j}(x,y)\right)\indeps\right]\\
			-&\Is^2\left[\partial_x\left(\left(\frac{\ps{\beta,\gamma e_j}^2}{2(t-y)(t-\bar y)}+\sum_{k=1}^{2N+M}\frac{\ps{\beta,\gamma e_j}\ps{\gamma e_j,\alpha_k}}{4(t-y)(\bar y-z_k)}\right)\Psi_{i,j}(x,y)\right)\mathds 1_{y\in\Heps}\right]\\
			-&\Is\times\Ir\left[\partial_x\left(\frac{\ps{\beta,\gamma e_i}\ps{\gamma e_i,\gamma e_j}}{4(t-x)(\bar x-y)}\Psi_{i,j}(x,y)\right)\indeps\right]\\
			+&\Is^2\times\Ir\left[\partial_x\left(\frac{\ps{\beta,\gamma e_j}\ps{\gamma e_i,\gamma e_f}}{4(t-y)(\bar x-w)}\Psi_{i,j,f}(x,y,z)\right)\indeps\right]\\
			+&\Is^2\times\Ir\left[\partial_x\left(\frac{\ps{\beta,\gamma e_j}\ps{\gamma e_j,\gamma e_f}}{4(t-y)(\bar y-z)}\Psi_{i,j,f}(x,y,z)\right)\mathds 1_{y\in\Heps}\right]\\
			-&\Is^2\times\Ir^2\left[\partial_x\left(\frac{\ps{\gamma e_j,\gamma e_f}\ps{\gamma e_j,\gamma e_m}}{4(y-z)(\bar y-w)}\Psi_{i,j,f,m}(x,y,z,w)\right)\mathds 1_{y\in\Heps}\right].
		\end{eqs}

		Finally, the last part of the claim (expressing the $\L_{-1,1,1}$ descendant in terms of derivatives) follows from the very same arguments as before.
	\end{proof}

	\subsection{Singular vector at the level three}
	We have seen in the previous section that fully-degenerate fields, corresponding to Vertex Operators $V_\beta$ with $\beta=-\chi\omega_1$ for $\chi\in\{\gamma,\frac2\gamma\}$, give rise to singular vectors at the second level $\Psi_{-2,\beta}$. Actually from such fields we can further define singular vectors at the level three, leading to higher equations of motion.

    \subsubsection{Higher equations of motion at the level three}
	To do so we define the singular vector $\Psi_{-3,\beta}$ to be given by
	\begin{equation}
		\psi_{-3,\chi}\coloneqq\left(\Wb_{-3}+\left(\frac\chi3+\frac2\chi\right)\L_{-3}-\frac4\chi\L_{-1,2}-\frac8{\chi^3}\L_{-1,1,1}\right)V_{\beta}
	\end{equation}
	where $\beta=-\chi\omega_1$ with $\chi\in\{\gamma,\frac2\gamma\}$. More precisely it is defined as follows: 
	\begin{defi}
		Assume that $\beta=-\chi\omega_1$ with $\chi\in\{\gamma,\frac2\gamma\}$ and take $t\in\R$ as well as $(\beta,\bm\alpha)\in\mc A_{N,M+1}$. The singular vector $\psi_{-3,\chi}$ is defined within half-plane correlation functions via
		\begin{equation}
			\begin{split}
				&\ps{\psi_{-3,\chi}(t)\V}\coloneqq\\
				&\ps{\Wb_{-3}V_\beta(t)\V}+\left(\frac\chi3+\frac2\chi\right)\ps{\L_{-3}V_\beta(t)\V}-\frac4\chi\ps{\L_{-1,2}V_\beta(t)\V}-\frac8{\chi^3}\ps{\L_{-1,1,1}V_\beta(t)\V}.
			\end{split}
		\end{equation}
	\end{defi}
	
	Like before, the singular vector $\Psi_{-3,\chi}$ is actually not null in general but is seen to be equal to a sum of descendants of other primary fields. However here we will make the simplifying assumption that this singular vector gives rise to a null vector, which translates on an assumption on the cosmological constants:
	\begin{theorem}\label{thm:deg3}
		Assume that $\beta=-\chi\omega_1$ with $\chi\in\{\gamma,\frac2\gamma\}$, and assume that $\mu_{L,2}-\mu_{R,2}=0$. Then the singular vector at the third level $\psi_{-3,\chi}$ is equal to
		\begin{equation}
			\psi_{-3,\chi}=-\frac2{\chi^2}\Db_{-1,\chi}\psi_{-2,\chi}
		\end{equation}
		where $\Db_{-1,\chi}$ is of the form $\Db_{-1,\chi}\coloneqq a(\chi)\L_{-1}+b(\chi)\Wb_{-1}$ with $a,b$ such that
		\begin{equation}
			a\left(\chi\right)\L_{-1}^{-\chi\omega_1+\gamma e_1}+b\left(\chi\right)\Wb_{-1}^{-\chi\omega_1+\gamma e_1}=\left\{\begin{matrix}&\gamma e_1-\frac2\gamma\rho\qt{for}\chi=\frac2\gamma\\&\gamma e_1-\gamma e_2\qt{for}\chi=\gamma.\end{matrix}\right.
		\end{equation}
		Like before these equalities are to be understood in the sense of Theorems~\ref{thm:deg1} and~\ref{thm:deg2}.
	\end{theorem}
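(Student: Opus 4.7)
The plan is to follow the same strategy as in the proof of Theorem~\ref{thm:deg2}. The starting point is the representation-theoretic identity at the level of multilinear forms on $\C^2$: for $\beta=-\chi\omega_1$ with $\chi\in\{\gamma,\tfrac2\gamma\}$, one has
\[
\Wb_{-3}^\beta+\left(\frac\chi3+\frac2\chi\right)\L_{-3}^\beta-\frac4\chi\L_{-1,2}^\beta-\frac8{\chi^3}\L_{-1,1,1}^\beta=0,
\]
which is a consequence of the representation theory of the $W_3$ algebra for fully-degenerate fields (compare with~\cite[Subsection 5.2]{Toda_OPEWV}). This identity implies that at the regularized level $\ps{\psi_{-3,\chi}(t)\V}_{\delta,\eps,\rho}=0$ for all positive $\delta,\eps,\rho$, so that by the definition of the four descendants involved,
\[
\ps{\psi_{-3,\chi}(t)\V}=-\lim\limits_{\delta,\eps,\rho\to0}\Bigl[\tilde{\mathfrak W}_{-3}+\bigl(\tfrac\chi3+\tfrac2\chi\bigr)\tilde{\mathfrak L}_{-3}-\tfrac4\chi\tilde{\mathfrak L}_{-(1,2)}-\tfrac8{\chi^3}\tilde{\mathfrak L}_{-(1,1,1)}\Bigr](\delta,\eps,\rho;\bm\alpha),
\]
so the task reduces to identifying this limit of remainder terms.

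The key simplifications come from two features. First, since $\beta=-\chi\omega_1$ satisfies $\ps{\beta,\gamma e_2}=0$, all the integrals involving a factor of $\ps{\beta,\gamma e_2}$ in the remainder terms of Lemmas~\ref{lemma:def_L12} and~\ref{lemma:def_L111} vanish identically, as do the corresponding contributions from $\tilde{\mathfrak W}_{-3}^{i=2}$ and $\tilde{\mathfrak L}_{-3}^{i=2}$ that are not already proportional to $\mu_{L,2}-\mu_{R,2}$. Second, the assumption $\mu_{L,2}=\mu_{R,2}$ kills the remaining $i=2$ boundary jumps, since after performing the fusion computations of Lemma~\ref{lemma:limtx} these always appear as $(\mu_{L,2}-\mu_{R,2})\times\cdots$. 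What survives therefore lies entirely in the $i=1$ sector, whose fusion with $V_\beta$ produces either $V_{\beta+\gamma e_1}$ or, via a double boundary fusion treated by Lemma~\ref{lemma:limty}, the primary $V_{\beta+2\gamma e_1}$ that already appeared in the level-two analysis.

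Next, following the same reorganization as in the proof of Theorem~\ref{thm:deg2}, we expect to rewrite the surviving pieces as a linear combination of $\L_{-1}$ and $\Wb_{-1}$ descendants of the appropriate shifted primary. More precisely, the structure should match $-\tfrac{2}{\chi^2}\bigl(a(\chi)\L_{-1}+b(\chi)\Wb_{-1}\bigr)$ applied to the right-hand side of Theorem~\ref{thm:deg2} (which under the current assumption reduces to $2(\tfrac2\gamma-\gamma)(\mu_{L,1}+\mu_{R,1})V_{\beta+\gamma e_1}$ for $\chi=\tfrac2\gamma$, respectively $2\gamma c_\gamma(\bm\mu)V_{\beta+2\gamma e_1}$ for $\chi=\gamma$). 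The coefficients $a(\chi),b(\chi)$ are then fixed by the linear constraint $a\L_{-1}^{\beta+\gamma e_1}+b\Wb_{-1}^{\beta+\gamma e_1}=\gamma e_1-\tfrac2\gamma\weyl$ (resp.\ $\gamma e_1-\gamma e_2$), which is a $2\times 2$ invertible system on $\R^2$ and provides the explicit descendant $\Db_{-1,\chi}$.

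The principal obstacle will be the sheer bookkeeping entailed by the expression of $\tilde{\mathfrak L}_{-(1,1,1)}$ derived in Lemma~\ref{lemma:def_L111} and further collected in Equation~\eqref{eq:L111}, combined with those of $\tilde{\mathfrak L}_{-(1,2)}$, $\tilde{\mathfrak L}_{-3}$ and $\tilde{\mathfrak W}_{-3}$: one must verify that the numerous $(P)$-class residues and multi-fold boundary integrals conspire to produce exactly one descendant at level one of the level-two output. The practical way to make this tractable, in the spirit of Theorem~\ref{thm:deg2}, is to group terms by the type of screening insertion involved ($\Is$, $\Is^2$, $\Is\times\Ir$, $\Is\cap\Heps$, and so on), to discard by symmetry all integrals whose $i=2$ component does not come with an overall factor $\mu_{L,2}-\mu_{R,2}$, to apply Stokes' formula and Lemmas~\ref{lemma:limtx},~\ref{lemma:limty} to extract the surviving contact terms, and finally to match coefficients with $-\tfrac{2}{\chi^2}\Db_{-1,\chi}\psi_{-2,\chi}$ by expanding the latter via the definition of the descendants at the level one. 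The matching at the end is natural in the representation-theoretic picture, according to which $\psi_{-3,\chi}$ lies in the submodule generated by $\psi_{-2,\chi}$ under the action of $\L_{-1}$ and $\Wb_{-1}$; we expect the bookkeeping to be most conveniently carried out using a computer algebra system, similarly to the derivation of the hypergeometric differential equations discussed in the introduction.
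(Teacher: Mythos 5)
Your strategy coincides exactly with the paper's: start from the vanishing of the multilinear form $\Wb_{-3}^\beta+(\frac\chi3+\frac2\chi)\L_{-3}^\beta-\frac4\chi\L_{-1,2}^\beta-\frac8{\chi^3}\L_{-1,1,1}^\beta$, reduce the claim to computing the limit of the four remainder terms, use $\ps{\beta,\gamma e_2}=0$ and $\mu_{L,2}=\mu_{R,2}$ to kill the $e_2$-contributions, extract the surviving contact terms via the fusion lemmas, and recognize the result as $-\frac2{\chi^2}\Db_{-1,\chi}$ applied to the level-two output of Theorem~\ref{thm:deg2}. Every structural ingredient of the paper's argument is present in your outline.

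However, what you have written is a road map, not a proof: the entire substantive content of the paper's argument lies in the bookkeeping you defer. Concretely, three things are asserted rather than established. First, the vanishing of the $i=2$ (and, for the $\frac1{(t-x)^2}$ singularity, also the $i=1$) coefficients is not an immediate consequence of $\ps{\beta,\gamma e_2}=0$; for instance the coefficient $h_2(e_i)(q+2\omega_{\hat i}(\beta))+(\frac\chi3+\frac2\chi)-\frac4\chi(1+\frac{\ps{\beta,\gamma e_i}}2)-\frac4{\chi^3}\ps{\beta,\gamma e_i}(1+\frac{\ps{\beta,\gamma e_i}}2)$ must be computed explicitly and shown to equal $\delta_{i,1}(2\gamma-\frac4\chi)(1-\frac\gamma\chi)$, which vanishes only for the specific values $\chi\in\{\gamma,\frac2\gamma\}$. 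Second, the most singular two- and three-fold integrals (the $\Is^2$ and $\Is^3$ terms with integrands $\frac1{(t-u)(t-v)}$) are individually divergent; the paper needs a dedicated cancellation statement (Lemma~\ref{lemma:sing3}, which you do not invoke) resting on a symmetrization identity $0=\Is^2[\partial_x\partial_y(\cdots)]-\Is^2[\partial_x\partial_y(\cdots)]$ before Lemmas~\ref{lemma:limtx} and~\ref{lemma:limty} can be applied. Third, the paper splits the argument into the case $\mu_{B,1}=\mu_{B,2}=0$ and the general case, where the bulk terms contribute the $c_\gamma^1(\bm\mu)$ piece of $c_\gamma(\bm\mu)$ for $\chi=\gamma$; your outline does not address how the bulk integrals recombine with the boundary ones to reconstitute exactly $c_\gamma(\bm\mu)$. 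Without carrying out these steps one cannot rule out that the surviving terms assemble into a descendant plus an extra primary rather than exactly $-\frac2{\chi^2}\Db_{-1,\chi}\psi_{-2,\chi}$; note also that the paper performs this verification by hand (the computer-algebra remark in the introduction concerns the hypergeometric equations, not this theorem).
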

	\begin{remark}
		Though we have not conducted the computations when we no longer assume that $\mu_{L,2}-\mu_{R,2}=0$, the arguments that we have developed along the proof of Theorem~\ref{thm:deg3} suggest that in general we can write the singular vector at level three as a sum
		\[
		\psi_{-3,\chi}=\left(\mu_{L,2}-\mu_{R,2}\right)\Db_{-2}V_{\beta+\gamma e_2}+\left\{\begin{matrix}&\gamma^2\left(\gamma-\frac2\gamma\right)\left(\mu_{L,1}+\mu_{R,1}\right)\Db_{-1}V_{\beta+\gamma e_1}\qt{if}\chi=\frac2\gamma\\
			&-\frac4\gamma c_\gamma(\bm\mu)\Db_{-1}V_{\beta+2\gamma e_1}\qt{if}\chi=\gamma\end{matrix}\right.
		\]
		where $\Db_{-2}V_{\beta+\gamma e_2}$ is a descendant at the second order of $V_{\beta+\gamma e_2}$ for which
        \begin{align*}
            \Db_{-2}^{\beta+\gamma e_2}(u)&=\frac{8}{\chi^3}\ps{3\beta+\gamma e_2,u}+\frac4\chi\ps{Q+\beta+\gamma e_2,u}\\
            \Db_{-2}^{\beta+\gamma e_2}(u,v)&=\frac{8}{\chi^3}\left(3\ps{\beta,u}\ps{\beta,v}+\frac32(\ps{\beta,u}\ps{\gamma e_2,v}+\ps{\beta,v}\ps{\gamma e_2,u})+\ps{\gamma e_2,u}\ps{\gamma e_2,v}\right)-\frac4\chi\ps{u,v}.
        \end{align*}
        This can be rewritten as
		$\psi_{-3,\chi}=\tilde{\Db}_{-2,\chi}\psi_{-1,\chi}-\frac2{\chi^2}\Db_{-1,\chi}\psi_{-2,\chi}$
		which emphasizes that the singular vector at the level three does not give rise to a new primary field but is rather a linear combination of descendants of the singular vectors at level one and two.
	\end{remark}
	\begin{proof}
		Since at the regularized level we have the equality $$\ps{\left(\Wb_{-3}+\left(\frac\chi3+\frac2\chi\right)\L_{-3}-\frac4\chi\L_{-1,2}-\frac8{\chi^3}\L_{-1,1,1}\right)V_{\beta}(t)\V}_{\delta,\eps,\rho}=0$$ we only need to focus on the remainder terms that occur in the limit. And more specifically we need to show that if $\mu_{L,2}-\mu_{R,2}=0$ then 
		\begin{equation}\label{eq:to_prove_sing3}
			\begin{split}
				\lim\limits_{\delta,\eps,\rho\to0}\tilde{\mathfrak \Wb}_{-3;\delta,\eps,\rho}(\bm\alpha)+&\left(\frac\chi3+\frac2\chi\right)\tilde{\mathfrak L}_{-3;\delta,\eps,\rho}(\bm\alpha)\\
				-\frac4\chi&\tilde{\mathfrak L}_{-(1,2);\delta,\eps,\rho}(\bm\alpha) -\frac8{\chi^3}\tilde{\mathfrak L}_{-(1,1,1);\delta,\eps,\rho}(\bm\alpha)=-\ps{\psi_{-3,\chi}\V}.
			\end{split}
		\end{equation}
		For this purpose we will need the explicit expressions of the remainder terms from Equations~\eqref{eq:rem_Ln} and~\eqref{eq:rem_Wn}, as well as from Lemmas~\ref{lemma:def_L12} and~\ref{lemma:def_L111}. Based on these expressions we can write the left-hand side in Equation~\eqref{eq:to_prove_sing3} as the sum $I_1+I_2+I_3+I_4$,
		where in $I_k$ we have gathered all the $k$-fold integrals that appear in these statements. 
		
		Before entering into the computations, let us stress that since we have made the assumption that $\mu_{L,2}-\mu_{R,2}=0$ and because $\ps{\beta,\gamma e_i}<0$ for $i=1$ remainder terms of the form 
		\[
		      \Is^a\times\Ir^b\left[\partial_{x_1} \left(F_{i_1,\cdots,i_{a+b}}(x_1,\cdots,x_{a+b})\Psi_{\bm i}(x_1,\cdots,x_{a+b})\right)\right]
		\]
		where $F_{i_1,\cdots,i_{a+b}}$ is regular at $x_i=t$ and $x_i=x_j$ for all $i,j\leq a$ vanish in the limit along the same lines as in the proofs of Theorems~\ref{thm:deg1} and~\ref{thm:deg2}. As a consequence we will not take such terms into account later on. The proof will be divided in two parts: first of all we will assume that $\mu_{B,1}=\mu_{B,2}=0$ and then proceed to the general case. To compute the limit of these remainder terms we will rely on Lemmas~\ref{lemma:limtx} and~\ref{lemma:limty} below,      where for the sake of simplicity we will make use of the notations
		\begin{eqs}
			&c_\gamma^1(\bm\mu)\coloneqq -\mu_{B,1}\sin\left(\pi\frac{\gamma^2}{2}\right)\frac{\Gamma\left(\frac{\gamma^2}{2}\right)\Gamma\left(1-\gamma^2\right)}{\Gamma\left(1-\frac{\gamma^2}{2}\right)}\mathds1_{\gamma<1}\qt{and}\\
			&c_\gamma^2(\bm\mu)\coloneqq \left(\mu_{L,1}^2-2\mu_{L,1}\mu_{R,1}\cos\left(\pi\frac{\gamma^2}{2}\right)+\mu_{R,1}^2\right)\frac{\Gamma\left(\frac{\gamma^2}{2}\right)\Gamma\left(1-\gamma^2\right)}{\Gamma\left(1-\frac{\gamma^2}{2}\right)}\mathds1_{\gamma<1}
		\end{eqs}
		so that $c_\gamma(\bm\mu)\mathds1_{\gamma<1}=c_\gamma^1(\bm\mu)+c_\gamma^2(\bm\mu)$.
		
		\subsubsection*{The case $\mu_{B,1}=\mu_{B,2}=0$}
		Thus to start with let us assume that $\mu_{B,1}=\mu_{B,2}=0$, so that we can discard the integrals containing $\indeps$ terms. Then we have, up to $(P)$-class terms,
		\begin{align*}
			I_1=&\Is\left[\partial_x\left(\frac{h_2(e_i)(q+2\omega_{\hat{i}}(\beta))+\left(\frac\chi3+\frac2\chi\right)-\frac4\chi(1+\frac{\ps{\beta,\gamma e_i}}{2})-\frac{4}{\chi^3}\ps{\beta,\gamma e_i}(1+\frac{\ps{\beta,\gamma e_i}}{2})}{(x-t)^{2}}\Psi_i(x)\right)\right]\\
			+&\Is\left[\partial_x\left(\left(\sum_{k=1}^{2N+M}\frac{-2h_2(e_i)\omega_{\hat i}(\alpha_k)-\frac2\chi\ps{\beta,\alpha_k}-\frac2{\chi^3}\ps{\beta,\gamma e_i}\ps{3\beta+\gamma e_i,\alpha_k}}{(t-x)(t-z_k)}\right)\Psi_i(x)\right)\right].
		\end{align*}
		Likewise we can write that $I_2=A+B$ with
		\begin{eqs}\label{eq:defA}
			&A\coloneqq\Is^2\Big[\partial_x\Big(\Big(\frac{2\gamma h_2(e_i)\delta_{i\neq j}}{(t-x)(x-y)}+\frac{\frac2\chi\ps{\beta,\gamma e_j}+\frac4{\chi^3}\ps{\beta,\gamma e_i}\ps{\beta,\gamma e_j}}{(t-x)(t-y)}+\frac4{\chi^3}\frac{\ps{\beta,\gamma e_j}(1+\frac{\ps{\beta,\gamma e_j}}{2})}{(t-y)^2}\Big)\Psi_{i,j}(x,y)\Big)\Big]\\
		\end{eqs}
		\begin{eqs}\label{eq:defB}
			&B\coloneqq \frac8{\chi^3}\Is^2\left[\partial_x\left(\left(\sum_{k=1}^{2N+M}\frac{\ps{\beta,\gamma e_i}\ps{{\gamma e_j,\alpha_k}}}{4(t-x)(y-z_k)}\right)\Psi_{i,j}(x,y)\right)\right]\\
			&+\frac8{\chi^3}\Is^2\left[\partial_x\left(\left(\sum_{k=1}^{2N+M}\frac{\ps{\beta,\gamma e_j}\ps{3\beta+\gamma e_j,\alpha_k}}{4(t-y)(t-z_k)}+\frac{\ps{\beta,\gamma e_j}\ps{\gamma e_i,\alpha_k}}{4(t-y)(x-z_k)}\right)\Psi_{i,j}(x,y)\right)\right]\\
			&+\Is\times\Ir\left[\partial_x\left(\left(\frac{2\gamma h_2(e_i)\delta_{i\neq j}}{(t-x)(x-y)}+\frac2\chi\frac{\ps{\beta,\gamma e_j}}{(t-x)(t-y)}+\frac{8}{\chi^3}\frac{\ps{\beta,\gamma e_i}\ps{3\beta+\gamma e_i,\gamma e_j}}{4(t-x)(t-y)}\right)\Psi_{i,j}(x,y)\right)\right].
		\end{eqs}
		Finally the three- and four-fold integrals are given by
		\begin{eqs}\label{eq:defI3}
			&I_3=-\frac8{\chi^3}\Is^3\left[\partial_{x}\left(\left(\frac{\ps{\beta,\gamma e_j}\ps{\beta,\gamma e_f}}{4(t-y)(t-z)}+\sum_{k=1}^{2N+M}\frac{\ps{\beta,\gamma e_j}\ps{\gamma e_f,\alpha_k}}{4(t-y)(z-z_k)}\right)\Psi_{i,j,f}(x,y,z)\right)\right]\\
			&-\frac8{\chi^3}\Is^2\times\Ir\left[\partial_{x}\left(\left(\frac{\ps{\beta,\gamma e_j}\ps{3\beta+\gamma e_j,\gamma e_f}}{4(t-y)(t-z)}+\frac{\ps{\beta,\gamma e_j}\ps{\gamma e_i,\gamma e_f}}{4(t-y)(x-z)}\right)\Psi_{i,j,f}(x,y,z)\right)\right]\\
			&-\frac8{\chi^3}\Is^2\times\Ir\left[\partial_{x}\left(\frac{\ps{\beta,\gamma e_i}\ps{\gamma e_j,\gamma e_f}}{4(t-x)(y-z)}\Psi_{i,j,f}(x,y,z)\right)\right]\qt{and}
		\end{eqs}
		\begin{equation}\label{eq:defI4}
			I_4=\frac8{\chi^3}\Is^3\times\Ir\left[\partial_{x}\left(\frac{\ps{\beta,\gamma e_j}\ps{\gamma e_f,\gamma e_m}}{4(t-y)(z-w)}\Psi_{i,j,f,m}(x,y,z,w)\right)\right].
		\end{equation}
		We will treat successively these four types of remainder terms to see that they give rise to the right-hand side in Equation~\eqref{eq:to_prove_sing3}. 
		
		\subsubsection*{One-fold integrals}
		For this let us start by looking at the one-fold integrals, and first of all to the ones do not contain any $\bar x$ term. To treat such integrals we begin with terms that contain a singularity $\frac{1}{(t-x)^2}$. The corresponding quantity is then given by
		\begin{align*}
			\Is\left[\partial_x\left(\left(\frac{h_2(e_i)(q+2\omega_{\hat{i}}(\beta))+\left(\frac\chi3+\frac2\chi\right)-\frac4\chi(1+\frac{\ps{\beta,\gamma e_i}}{2})-\frac{4}{\chi^3}\ps{\beta,\gamma e_i}(1+\frac{\ps{\beta,\gamma e_i}}{2})}{(x-t)^{2}}\right)\Psi_i(x)\right)\right].
		\end{align*}
		Using the value of $\beta=-\chi\omega_1$, the numerator is found, somewhat amazingly, to be equal to $\delta_{i,1}(2\gamma-\frac4\chi)(1-\frac{\gamma}{\chi})$ which vanishes for $\chi\in\{\gamma,\frac2\gamma\}$. The subsequent term in one-fold integrals, corresponding to singularities of the form $\frac{1}{t-x}$, is given (up to negligible terms) by
		\begin{align*}
			\Is\left[\partial_x\left(\left(\sum_{k=1}^{2N+M}\frac{-2h_2(e_i)\omega_{\hat i}(\alpha_k)-\frac2\chi\ps{\beta,\alpha_k}-\frac2{\chi^3}\ps{\beta,\gamma e_i}\ps{3\beta+\gamma e_i,\alpha_k}}{(t-x)(t-z_k)}\right)\Psi_i(x)\right)\right].
		\end{align*}
		To start with for $i=2$ this coefficient is seen to be zero for both $\chi=\gamma$ and $\chi=\frac2\gamma$. Likewise for $\chi=\gamma$ and $i=1$ miraculous simplifications occur and this coefficient is actually seen to vanish.  If however we assume that $\chi=\frac2\gamma$ and $i=1$ then the coefficient is equal to $$2\omega_2(\alpha_k)+2\omega_1(\alpha_k)+\frac{\gamma^3}2\ps{-\frac6\gamma\omega_1+\gamma e_1,\alpha_k}=\frac{\gamma^2}2(\gamma-\frac2\gamma)\ps{\gamma e_1-\frac2\gamma\rho,\alpha_k},$$ where we have used that $\rho=3\omega_1-e_1$. As a consequence and in agreement with Lemma~\ref{lemma:limtx} the corresponding remainder term converges towards
		\begin{equation}
			r_1\coloneqq-\gamma^2\left(\gamma-\frac2\gamma\right)(\mu_{L,1}+\mu_{R,1})\sum_{k=1}^{2N+M}\frac{\ps{\gamma e_1-\frac2\gamma\rho,\alpha_k}}{2(z_k-t)} \ps{V_{\gamma e_1-\frac2\gamma\omega_1}(t)\V}.
		\end{equation}
		To summarize we have that $I_1=\text{$(P)$-class terms}+r_1\mathds1_{\chi=\frac2\gamma}$.

		\subsubsection*{Two- and three-fold integrals: the largest singularities}
		Let us now turn to the two- and three-fold integrals, in which case the remainder terms are given by Equations~\eqref{eq:defA},~\eqref{eq:defB} and~\eqref{eq:defI3}. And to start with like before we first consider the largest singularities, that is we gather terms with integrands of the form $\frac{1}{(t-u)(t-v)}$ where both $u$ and $v$ are in $\Is$: this corresponds to $A$ defined in Equation~\eqref{eq:defA} and the very first term in Equation~\eqref{eq:defI3}.
		
		These terms are a priori divergent in the $\delta,\eps,\rho\to0$ limit. However miraculous simplifications allow to show that there are actually asymptotically given by
		\begin{equation}\label{eq:IIIII}
			\begin{split}
				&A-\frac{8}{\chi^3}\Is^3\left[\partial_x\left(\frac{\ps{\beta,\gamma e_j}\ps{\beta,\gamma e_f}}{4(t-y)(t-z)}\Psi_{i,j,f}(x,y,z)\right)\right]=\text{$(P)$-class terms}+r_2\mathds 1_{\chi=\frac2\gamma}\\
				&-\frac4\chi\Is^2\Big[\partial_x\Big(\sum_{k=1}^{2N+M}\Big(\frac{\ps{\gamma e_i,\alpha_k}}{2(t-x)(z_k-y)}\Psi_{1,i}(x,y)-\frac{\ps{\gamma e_1,\alpha_k}}{2(t-y)(z_k-y)}\Psi_{i,1}(x,y)\Big)\Big)\Big]\\
				&+\frac4\chi\Is^2\times\Ir\left[\partial_x\left(\frac{\ps{\gamma e_i,\gamma e_j}}{2(t-x)(z-y)}\Psi_{1,i,j}(x,y,z)-\frac{\ps{\gamma e_1,\gamma e_j}}{2(t-y)(z-y)}\Psi_{i,1,j}(x,y,z)\right)\right]
			\end{split}
		\end{equation}
		with $r_2$ described in Lemma~\ref{lemma:sing3} below, and where this limit is justified. 
		
		\subsubsection*{The remaining two-fold integrals}
		As a consequence we are only left with integrals that have a singularity of order $1$ at $t=x$ or $t=y$. Let us thus consider the remaining two-fold integrals from Equations~\eqref{eq:defB} and~\eqref{eq:IIIII}: 
		\begin{align*}
			&-\frac4\chi\Is^2\Big[\partial_x\Big(\sum_{k=1}^{2N+M}\Big(\frac{\ps{\gamma e_i,\alpha_k}}{2(t-x)(z_k-y)}\Psi_{1,i}(x,y)-\frac{\ps{\gamma e_1,\alpha_k}}{2(t-y)(z_k-y)}\Psi_{i,1}(x,y)\Big)\Big)\Big]\\
			&-\frac8{\chi^3}\Is^2\left[\partial_x\left(\left(\sum_{k=1}^{2N+M}\frac{\ps{\beta,\gamma e_i}\ps{{\gamma e_j,\alpha_k}}}{4(t-x)(z_k-y)}\right)\Psi_{i,j}(x,y)\right)\right]\\
			&-\frac8{\chi^3}\Is^2\left[\partial_x\left(\left(\sum_{k=1}^{2N+M}\frac{\ps{\beta,\gamma e_j}\ps{3\beta+\gamma e_j,\alpha_k}}{4(t-y)(z_k-y)}+\frac{\ps{\beta,\gamma e_j}\ps{{\gamma e_i,\alpha_k}}}{4(t-y)(z_k-x)}\right)\Psi_{i,j}(x,y)\right)\right]\\
			&+\Is\times\Ir\left[\partial_x\left(\left(\frac{2\gamma h_2(e_i)\delta_{i\neq j}}{(t-x)(x-y)}+\frac2\chi\frac{\ps{\beta,\gamma e_j}}{(t-x)(t-y)}+\frac{8}{\chi^3}\frac{\ps{\beta,\gamma e_i}\ps{3\beta+\gamma e_i,\gamma e_j}}{4(t-x)(t-y)}\right)\Psi_{i,j}(x,y)\right)\right].
		\end{align*}
		
		Let us first treat the three integrals ranging over $\Is^2$, and to begin with let us assume that $\chi=\frac2\gamma$. Then in agreement with Lemmas~\ref{lemma:limtx} and~\ref{lemma:limty} the integral over $\Is^2$ will contribute to the limit by
		\begin{eqs}
			r_3\coloneqq-\gamma^2\left(\gamma-\frac2\gamma\right)(\mu_{L,1}+\mu_{R,1})\Is\Big[\sum_{k=1}^{2N+M}\frac{\ps{\gamma e_i,\alpha_k}}{2(y-z_k)}\ps{V_{\gamma e_i}(y)V_{\beta+\gamma e_1}(t)\V}\Big)\Big].
		\end{eqs}
		If we take $\chi=\gamma$ then we see that such integrals simplify to
		\begin{align*}
			&\frac4\gamma\Is^2\Big[\partial_x\Big(\Big(\sum_{k=1}^{2N+M}\frac{\ps{3\beta+2\gamma e_1,\alpha_k}}{2(t-y)(z_k-y)}+\frac{\ps{\gamma e_i,\alpha_k}}{2(t-y)(z_k-x)}\Big)\Psi_{i,1}(x,y)\Big)\Big].
		\end{align*}
		For $i=1$, we can use the equality $3\beta+2\gamma e_1+\gamma e_2=0$ to infer that the corresponding term vanishes in the limit. If we now assume that $i=1$ then thanks to Lemma~\ref{lemma:limty} below we have that the remainder term converges toward (recall that $c_\gamma^2(\bm\mu)=0$ for $\gamma >1$)
		\begin{equation}
			s_1\coloneqq\frac4\gamma c_\gamma^2(\bm\mu)\sum_{k=1}^{2N+M}\frac{\ps{3\beta+3\gamma e_1,\alpha_k}}{2(z_k-t)}\ps{V_{\beta+2\gamma e_1}(t)\V}.
		\end{equation}
		
		For the integrals over $\Is\times\Ir$ like before we rely on the theory of voodoo calculus: in the case where $(i,j)=(2,1)$ simplifications occur and we see that the corresponding $\Is\times\Ir$ integral is given, up to a term that vanishes in the limit, by
		\begin{align*}
			\Is\times\Ir\left[\partial_x\left(\frac{2\gamma}{(t-y)(x-y)}\Psi_{2,1}(x,y)\right)\right]
		\end{align*} 
		which is seen to converge to zero in the limit thanks to the assumption that $\mu_{L,2}-\mu_{R,2}=0$. For $i=j=2$ we see that the integrand is identically zero, so we are left with the cases where $i=1$. If we take $j=2$ then we get
		\begin{align*}
			\Is\times\Ir\left[\partial_x\left(\frac{2\gamma}{(t-y)(y-x)}\Psi_{1,2}(x,y)\right)\right]
		\end{align*}
		which converges to $0$ since $\ps{\beta,\gamma e_1}<0$. When $j=1$ and $\chi=\gamma$ we again rely on our faith in the beauty of CFT, thanks to which the integrand is actually zero.  
		Finally when we take $j=1$ and $\chi=\frac2\gamma$ the corresponding term is given by
		\begin{align*}
			\Is\times\Ir\left[\partial_x\left(\left(\frac{-2\gamma \delta_{j,2}}{(t-x)(x-y)}-\frac{2\gamma\delta_{j,1}}{(t-x)(t-y)}-\frac{\gamma^3 \ps{3\beta+\gamma e_1,\gamma e_j}}{2(t-x)(t-y)}\right)\Psi_{1,j}(x,y)\right)\right]
		\end{align*}
		which in the limit (via Lemma~\ref{lemma:limtx}) will yield a term
		\begin{equation}
			r_4\coloneqq \gamma^2(\frac2\gamma-\gamma) (\mu_{L,1}+\mu_{R,1})\Ir\left[\left(\frac{\ps{\gamma e_1-\frac2\gamma\rho,\gamma e_j}}{2(t-y)}\right)\Psi_{1,j}(t,y)\right]
		\end{equation}
		
		To wrap up for the two-fold integrals, we see that we arrive to the following conclusion:
		\begin{eqs}
			&A+B-\frac{8}{\chi^3}\Is^3\left[\partial_x\left(\frac{\ps{\beta,\gamma e_j}\ps{\beta,\gamma e_f}}{4(t-y)(t-z)}\Psi_{i,j,f}(x,y,z)\right)\right]\\
			&=\text{$(P)$-class terms }+(r_2+r_3+r_4)\mathds1_{\chi=\frac2\gamma}+s_1\mathds1_{\chi=\gamma}.
		\end{eqs}
		
		\subsubsection*{The three- and four-fold integrals}
		Therefore it only remains to treat the remaining three- and four-fold integrals, coming either from Equations~\eqref{eq:defI3} and~\eqref{eq:defI4}, and given by:
		\begin{align*}
			&-\frac8{\chi^3}\Is^3\left[\partial_{x}\left(\left(\sum_{k=1}^{2N+M}\frac{\ps{\beta,\gamma e_j}\ps{\gamma e_f,\alpha_k}}{4(t-y)(z-z_k)}\right)\Psi_{i,j,f}(x,y,z)\right)\right]\\
			&-\frac8{\chi^3}\Is^2\times\Ir\left[\partial_{x}\left(\left(\frac{\ps{\beta,\gamma e_j}\ps{3\beta+\gamma e_j,\gamma e_f}}{4(t-y)(t-z)}+\frac{\ps{\beta,\gamma e_j}\ps{\gamma e_i,\gamma e_f}}{4(t-y)(x-z)}\right)\Psi_{i,j,f}(x,y,z)\right)\right]\\
			&-\frac8{\chi^3}\Is^2\times\Ir\left[\partial_{x}\left(\frac{\ps{\beta,\gamma e_i}\ps{\gamma e_j,\gamma e_f}}{4(t-x)(y-z)}\Psi_{i,j,f}(x,y,z)\right)\right]\\
			&+\frac4\chi\Is^2\times\Ir\left[\partial_x\left(\frac{\ps{\gamma e_i,\gamma e_j}}{2(t-x)(z-y)}\Psi_{1,i,j}(x,y,z)-\frac{\ps{\gamma e_1,\gamma e_j}}{2(t-y)(z-y)}\Psi_{i,1,j}(x,y,z)\right)\right]\\
			&+\frac8{\chi^3}\Is^3\times\Ir\left[\partial_{x}\left(\frac{\ps{\beta,\gamma e_j}\ps{\gamma e_f,\gamma e_m}}{4(t-y)(z-w)}\Psi_{i,j,f,m}(x,y,z,w)\right)\right].
		\end{align*}
		The integral over $\Is^3$ is treated using Lemma~\ref{lemma:limty} below and yields
		\begin{equation}
			r_2\coloneqq\frac4\gamma c_\gamma^2(\bm\mu)\Is\left[\sum_{k=1}^{2N+M}\frac{\ps{\gamma e_f,\alpha_k}}{2(z-z_k)}\ps{V_{\beta+2\gamma e_1}(t)V_{\gamma e_f}(z)\V}\right].
		\end{equation}
		As for the other integrals, by explicit computations they simplify to
		\begin{align*}
			&\frac{2}{\chi^2}(\gamma-\chi)\Is^2\times\Ir\left[\partial_x\left(\frac{\ps{\gamma e_j,\gamma e_f}}{(t-x)(y-z)}\Psi_{1,j,f}(x,y,z)\right)\right]\\
			&+\frac{2\gamma}{\chi^2}\Is^2\times\Ir\left[\partial_x\left(\left(\frac{\ps{3\beta+\gamma e_1,\gamma e_f}}{(t-y)(t-z)}+\frac{\ps{\gamma e_i,\gamma e_f}}{(t-y)(x-z)}+\frac\chi\gamma\frac{\ps{\gamma e_1,\gamma e_f}}{(t-y)(y-z)}\right)\Psi_{i,1,f}(x,y,z)\right)\right]\\
			&-\frac{2\gamma}{\chi^2}\Is^3\times\Ir\left[\partial_x\left(\frac{\ps{\gamma e_f,\gamma e_m}}{(t-y)(z-w)}\Psi_{i,1,f,m}(x,y,z,w)\right)\right].
		\end{align*}
		According to Lemma~\ref{lemma:limtx} the latter will give a remainder term for $\chi=\frac2\gamma$:
		\begin{equation}
			r_5\coloneqq \gamma^2(\gamma-\frac2\gamma)(\mu_{L,1}+\mu_{R,1})\Is\times\Ir\left[\frac{\ps{\gamma e_j,\gamma e_f}}{2(y-z)}\Psi_{1,j,f}(t,y,z)\right],
		\end{equation}
		while for $\chi=\gamma$ the corresponding remainder term is
		\begin{eqs}
			s_3\coloneqq &\frac4\gamma c_\gamma^2(\bm\mu)\Ir\left[\frac{\ps{3\beta+3\gamma e_1,\gamma e_f}}{2(t-z)}\ps{V_{\beta+2\gamma e_1}(t)V_{\gamma e_f}(z)\V}\right]\\
			&-\frac4\gamma c_\gamma^2(\bm\mu)\Is\times\Ir\left[\frac{\ps{\gamma e_f,\gamma e_m}}{2(z-w)}\Psi_{1,1,f,m}(t,t,z,w)\right].
		\end{eqs}
		
		\subsubsection*{Recollecting terms}
		Finally we have treated all the remainder terms that appear in the expression of the singular vectors. After this process the only remaining terms are
		\[
		(r_1+r_2+r_3+r_4+r_5)\mathds{1}_{\chi=\frac2\gamma}+(s_1+s_2+s_3)\mathds{1}_{\substack{\chi=\gamma\\\gamma<1}}.
		\]
		
		To start with, in the case where $\chi=\gamma$ we obtain a remainder term given by $\frac4\gamma c_\gamma^2(\bm\mu)\times$
		\begin{align*}
			&\sum_{k=1}^{2N+M}\frac{\ps{3\beta+3\gamma e_1,\alpha_k}}{2(z_k-t)}\ps{V_{2\gamma e_1-\gamma\omega_1}(t)\V}-\Ir\left[\frac{\ps{3\beta+3\gamma e_1,\gamma e_f}}{2(x-t)}\ps{V_{\gamma e_i}(x)V_{\beta+2\gamma e_1}(t)\V}\right]\\
			&+\Is\left[\sum_{k=1}^{2N+M}\frac{\ps{\alpha_k,\gamma e_i}}{2(x-z_k)}\ps{V_{\gamma e_i}(x)V_{\beta+2\gamma e_1}(t)\V}\right]\\
			&-\Is\times\Ir\left[\frac{\ps{\gamma e_i,\gamma e_j}}{2(x-y)}\ps{V_{\gamma e_i}(x)V_{\gamma e_j}(y)V_{\beta+2\gamma e_1}(t)\V}\right].
		\end{align*}
		The above is nothing but the descendant $(a\L_{-1}+b\Wb_{-1})V_{\beta+2\gamma e_1}$ where $a,b$ satisfy
		\[
		a\L_{-1}^{\beta+2\gamma e_1}+b\Wb_{-1}^{\beta+2\gamma e_1}=3(\gamma e_1-\gamma \omega_1)=-3\gamma h_2.
		\]
		Therefore we infer that when $\mu_{B,1}=\mu_{B,2}=0$:
		\begin{eqs}
			\psi_{-3,\gamma}=-\frac4\gamma c_\gamma^2(\bm\mu)\Db_{-1,\gamma}V_{\beta+2\gamma e_1}.
		\end{eqs}
		
		When $\chi=\frac2\gamma$ the remaining terms coming from the $r_k$ are given by $\gamma^2(\frac2\gamma-\gamma)(\mu_{L,1}+\mu_{R,1})\times$
		\begin{align*}
			& \sum_{k=1}^{2N+M}\frac{\ps{\gamma e_1-\frac2\gamma\rho,\alpha_k}}{2(z_k-t)} \ps{V_{\gamma e_1-\frac2\gamma\omega_1}(t)\V}-\Ir\left[\frac{\ps{\gamma e_1-\frac2\gamma\rho,\gamma e_j}}{2(y-t)}\ps{V_{\gamma e_j}(y)V_{\gamma e_1-\frac2\gamma\omega_1}(t)\V}\right]\\
			&+\Is\Big[\frac{1}{y-t}\Psi_{1,2}(t,y)\Big]+\Is\Big[\sum_{k=1}^{2N+M}\frac{\ps{\gamma e_i,\alpha_k}}{2(y-z_k)}\ps{V_{\gamma e_j}(y)V_{\gamma e_1-\frac2\gamma\omega_1}(t)\V}\Big]\\
			&-\Is\times\Ir\Big[\frac{\ps{\gamma e_i,\gamma e_j}}{2(y-z)}\ps{V_{\gamma e_i}(y)V_{\gamma e_j}(z)V_{\gamma e_1-\frac2\gamma\omega_1}(t)\V}\Big].
		\end{align*}
        Like in the proof of Theorem~\ref{thm:deg2}, since $\ps{\gamma e_1-\frac2\gamma\omega_1,\gamma e_i}<0$ for $i=1,2$ we can rewrite the latter using integration by parts, and get
        \begin{align*}
			& \sum_{k=1}^{2N+M}\frac{\ps{\gamma e_1-\frac2\gamma\rho,\alpha_k}}{2(z_k-t)} \ps{V_{\gamma e_1-\frac2\gamma\omega_1}(t)\V}-\Ir\left[\frac{\ps{\gamma e_1-\frac2\gamma\rho,\gamma e_j}}{2(y-t)}\ps{V_{\gamma e_j}(y)V_{\gamma e_1-\frac2\gamma\omega_1}(t)\V}\right]\\
			&-\Is\Big[\frac{\ps{\gamma e_1-\frac2\gamma\omega_1,\gamma e_i}-2\delta_{j=2}}{2(y-t)}\ps{V_{\gamma e_j}(y)V_{\gamma e_1-\frac2\gamma\omega_1}(t)\V}\Big].
		\end{align*}
        We recognize here the definition of the descendant $(a\L_{-1}+b\Wb_{-1})V_{\gamma e_1-\frac2\gamma\omega_1}$ where $a$ and $b$ are chosen in such a way that
		\[
		a\L_{-1}^{\gamma e_1-\frac2\gamma\omega_1}+b\Wb_{-1}^{\gamma e_1-\frac2\gamma\omega_1}=\gamma e_1-\frac2\gamma\rho.
		\]
		As a consequence we arrive to the equality
		\begin{eqs}
			\psi_{-3,\frac2\gamma}&=\gamma^2\left(\gamma-\frac2\gamma\right)\left(\mu_{L,1}+\mu_{R,1}\right)\Db_{-1,\frac2\gamma}V_{\beta+\gamma e_1}.
		\end{eqs}
		By remembering the expression of the singular at the level two from Theorem~\ref{thm:deg2} we recover the desired result in this case.
		
		\subsubsection*{The general case}
		Let us now no longer make the assumption that $\mu_{B,1}=\mu_{B,2}=0$. Then the remainder terms feature extra terms, given by
		\begin{eqs}
			&\mathfrak I\coloneqq-\frac{8}{\chi^3}\Is\left[\partial_{x}\left(\frac{\ps{\beta,\gamma e_i}(1+\frac{\ps{\beta,\gamma e_i}}2)}{2(t-\bar x)^2}\Psi_{i}(x)\right)\indeps\right]\\
			&-\frac{8}{\chi^3}\Is\left[\partial_{x}\left(\sum_{k=1}^{2N+M}\left(\frac{\ps{\beta,\gamma e_i}\ps{\gamma e_i,\alpha_k}}{4(t-x)(\bar x-z_k)}+\frac{\ps{\beta,\gamma e_i}}{2(t-\bar x)}\Theta_i(x)\right)\Psi_{i}(x)\right)\indeps\right]\\
			&+\frac{8}{\chi^3}\Is^2\left[\partial_x\left(\left(\frac{\ps{\beta,\gamma e_i}\ps{\beta,\gamma e_j}}{2(t-\bar x)(t-y)}+\sum_{k=1}^{2N+M}\frac{\ps{\beta,\gamma e_j}\ps{\gamma e_i,\alpha_k}}{4(t-y)(\bar x-z_k)}\right)\Psi_{i,j}(x,y)\right)\indeps\right]\\
			&+\frac{8}{\chi^3}\Is^2\left[\partial_x\left(\left(\frac{\ps{\beta,\gamma e_j}^2}{2(t-y)(t-\bar y)}+\sum_{k=1}^{2N+M}\frac{\ps{\beta,\gamma e_j}\ps{\gamma e_j,\alpha_k}}{4(t-y)(\bar y-z_k)}\right)\Psi_{i,j}(x,y)\right)\mathds 1_{y\in\Heps}\right]\\
			&+\frac{8}{\chi^3}\Is\times\Ir\left[\partial_x\left(\frac{\ps{\beta,\gamma e_i}\ps{\gamma e_i,\gamma e_j}}{4(t-x)(\bar x-y)}\Psi_{i,j}(x,y)\right)\indeps\right].
		\end{eqs}
		In the above we have removed terms that vanish in the limit. Then we can rewrite the latter via the same reasoning as in Lemma~\ref{lemma:sing3}: up to $(P)$-class terms
		\begin{align*}
			&-\frac{8}{\chi^3}\Is\left[\partial_{x}\left(\frac{\ps{\beta,\gamma e_i}(1+\frac{\ps{\beta,\gamma e_i}}2)}{2(t-\bar x)^2}\Psi_{i}(x)\right)\indeps\right]\\
			&+\frac{8}{\chi^3}\Is^2\left[\partial_x\left(\left(\frac{\ps{\beta,\gamma e_i}\ps{\beta,\gamma e_j}}{2(t-\bar x)(t-y)}\indeps+\frac{\ps{\beta,\gamma e_j}^2}{2(t-y)(t-\bar y)}\mathds 1_{y\in\Heps}\right)\Psi_{i,j}(x,y)\right)\right]\\
			&=\frac4\chi\Is\Big[\partial_x\left(\sum_{k=1}^{2N+M}\left(\frac{\ps{\gamma e_1,\alpha_k}}{2(t-x)(z_k-\bar x)}\Psi_{1}(x)-\frac{\ps{\gamma e_1,\alpha_k}}{2(t-\bar x)(z_k-\bar x)}\Psi_{1}(x)\right)\right)\Big]\\
			&-\frac4\chi\Is\times\Ir\left[\partial_x\left(\frac{\ps{\gamma e_1,\gamma e_i}}{2(t-x)(y-\bar x)}\Psi_{1,i}(x,y)-\frac{\ps{\gamma e_1,\gamma e_i}}{2(t-\bar x)(y- \bar x)}\Psi_{1,i}(x,y)\right)\right].
		\end{align*}
		As a consequence we obtain 
		\begin{align*}
			&\mathfrak I=\frac{8\gamma }{\chi^2}\Is\left[\partial_{x}\left(\frac{1}{2(t-\bar x)}\left(\sum_{k=1}^{2N+M}\frac{\ps{\gamma e_i,\alpha_k}}{4(t-\bar x)(\bar x-z_k)}+\Theta_i(x)\right)\Psi_{i}(x)\right)\indeps\right]\\
			&+\frac{8}{\chi^3}\Is^2\left[\partial_x\left(\left(\sum_{k=1}^{2N+M}\frac{\ps{\beta,\gamma e_j}\ps{\gamma e_i,\alpha_k}}{4(t-y)(\bar x-z_k)}\right)\Psi_{i,j}(x,y)\right)\indeps\right]\\
			&+\frac{8}{\chi^3}\Is^2\left[\partial_x\left(\left(\sum_{k=1}^{2N+M}\frac{\ps{\beta,\gamma e_j}\ps{\gamma e_j,\alpha_k}}{4(t-y)(\bar y-z_k)}\right)\Psi_{i,j}(x,y)\right)\mathds 1_{y\in\Heps}\right]\\
			&-\frac{4}{\chi^2}(\gamma-\chi)\Is\times\Ir\left[\partial_x\left(\frac{\ps{\gamma e_1,\gamma e_i}}{2(t-x)(\bar x-y)}\Psi_{1,i}(x,y)\right)\indeps\right]\\
			&+\frac4\chi\Is\times\Ir\left[\partial_x\left(\frac{\ps{\gamma e_1,\gamma e_i}}{2(t-\bar x)(y- \bar x)}\Psi_{1,i}(x,y)\right)\right].
		\end{align*}
		The limit is computed in Lemma~\ref{lemma:limty}: for $\chi=\frac2\gamma$ it vanishes while for $\chi=\gamma$ we get
		\begin{align*}
			s_4\coloneqq&\frac4\gamma c_\gamma^1(\bm\mu) \left(\sum_{k=1}^{2N+M}\frac{\ps{\gamma e_i,\alpha_k}}{2(z_k-t)}\ps{V_{\beta+2\gamma e_1}(t)\V}-\Theta_i(t)\right)\\
			-&\frac4\gamma c_\gamma^1(\bm\mu)\Ir\left[\frac{\ps{\gamma e_1,\gamma e_i}}{2(y-t)}\ps{V_{\beta+2\gamma e_1}(t)V_{\gamma e_i}(y)\V}\right].
		\end{align*}
		By definition of $\Theta_i$ from Equation~\eqref{eq:Theta} we see that this extra term is given by
		\begin{equation}
			s_4=\frac4\gamma c_\gamma^1(\bm\mu)\ps{\Db_{-1,\gamma}V_{\beta+2\gamma e_1}(t)\V}.
		\end{equation}
		By combining with the expression obtained above for the case with $\mu_{B}=0$ we arrive to the desired conclusion when this assumption is no longer made:
		\begin{eqs}
			\psi_{-3,\gamma}=-\frac4\gamma c_\gamma(\bm\mu)\Db_{-1,\gamma}V_{\beta+2\gamma e_1}.
		\end{eqs}
		
		All in all, this allows to conclude for the proof of Theorem~\ref{thm:deg3}.
	\end{proof}

    \appendix

\section{Technical estimates and auxiliary computations}

We gather here some technical estimates and auxiliary computations that are used throughout the paper. For the sake of completeness we reproduce the results~\cite[Lemmas 2.4 to 2.8]{CH_sym1}, and also prove several convergence results that are used for the computations of the singular vectors at levels two and three.

\subsection{Technical estimates}
The statements appearing in this Subsection can be found in~\cite{CH_sym1} to which we refer for more details.
\begin{lemma}\label{lemma:inf_integrability_toda}
    Assume that $\bm{\alpha}\in\mc A_{N,M}$ and consider for $i=1,2$ additional insertions $\bm{x}_i \coloneqq\left(x^{(1)}_i,\cdots,x^{(N_i)}_i\right)\in\H^{N_i}$ and $\bm{y}_i \coloneqq\left(y^{(1)}_i,\cdots,y^{(M_i)}_i\right)\in\R^{M_i}$. Then for any  $h>0$, if for $i=1,2$ the $\bm{x}_i$, $\bm{y}_i$ and $\bm{z}$ stay in the domain $U_h\coloneqq \left\lbrace{\bm w, h<\min\limits_{n\neq m}\norm{w^{(n)}-w^{(m)}}}\right\rbrace$, then there exists $C=C_h$ such that, uniformly in $\delta,\eps,\rho$,
    \begin{equation*}
        \ps{\prod_{i=1}^2 \prod_{n=1}^{N_i}V_{\gamma e_i}\left(x^{(n)}_i\right) \prod_{m=1}^{M_i}V_{\gamma e_i}\left(y^{(m)}_i\right)\V}_{\delta,\eps,\rho}\leq C_h \prod_{i=1}^2 \prod\limits_{n=1}^{N_i}\left(1+\norm{x^{(n)}_i}\right)^{-4}\prod\limits_{m=1}^{M_i}\left(1+\norm{y^{(m)}_i}\right)^{-2}.
    \end{equation*}
\end{lemma}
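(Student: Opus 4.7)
The plan is to treat the extra vertex insertions $V_{\gamma e_i}(x_i^{(n)})$ and $V_{\gamma e_i}(y_i^{(m)})$ as additional base insertions and apply the defining formula~\eqref{eq:reg correl shifted} to the enlarged data $(\bm z',\boldsymbol\alpha')$. The bound then splits into (i) an asymptotic analysis of the algebraic pre-factor $C(\bm z',\boldsymbol\alpha')$ combined with the zero-mode integral, which supplies the decay in $|x_i^{(n)}|$ and $|y_i^{(m)}|$, and (ii) a uniform estimate on the residual stochastic expectation.

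First I would verify that the enlarged charges still lie in $\mc A_{N',M'}$: the bulk Seiberg bound $\ps{\gamma e_i-Q,e_j}<0$ holds throughout the $L^2$-phase $\gamma<\sqrt 2$, and the shifted zero-mode charge $\bm s'=\bm s+\sum_n\gamma e_{i_n}+\tfrac12\sum_m\gamma e_{i_m}$ satisfies $\ps{\bm s',\omega_j}\geq\ps{\bm s,\omega_j}>0$ since $\ps{\gamma e_i,\omega_j}\geq 0$, so the $\bm c$-integral continues to converge. I would then extract the claimed decay $(1+|x_i^{(n)}|)^{-4}$ and $(1+|y_i^{(m)}|)^{-2}$ from the combination of the algebraic factors in $C(\bm z',\boldsymbol\alpha')$ (whose large-argument asymptotics can be controlled explicitly using $\ps{\gamma e_i,Q}=\gamma^2+2$ together with the neutrality identity $\sum_k\alpha'_k=2(\bm s'+Q)$) and the large-$|x|$ scaling of the modified GMC masses in the zero-mode integral, the resulting exponents matching the expected conformal scaling of $V_{\gamma e_i}$. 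On the domain $U_h$ all remaining cross-distances are bounded below by $h$, so the sub-leading part of the pre-factor is bounded by the $h$-dependent constant $C_h$.

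It then remains to bound the residual stochastic expectation uniformly in $\delta,\eps,\rho$. Using $\exp(-x)\leq C_px^{-p}$ together with Fubini, the $\bm c$-integral reduces to a finite product of negative fractional moments of the modified GMC masses $A_j,A_j^\partial$, whose integrands acquire extra factors of the form $(|x'|_+/|x'-x_i^{(n)}|)^{\ps{\gamma e_j,\gamma e_i}}$ and $(|x'|_+/|x'-y_i^{(m)}|)^{\ps{\gamma e_j,\gamma e_i}}$ from the interaction with the new insertions. These singular factors are compatible with the GMC framework within the $L^2$-phase, and the uniform negative-moment bound follows from Kahane's convexity inequality combined with the fusion asymptotics. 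The main obstacle lies precisely in this uniform moment estimate: the modified integrands develop new singular points at $x_i^{(n)}$ and $y_i^{(m)}$, and one has to verify that the GMC mass does not concentrate at these new singularities uniformly in the regularization parameters as the positions of $x_i^{(n)}$ and $y_i^{(m)}$ range over $U_h$, which requires the fusion-type arguments developed in~\cite{CH_sym1,CH_construction}.
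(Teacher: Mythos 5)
The paper does not actually prove this lemma: it is reproduced verbatim from the companion paper and the appendix simply points to \cite[Lemmas 2.4 to 2.8]{CH_sym1}, so there is no in-text proof to compare yours against. Your outline is the standard route for this kind of extension estimate (absorb the new insertions into the defining formula~\eqref{eq:reg correl shifted}, check the Seiberg bounds for the enlarged weights, integrate out the zero mode to reduce to negative GMC moments, and read off the decay at infinity from the conformal weight $\Delta_{\gamma e_i}=1$, which indeed gives $4\Delta=4$ in the bulk and $2\Delta=2$ on the boundary), and the verification that $\ps{\gamma e_i-Q,e_j}<0$ and $\ps{\bm s',\omega_j}>0$ is correct.

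That said, the step that carries all the content of the lemma is asserted rather than proved. The decay $(1+\norm{x_i^{(n)}})^{-4}$ does not come from the prefactor $C(\bm z',\bm\alpha')$ alone: when some $x_i^{(n)}\to\infty$ the modified density $Z_{j,\rho}$ acquires a factor comparable to $\norm{x_i^{(n)}}^{-\ps{\gamma e_j,\gamma e_i}}$ on any fixed region, so the GMC masses degenerate at a polynomial rate and the negative moments produced by the $\bm c$-integration \emph{grow} polynomially in $\norm{x_i^{(n)}}$; the $-4$ exponent only emerges after this growth is balanced against the decay of $C(\bm z',\bm\alpha')$ and the $\ln\norm{\cdot}_+$ normalisations in $G_{\hat\C}$. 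This power counting is the whole proof and you do not carry it out. Two further imprecisions: fusion asymptotics play no role here, since the hypothesis $U_h$ keeps all insertions at mutual distance $\geq h$ and fusion estimates govern precisely the opposite regime of colliding insertions; and for an upper bound on a negative moment $\E[M^{-p}]$ what must be controlled is a \emph{lower} bound on the mass (non-degeneracy on a region away from the insertions, uniformly in $\delta,\eps,\rho$ and in the positions), not the absence of concentration at the new singularities. With the power counting done explicitly and these two points corrected, the argument would be complete.
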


\begin{lemma}\label{lemma:fusion}
    Assume that $\bm{\alpha}\in\mc A_{N,M}$ and that all pairs of points in $\bm{z}$ are separated by some distance $h>0$ except for one pair $(z_1,z_2)$. Further assume that $\ps{\alpha_1+\alpha_2-Q,e_1}<0$. 
    Then as $z_1\to z_2$, for any positive $\eta$ there exists a positive constant $K$ such that, uniformly on $\delta,\eps,\rho$:
    \begin{enumerate}
        \item if $z_1,z_2\in\H$ then
        \begin{equation}\label{eq:fusion_hh}
            \ps{\V}_{\delta,\eps,\rho}\leq K \norm{z_1-z_2}^{-\ps{\alpha_1,\alpha_2}+\left(\frac{1}{2}\ps{\alpha_1+\alpha_2-Q,e_2}^2-\eta\right)\mathds{1}_{\ps{\alpha_1+\alpha_2-Q,e_2}>0}};
        \end{equation}
        \item if $z_1,z_2\in\R$ then
        \begin{equation}\label{eq:fusion_rr}
            \ps{\V}_{\delta,\eps,\rho}\leq K \norm{z_1-z_2}^{-\frac{\ps{\alpha_1,\alpha_2}}2+\left(\frac{1}{4}\left(\ps{\alpha_1+\alpha_2-Q,e_2}\right)^2-\eta\right)\mathds{1}_{\ps{\alpha_1+\alpha_2-Q,e_2}>0}};
        \end{equation}
        \item if $z_1\in\H$ while $z_2\in\R$ with in addition $\ps{\alpha_1-\frac Q2,e_1}<0$ then
        \begin{equation}\label{eq:fusion_hr}
        \begin{split}
            &\ps{\V}_{\delta,\eps,\rho}\leq K\times \norm{z_1-\bar z_1}^{-\frac{\norm{\alpha_1}^2}2+\left(\left(\ps{\alpha_1-\frac Q2,e_2}\right)^2-\eta\right)\mathds{1}_{\ps{\alpha_1-\frac Q2,e_2}>0}} \\
            & \norm{z_1-z_2}^{-\ps{\alpha_1,\alpha_2}+\left(\left(\ps{\alpha_1+\frac{\alpha_2}2-\frac Q2,e_2}\right)^2-\eta\right)\mathds{1}_{\ps{\alpha_1+\frac{\alpha_2}2-\frac Q2,e_2}>0}}.
        \end{split}
        \end{equation}
    \end{enumerate}
    As a consequence for $\beta$ such that $\ps{\beta,e_i}<0$ we have that $\norm{x-t}^{\frac{\ps{\beta,\gamma e_i}}2}F_i(x)$ is continuous at $x=t$, where $F_i$ is of the form
    $$
        F_i(x) = \Is^p\times\Ir^q \left[r_{i,\bm{i}}(x,\bm{x})\Psi_{i,\bm{i}}(x,\bm{x})\right]
    $$
    with $r_{i,\bm{i}}=r_{i,i_1,...,i_{(p+q)}}$ regular at $x=t$ and such that $r_{i,\bm{i}}(x,\bm{x})\Psi_{i,\bm{i}}(\bm{x})$ is integrable on $\Is^p\times\Ir^q$, while $\Psi_{i,\bm{i}}(x,\bm{x}) = \Psi_{i,i_1,...,i_{p+q}}(x,x_1,...,x_{p+q})$.
\end{lemma}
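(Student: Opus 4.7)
The plan is to adapt the standard Girsanov-plus-scaling strategy used for Liouville fusion estimates (see for instance~\cite{DKRV, CH_construction}) to the $\sl_3$ boundary setting. Since the statement is about the regularized correlation functions, I will work at fixed $\delta,\eps,\rho$ and track constants carefully so that the bound is uniform.

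First, for the bulk-bulk case, I would apply the Cameron-Martin / Girsanov shift associated to the two merging insertions, writing $\X = \tilde\X - \alpha_1 G(\cdot,z_1) - \alpha_2 G(\cdot,z_2)$ under an equivalent measure. This produces the explicit prefactor $\norm{z_1-z_2}^{-\ps{\alpha_1,\alpha_2}}$ from the two-point Green's function, while each GMC potential $M_{\gamma e_i}$ acquires a multiplicative weight of the form $\norm{x-z_1}^{-\ps{\gamma e_i,\alpha_1}}\norm{x-z_2}^{-\ps{\gamma e_i,\alpha_2}}$, and similarly for the boundary measures. After this shift the remaining expectation is an integral of an exponentiated sum of GMC masses weighted against these Koba-Nielsen factors, localized near the fusion point at scale $\lambda\coloneqq\norm{z_1-z_2}$.

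Next, I would split each cosmological integral into a piece inside the ball $B(z_2,r)$ (for $r>0$ fixed but small) and its complement. On the complement the weights are bounded and produce an $O(1)$ contribution. On the inner ball I would rescale $x=z_2+\lambda u$: this turns $M_{\gamma e_i}(dx)$ into $\lambda^{2+\gamma^2} e^{\gamma\ps{e_i,\tilde\X_\lambda(z_2)}}\tilde M_{\gamma e_i}(du)$, where the Seiberg-type exponent counting the net power of $\lambda$ after combining the Girsanov weights, the scaling dimension, and the zero-mode integration is exactly $\frac12\ps{\alpha_1+\alpha_2-Q,e_i}$ (and a factor two for bulk vs.\ boundary). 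When $\ps{\alpha_1+\alpha_2-Q,e_2}<0$ this rescaled integral is uniformly bounded and the first exponent in~\eqref{eq:fusion_hh} suffices. When $\ps{\alpha_1+\alpha_2-Q,e_2}>0$ one is in the supercritical regime: here I would use a stopping-time/Kahane comparison together with the Markov property of the GFF to extract the additional $\lambda^{\frac12\ps{\alpha_1+\alpha_2-Q,e_2}^2-\eta}$ factor, which is the standard tail estimate for GMC moments above the Seiberg bound, with the $\eta$ absorbing a logarithmic correction.

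The boundary-boundary case~\eqref{eq:fusion_rr} is formally identical after replacing the bulk Green's function by its boundary restriction, which explains the factor of $1/2$ in the exponents. The mixed case~\eqref{eq:fusion_hr} is the most delicate: one must separately control the \emph{bulk-boundary} scale $\norm{z_1-\bar z_1}$ (which measures how close $z_1$ is to its own boundary reflection) and the \emph{bulk-boundary} fusion scale $\norm{z_1-z_2}$. The extra assumption $\ps{\alpha_1-Q/2,e_1}<0$ ensures that the bulk insertion alone is Seiberg-admissible as it approaches the boundary, so one can sequentially apply the rescaling argument twice: first shrinking $\norm{z_1-\bar z_1}$, then $\norm{z_1-z_2}$. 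The main obstacle in this step is keeping the scaling exponents consistent, because the two rescalings interact through the boundary GMC measure on the real line. Finally, the continuity consequence for the functionals $F_i$ follows by bounding $\Psi_{i,\bm i}$ via the appropriate case of the estimate just proven, checking that after multiplication by $\norm{x-t}^{\ps{\beta,\gamma e_i}/2}$ the integrand over $\Is^p\times\Ir^q$ is dominated by an integrable envelope uniform in $x$ in a neighborhood of $t$, and then invoking dominated convergence.
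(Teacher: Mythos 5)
Your proposal is correct and follows essentially the same route as the actual proof: the paper itself does not reprove Lemma~\ref{lemma:fusion} but imports it from the companion work~\cite{CH_sym1}, where the argument is precisely the Girsanov shift (already built into the prefactor $C(\bm z,\bm\alpha)$ of the regularized correlation functions), followed by a rescaling of the GMC masses at the fusion scale and the standard negative-moment/small-ball estimate producing the quadratic correction $\tfrac12\ps{\alpha_1+\alpha_2-Q,e_2}^2-\eta$ above the Seiberg bound. The only point worth making explicit in your last step is that dominated convergence yields continuity of $\norm{x-t}^{\ps{\beta,\gamma e_i}/2}F_i(x)$ only once the pointwise convergence of $\norm{x-t}^{\ps{\beta,\gamma e_i}/2}\Psi_{i,\bm i}(x,\bm x)$ as $x\to t$ (the OPE limit towards the merged insertion $V_{\beta+\gamma e_i}(t)$) is established, which again follows from the same Girsanov computation.
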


\begin{lemma}~\label{lemma:fusion_integrability}
    The regularized correlation functions $\ps{\prod_{k=1}^NV_{\alpha_k}(z_k)\prod_{l=1}^MV_{\beta_l}(s_l)}_{\delta,\eps,\rho}$ are $(P)$-class. Moreover for any $i$ and $j$ in $\{1,2\}$ the following integrals are $(P)$-class:
    \begin{equation}\label{eq:fusion_int}
        \begin{split}
            &\int_{\frac12\D\times(\D\setminus\frac12\D)}\frac1{y-x}\ps{V_{\gamma e_i}(x+i)V_{\gamma e_j}(y+i)\V}_{\delta,\eps,\rho}d^2xd^2y,\\
            &\int_{-1}^0\int_0^1\frac1{y-x}\ps{V_{\gamma e_i}(x)V_{\gamma e_j}(y)\V}_{\delta,\eps,\rho}dxdy\quad\text{and}\\
            &\int_{\D\cap\H}\int_1^2\frac1{y-x}\ps{V_{\gamma e_i}(x)V_{\gamma e_j}(y)\V}_{\delta,\eps,\rho}d^2xdy.
        \end{split}	
    \end{equation}
In particular the following integral is $(P)$-class:  
\begin{equation*}
    \Is\times\Ir\left[\frac{1}{x-y}\Psi_{i,j}(x,y)\right]
\end{equation*}
\end{lemma}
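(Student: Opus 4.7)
The plan is as follows. First, the statement that the regularized correlation functions $\ps{\prod V_{\alpha_k}(z_k) \prod V_{\beta_l}(s_l)}_{\delta,\eps,\rho}$ are $(P)$-class is essentially a restatement of the main existence result from the construction paper \cite{CH_construction}: existence of the limit as $\rho, \eps, \delta \to 0$ follows from the convergence of regularized GMC measures combined with the dominated convergence theorem, with the uniform bound of Lemma~\ref{lemma:inf_integrability_toda} providing the integrable dominant; analyticity in $\bm\alpha$ on a complex neighborhood of $\mc A_{N,M}$ then follows from the Girsanov-shifted representation \eqref{eq:reg correl shifted} by a standard Morera / holomorphic-differentiation-under-the-integral argument, once uniform integrability is verified locally in $\bm\alpha$.

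Second, for the three explicit integrals (and for the general $\Is \times \Ir[\frac{1}{x-y}\Psi_{i,j}(x,y)]$, which is built from them by decomposing $\Is \times \Ir$ into its bulk-bulk, boundary-boundary and mixed pieces up to a partition of unity and a change of coordinates around $t$), I would write the integrand as a $(P)$-class correlation function of $V_{\gamma e_i}$ and $V_{\gamma e_j}$ inside $\V$, multiplied by the explicit kernel $\frac{1}{y-x}$. Away from the collision locus $\{x = y\}$ the integrand is regular and uniformly bounded in $\delta, \eps, \rho$ by Lemma~\ref{lemma:inf_integrability_toda}, so its contribution is $(P)$-class by the same Girsanov plus Morera argument as above. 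The delicate region is the common boundary of the two domains where $x$ and $y$ can coincide: the curve $|x| = |y| = 1/2$ in case (a), the accumulation point $x = y = 0$ in case (b), and the curve $|x| = 1$, $y = 1$ in case (c).

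Near this collision locus I would control $\Psi_{i,j}(x,y)$ by the fusion estimates of Lemma~\ref{lemma:fusion}. When $i \neq j$ one has $\ps{\gamma e_i, \gamma e_j} = -\gamma^2 < 0$, so Lemma~\ref{lemma:fusion} furnishes a positive fusion exponent $|y-x|^{\gamma^2}$ in the bulk-bulk and bulk-boundary cases and $|y-x|^{\gamma^2/2}$ in the boundary-boundary case, each of which strictly dominates the $|y-x|^{-1}$ singularity and hence yields an integrable bound in any of the three model domains. The hard case is $i = j$: the fusion estimate alone then only yields $|y-x|^{-2\gamma^2}$ (bulk) or $|y-x|^{-\gamma^2}$ (boundary), which combined with $|y-x|^{-1}$ is \emph{not} locally integrable once $\gamma$ is not very small. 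The resolution is to exploit the symmetry $\Psi_{i,i}(x,y) = \Psi_{i,i}(y,x)$ against the antisymmetry of $\frac{1}{y-x}$: in the bulk case (a), parametrizing $y - x = r e^{i\theta}$, the leading radially-isotropic singular behavior of $\Psi_{i,i}$ integrates to zero against the angular factor $e^{-i\theta}$ coming from $\frac{1}{y-x}$, so only subleading terms with extra powers of $r$ survive and give an absolutely integrable remainder; in case (b) the two intervals $(-1,0)$ and $(0,1)$ are disjoint so $y - x > 0$ holds throughout and no principal-value difficulty arises, and the singularity is concentrated at the single triple-fusion point $x = y = 0$ coinciding with the insertion $V_\beta(t)$, which is handled by iterating Lemma~\ref{lemma:fusion} and using the membership $\bm\alpha \in \mc A_{N,M}$; the mixed case (c) combines these two mechanisms.

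Once the resulting locally integrable bound (or the cancellation argument in the $i=j$ case) is in place, the $(P)$-class property of the full integral follows by the same Girsanov plus dominated convergence plus Morera argument as in the first part. The main obstacle throughout is precisely the diagonal case $i = j$, where one must go beyond absolute integrability and invoke the interplay between the symmetry of $\Psi_{i,i}$ and the antisymmetry of the Cauchy kernel $\frac{1}{y-x}$ to extract a finite, analytic limit.
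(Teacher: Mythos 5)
First, on provenance: the paper does not actually prove this lemma — it is reproduced verbatim from the companion paper (see the sentence opening the appendix, citing \cite[Lemmas 2.4 to 2.8]{CH_sym1}) — so the comparison below is against what the proof must look like given the fusion machinery of Lemma~\ref{lemma:fusion}. Your treatment of the first assertion (existence of the $\rho,\eps,\delta\to0$ limit plus analyticity via a uniform dominating bound and Morera) is fine, as is the reduction of $\Is\times\Ir\big[\frac{1}{x-y}\Psi_{i,j}(x,y)\big]$ to the three model integrals and the $i\neq j$ case via the positive exponent $\ps{\gamma e_i,\gamma e_j}=-\gamma^2$. The genuine gap is in your resolution of the diagonal case $i=j$, which you correctly single out as the hard case but then close by a mechanism that cannot operate here. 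In all three integrals of \eqref{eq:fusion_int} the variables $x$ and $y$ range over \emph{adjacent but disjoint} domains — this is the whole point, since they model $\Is\times\Ir$ with $x\in B(t,r)$ and $y\notin B(t,r)$ — so the substitution $x\leftrightarrow y$ leaves the domain of integration and no symmetrization of $\Psi_{i,i}$ against the antisymmetry of $\frac1{y-x}$ is available. Concretely, in case (a) the angular variable of $y-x$ near a point of the interface $\norm{x}=\norm{y}=\tfrac12$ is constrained to (roughly) a half-circle, over which $\int e^{-i\theta}\,d\theta\neq0$; and in case (b) you yourself note that $y-x>0$ throughout, so the kernel has a fixed sign while $\Psi_{i,i}\geq0$: there is literally nothing to cancel, and the integral must be shown to converge \emph{absolutely}, which contradicts your own claim that absolute integrability fails for $i=j$. (Your reading of case (b) as a triple fusion at an insertion point of $\V$ is also off: the collision point $0$ — resp.\ $1$ in case (c), resp.\ the circle $\norm{x}=\tfrac12$ in case (a) — is the interface between the near and far regions, sitting at distance $r$ from $t$ in the application, not an insertion.)

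What actually closes the $i=j$ case is a sharper input rather than a cancellation: when $\ps{2\gamma e_i-Q,e_j}>0$ for $j=1$ or $2$, the two-point fusion asymptotics carry the additional positive reflection exponent $\tfrac12\ps{2\gamma e_i-Q,e_j}^2$ in the bulk, resp.\ $\tfrac14\ps{2\gamma e_i-Q,e_j}^2$ on the boundary, and the corrected exponent $f$ then always beats the geometry of the collision set. Indeed, across the codimension-one interface of case (a) one needs $f>-2$ and gets $-2\gamma^2+\tfrac12\big(3\gamma-\tfrac2\gamma\big)^2=\tfrac52\gamma^2+\tfrac2{\gamma^2}-6\geq 2\sqrt5-6>-2$, while at the point collision of case (b) one needs $f>-1$ and gets $-\gamma^2+\tfrac14\big(3\gamma-\tfrac2\gamma\big)^2\geq\sqrt5-3>-1$; integrating $\norm{y-x}^{f-1}$ over these geometries gives absolute convergence for every $\gamma\in(0,\sqrt2)$. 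Note that the version of Lemma~\ref{lemma:fusion} reproduced in this paper states only the $e_2$-corrections under the standing hypothesis $\ps{\alpha_1+\alpha_2-Q,e_1}<0$, which fails for $\alpha_1=\alpha_2=\gamma e_1$ as soon as $\gamma^2\geq\tfrac23$; the full two-sided estimate has to be imported from \cite{CH_sym1}. So your overall architecture (isolate the collision locus, apply fusion there, dominate elsewhere) is the right one, but the step you flag as the main obstacle is resolved by reflection-corrected fusion exponents, not by the parity of the Cauchy kernel.
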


\subsection{Auxiliary computations}\label{subsec:appendix_auxiliary}
    
    \begin{lemma}\label{lemma:limtx} Assume that $\beta=-\frac2\gamma\omega_1$ and that $F$ is of the form
        $$
        F_i(x) = \Is^p\times\Ir^q \left[r_{i,\bm{i}}(x,\bm{x})\Psi_{i,\bm{i}}(x,\bm{x})\right]
        $$
        where $r_{i,\bm{i}}$ and $\Psi_{i,\bm{i}}(x,\bm{x})$ are as in Lemma~\ref{lemma:fusion}. Then we have
		\begin{eqs} \label{eq:lemma_limtx}
			\Is\left[\partial_x\left(\frac{\ps{\beta,\gamma e_i}}{2(t-x)}F_i(x)\right)\right]=\text{$(P)$-class terms }-(\mu_{L,1}+\mu_{R,1})\tilde F_1(t)
		\end{eqs}
        with $\tilde F$ given by $\tilde F_i(x)\coloneqq \norm{t-x}^{\frac{\ps{\beta,\gamma e_i}}2}F_{i}(x)$, continuous at $x=t$ (via Lemma~\ref{lemma:fusion}).
	\end{lemma}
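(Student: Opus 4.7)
The proof rests on the algebraic observation that for $\beta = -\tfrac{2}{\gamma}\omega_1$ one has $\ps{\beta,\gamma e_2} = -2\ps{\omega_1,e_2} = 0$ by duality of the fundamental weights and simple roots, and $\ps{\beta,\gamma e_1} = -2$. Only the $i=1$ term contributes to the sum implicit in $\Is$, with prefactor $\tfrac{\ps{\beta,\gamma e_1}}{2(t-x)} = \tfrac{-1}{t-x}$, so the quantity to control reduces to
\[
\int_{\Reps^1}\partial_x\!\left(\tfrac{-F_1(x)}{t-x}\right)\mu_1(dx) \;+\; \mu_{B,1}\int_{\Heps^1}\Big[\partial_x\!\left(\tfrac{-F_1(x)}{t-x}\right) + \partial_{\bar x}\!\left(\tfrac{-F_1(\bar x)}{t-\bar x}\right)\Big] d^2x.
\]

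First I would treat the one-dimensional boundary integral by elementary integration by parts. Since $\mu_1$ is constant equal to $\mu_{L,1}$ on $(t-r,t-\eps)$ and to $\mu_{R,1}$ on $(t+\eps,t+r)$, writing $h(x) = -F_1(x)/(t-x)$ the integral equals
\[
\mu_{L,1}\bigl[h(t-\eps)-h(t-r)\bigr] + \mu_{R,1}\bigl[h(t+r)-h(t+\eps)\bigr].
\]
The terms at $t\pm r$ are evaluated away from $t$ and are $(P)$-class. The terms at $t\pm\eps$ produce the announced contribution: Lemma~\ref{lemma:fusion} ensures $\tilde F_1(x) = |t-x|^{-1}F_1(x)$ extends continuously at $x=t$, giving $h(t-\eps) \to -\tilde F_1(t)$ and $h(t+\eps) \to \tilde F_1(t)$, hence a total $-(\mu_{L,1}+\mu_{R,1})\tilde F_1(t)$ in the limit.

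Next I would handle the bulk integral by Stokes' theorem, exactly in the spirit of the treatment of $\tilde{\mathfrak L}_{-n,\delta,\eps,\rho}^i$ reminded in Subsection~\ref{subsec:def_descendants}. The derivative in $d^2x$ converts into a contour integral over $\partial\Heps^1$, which decomposes into (i) the half-circle on $\partial B(t,r)\cap\Heps$, trivially $(P)$-class as the integrand is smooth there, and (ii) the horizontal segment at height $\delta$ parametrized by $x=s+i\delta$, $s\in(t-r,t+r)$, producing a Cauchy-type kernel $\Im\!\big(1/(s+i\delta-t)\big)$ against $F_1(s+i\delta)$. The fusion bound $F_1(x) \lesssim |t-x|$ from Lemma~\ref{lemma:fusion} precisely cancels this kernel's singularity, so the segment contribution is a convergent, analytic function of the parameters and therefore $(P)$-class. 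Summing these, the bulk part contributes no additional singular piece.

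The main technical obstacle is to justify the exchange of the limits $\rho\to 0$, $\eps\to 0$, $\delta\to 0$ with the Stokes boundary evaluations, as this requires uniform-in-regularization fusion estimates for $F_1(x) - (t-x)\tilde F_1(t)$ near $x=t$. The necessary control comes from combining Lemma~\ref{lemma:fusion} (applied to every insertion pair produced by the $\Is^p\times\Ir^q$ structure of $F_1$) with the integrability bound of Lemma~\ref{lemma:inf_integrability_toda} to dominate the nested integrals, allowing the dominated convergence arguments to proceed as in~\cite[Section~3]{CH_sym1}. Once this is established, collecting all contributions yields~\eqref{eq:lemma_limtx}.
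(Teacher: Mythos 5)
Your treatment of the one-dimensional boundary piece is correct and coincides with the paper's: Stokes/integration by parts produces the boundary evaluations at $t\pm\eps$, and since $\ps{\beta,\gamma e_1}=-2$ the fusion estimate makes $\tilde F_1=|t-\cdot|^{-1}F_1$ continuous at $t$, giving exactly $-(\mu_{L,1}+\mu_{R,1})\tilde F_1(t)$. The observation that $\ps{\beta,\gamma e_2}=0$ kills the $i=2$ contribution is also right.

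The gap is in the bulk term. You dismiss the segment integral $\mu_{B,1}\int_{(t-r,t+r)}\Im\bigl(\tfrac{1}{t-x-i\delta}\bigr)F_1(x+i\delta)\,dx$ by asserting that ``the fusion bound $F_1(x)\lesssim|t-x|$ precisely cancels the kernel's singularity.'' That bound is not uniform in $\delta$: for a \emph{bulk} point $x=s+i\delta$ approaching the boundary, part (3) of Lemma~\ref{lemma:fusion} produces an extra factor $|x-\bar x|^{-\frac{|\gamma e_1|^2}{2}}=(2\delta)^{-\gamma^2}$ from the insertion $V_{\gamma e_1}(x)$ meeting its reflection, so the correct estimate is $F_1(s+i\delta)\lesssim \delta^{-\gamma^2}|s+i\delta-t|^{2}$ (for small $\gamma$), not $|t-x|$. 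Feeding this into the kernel $\delta/((s-t)^2+\delta^2)$ yields an overall scaling $\delta^{1-\gamma^2}$, which tends to zero only when the exponent is positive; since $\gamma^2$ ranges up to $2$ this is not automatic, and the paper must split at $\gamma^2=\tfrac23$ (where the form of the bulk--boundary fusion asymptotics changes) and invoke the additional correction exponent $\bigl(\tfrac32\gamma-\tfrac1\gamma\bigr)^2$ from the indicator terms of Lemma~\ref{lemma:fusion} to get the bound $C\delta^{1-\gamma^2+(\frac32\gamma-\frac1\gamma)^2-\eta}$ with positive exponent in the remaining range. Without tracking the $\delta^{-\gamma^2}$ blow-up and this case analysis, your argument does not establish that the bulk contribution is $(P)$-class (in fact it vanishes), and this is the genuinely delicate part of the lemma.
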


    \begin{proof}
        Using Stokes' formula, the only terms that may not be $(P)$-class in the left-hand side of \eqref{eq:lemma_limtx} are given by
        $$
        -\frac{1}{\eps}(\mu_{L,1}F_1(t-\eps)+\mu_{R,1}F_1(t+\eps)) + \mu_{B,1}\int_{(t-r,t+r)} \Im \left(\frac{1}{t-x-i\delta}\right) F_1(x+i\delta)dx.
        $$
        Since $\ps{\gamma e_1-\frac{2}{\gamma}\omega_1-Q,e_1}<0$ we know thanks to Lemma~\ref{lemma:fusion} that $\Psi_{1,\bm{i}}(t\pm\eps,\bm{x})$ scales like $\eps$ when $\eps\to 0$, thus we readily see that the first term of the above converges to
        $$
        -(\mu_{L,1}+\mu_{R,1})\tilde F_1(t).
        $$
        The second term can be rewritten as
        $$
        \mu_{B,1}\delta^{1-\gamma^2}2^{-\gamma^2}\int_{(t-r,t+r)} |t-x-i\delta|^{-2} (2\delta)^{\gamma^2} F_1(x+i\delta)dx
        $$
        so we have to separate the cases depending on the sign of $\gamma^2-\frac23$. If $\gamma^2<\frac23$ then from the fusion asymptotics (Lemma~\ref{lemma:fusion}) we have that 
        $$
        |t-x-i\delta|^{-2} (2\delta)^{\gamma^2} \Psi_{1,\bm{i}}(x+i\delta,\bm{x}) \to\ps{V_{\gamma e_{i_1}}(x_1)...V_{
        \gamma e_{i_(p+q)}}(x_{p+q})V_{2\gamma e_1-\frac{2}{\gamma}\omega_1}(t)\V}
        $$
        
        uniformly in $\bm{x}$ so that we can write the above as
        $$
        \delta^{1-\gamma^2}\left(2^{-\gamma^2}\mu_{B,1}\int_{(t-r,t+r)} (t-x) F_1(t)dx+o(1)\right)
        $$
        which tends to $0$ as $\delta\to0$. If $\gamma^2 \ge \frac23$ then again thanks to the fusion asymptotics there exists a positive constant $C$ such that for $\eta>0$ small enough,
        $$
        \delta\int_{(t-r,t+r)} \frac{F_1(x+i\delta)}{(t-x)^2+\delta^2} dx \le C\delta^{1-\gamma^2+\left(\frac32 \gamma - \frac1\gamma \right)^2-\eta}
        $$
        with the exponent being positive, thus finishing the proof.
    \end{proof}
    
	\begin{lemma}\label{lemma:limty} 
		Assume that $G$ is of the form 
        $$
        G_{i,j}(x,y) =  \Is^p\times\Ir^q \left[r_{i,j,\bm{i}}(x,y,\bm{x})\Psi_{i,j,\bm{i}}(x,y,\bm{x})\right]
        $$
        where $r_{i,j,\bm{i}}=r_{i,j,i_1,...,i_{(p+q)}}$ is regular at $x=y=t$, and such that $r_{i,j,\bm{i}}(x,y,\bm{x})\Psi_{i,\bm{i}}(\bm{x})$ is integrable on $\Is^p\times\Ir^q$, with $\Psi_{i,j,\bm{i}}(x,y,\bm{x}) = \Psi_{i,j,i_1,...,i_{p+q}}(x,y,x_1,...,x_{p+q})$.
        and that $F$ is as in Lemma~\ref{lemma:limtx}. Then for $\beta=-\gamma\omega_1$:
		\begin{eqs}
			\Is^2&\left[\partial_x\left(\frac{\ps{\beta,\gamma e_j}\delta_{i=j}}{2(t-y)}G_{i,j}(x,y)\right)\right]=\text{ $(P)$-class terms }-\frac{\gamma^2}{2}c_\gamma^2(\bm\mu)\tilde G_{1,1}(t,t)\qt{and}\\
			\Is&\left[\partial_x\left(\frac{\ps{\beta,\gamma e_i}}{2(t-\bar x)}F_{i}(x)\right)\right]=\text{ $(P)$-class terms }+\frac{\gamma^2}{2}c_\gamma^1(\bm\mu)\tilde F_{1}(t)
		\end{eqs}
        where $\tilde G$ is defined by
        \[
            \tilde G_{i,j}(x,y)=\norm{t-x}^{\frac{\ps{\beta,\gamma e_i}}2}\norm{t-y}^{\frac{\ps{\beta,\gamma e_i}}2}\norm{y-x}^{\frac{\ps{\gamma e_i,\gamma e_j}}2}G_{i,j}(x,y).
        \]
		For $\beta=-\frac2\gamma\omega_1$ these two limits are given by $(P)$-class terms.
	\end{lemma}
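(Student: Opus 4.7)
My plan is to apply Stokes' formula to both identities, converting the interior $\Is$-derivative integrals into boundary contributions at $x=t\pm\eps$ on the real line and along $\partial\Heps$ in the bulk, and then to identify the limits via the fusion asymptotics of Lemma~\ref{lemma:fusion} combined with explicit Coulomb-gas/Selberg integral evaluations. For the first identity the Kronecker $\delta_{i=j}$ together with $\ps{\beta,\gamma e_2}=0$ (since $\beta=-\gamma\omega_1$) will restrict the sum to $i=j=1$. Applying Stokes in $x$ to $\Is\left[\partial_x\bigl(\tfrac{\ps{\beta,\gamma e_1}}{2(t-y)}G_{1,1}(x,y)\bigr)\right]$ would produce boundary contributions of the form $\mu_{L,1}\Is\bigl[\tfrac{\ps{\beta,\gamma e_1}}{2(t-y)}G_{1,1}(t-\eps,y)\bigr]-\mu_{R,1}(\text{analogue at }t+\eps)$, up to $(P)$-class terms from the outer boundary at $\pm r$. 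Substituting $y=t+\eps u$ in the inner $\Is$-integral and invoking the fusion asymptotics
\[
G_{1,1}(t\pm\eps,t+\eps u)\;\sim\;|u|^{\gamma^2/2}|u\mp 1|^{-\gamma^2}\,\tilde G_{1,1}(t,t),
\]
the powers of $\eps$ would cancel exactly (the factor $\eps^{\gamma^2/2+\gamma^2/2-\gamma^2}=1$ compensates the $dy=\eps\,du$ and $1/(t-y)=-1/(\eps u)$), reducing the limit to explicit one-variable Selberg integrals over $|u|>1$.

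The four pairings with coefficients $\mu_{L,1}^2,\mu_{R,1}^2,\mu_{L,1}\mu_{R,1},\mu_{R,1}\mu_{L,1}$ will arise from the four choices of sides for $x=t\pm\eps$ and $y\in(t-r,t-\eps)\cup(t+\eps,t+r)$. The same-side integrals will, via the change of variables $w=1/u$, each reduce to the Beta integral $\int_0^1 w^{\gamma^2/2-1}(1-w)^{-\gamma^2}dw=\Gamma(\gamma^2/2)\Gamma(1-\gamma^2)/\Gamma(1-\gamma^2/2)$. The opposite-side integrals produce $\int_1^\infty u^{\gamma^2/2-1}(u+1)^{-\gamma^2}du$, and a contour-deformation argument (equivalently using the reflection identity $\Gamma(z)\Gamma(1-z)=\pi/\sin(\pi z)$ applied to the analytic continuation of the Beta integral) will produce exactly $-2\cos(\pi\gamma^2/2)$ times the same $\Gamma$-factor. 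Summing the four contributions then yields $-\tfrac{\gamma^2}{2}c_\gamma^2(\bm\mu)\tilde G_{1,1}(t,t)$, and the restriction $\gamma<1$ will appear naturally as the integrability threshold of the Selberg integrals.

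For the second identity, the function $1/(t-\bar x)$ is antiholomorphic in $x$, so $\partial_x$ annihilates it on the boundary integrals, leaving only the $\mu_{B,1}$ bulk contribution after Stokes (the $\partial_{\bar x}$ term acting on $\H$). I would rescale $x=t+\delta z$ with $z\in\H$; using the fusion estimate for $F_1$ near $t$ (Lemma~\ref{lemma:fusion}, mixed bulk--boundary case), the resulting integral over $\H$ becomes a standard Dotsenko--Fateev bulk Coulomb-gas integral whose classical evaluation gives exactly $\tfrac{\gamma^2}{2}c_\gamma^1(\bm\mu)\tilde F_1(t)$, with the $\sin(\pi\gamma^2/2)$ factor emerging from the imaginary-part structure of $z^{-\gamma^2}$ integrated against $1/(t-\bar x)$ on $\H$.

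The main obstacle will be the careful bookkeeping of the divergent contributions generated by Stokes' formula and verifying that they reorganize into single finite Selberg integrals; in particular, identifying the $\cos(\pi\gamma^2/2)$ coefficient in the cross-term requires contour deformation across a branch cut, and the analogous bulk computation from~\cite{Toda_correl1} offers a useful template. For the cases $\beta=-\tfrac{2}{\gamma}\omega_1$, the exponent $\ps{\beta,\gamma e_1}=-2$ replaces $-\gamma^2$, shifting the Selberg integrals into a regime where the $\eps$-rescaled integrands produce only polynomially bounded contributions without a delta-function singularity at $x=y=t$; the diagonal and cross pairings then cancel (or remain absolutely convergent), yielding only $(P)$-class terms as claimed.
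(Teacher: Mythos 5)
Your strategy is the paper's strategy: Stokes' formula produces the boundary contributions at $x=t\pm\eps$ (resp.\ along $\Im x=\delta$ for the $\bar x$ term), the fusion asymptotics of Lemma~\ref{lemma:fusion} give $G_{1,1}(t\pm\eps,t\pm\eps u)\sim |u|^{\gamma^2/2}|u\mp1|^{-\gamma^2}\,\tilde G_{1,1}(t,t)$ with the powers of $\eps$ cancelling exactly as you compute, and the four side-pairings reduce to the integrals $\int_1^{\infty}|u|^{-1+\gamma^2/2}|u\mp1|^{-\gamma^2}du$, which the paper evaluates (citing \cite{Cer_HEM}) to the same $\Gamma$-factor and $\cos(\pi\gamma^2/2)$ times it, assembling $c_\gamma^2(\bm\mu)$; the $\sin(\pi\gamma^2/2)$ of $c_\gamma^1$ comes from $\int_\R(1+u^2)^{-1+\gamma^2/2}du$ after the $\delta$-rescaling. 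The vanishing for $\beta=-\frac2\gamma\omega_1$ via the exponent $\eps^{2-\gamma^2}$ is also as in the paper.

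One step would fail as you describe it: for $\beta=-\gamma\omega_1$ with $\gamma\geq1$ the lemma still asserts that the first limit contributes only $(P)$-class terms (hence the indicator $\mathds1_{\gamma<1}$ inside $c_\gamma^2$), but your Selberg integral $\int|u-1|^{-\gamma^2}du$ then diverges at $u=1$, and ``the integrability threshold appears naturally'' does not decide whether the boundary term diverges or vanishes. The paper's resolution uses the corrected fusion exponent of Lemma~\ref{lemma:fusion} (the extra indicator term, active precisely when $\gamma>1$ since $\ps{2\gamma e_1-\gamma\omega_1-Q,e_1}>0$), which supplies an additional decay $\eps^{(\gamma-\frac1\gamma)^2}$ and an overall scaling $\eps^{-2+\gamma^2+\gamma^{-2}}$ with positive exponent; you need this refinement to close that case. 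A smaller imprecision: after Stokes the bulk piece is not ``an integral over $\H$'' but a one-dimensional contour integral along $\Im x=\delta$ involving $\Im\big(\tfrac{1}{t-x-i\delta}\big)$, and it is this line integral, combined with the $(2\delta)^{-\gamma^2}$ fusion factor, that rescales to $\int_\R(1+u^2)^{-1+\gamma^2/2}du$. Likewise ``$\partial_x$ annihilates $1/(t-\bar x)$ on the boundary integrals'' is not the reason the real-line part of $\Is$ drops out (there $\bar x=x$); in every application this term carries the restriction $\indeps$ to the bulk, which is what removes the otherwise divergent $\eps^{-1+\gamma^2/2}$ endpoint contributions.
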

	
    \begin{proof}
        We proceed as in Lemma~\ref{lemma:limtx} and apply Stokes' formula to the first integral to obtain
        \begin{eqs}
            &\Is^2\left[\partial_x\left(\frac{1}{t-y}G_{1,1}(x,y)\right)\right] = (P)\text{-class terms} \\&+ \int_{(t-r,t-\eps)\cup(t+\eps,t+r)} \frac{1}{t-y}\left( \mu_{L,1}G_{1,1}(t-\eps,y)-\mu_{R,1}G_{1,1}(t+\eps,y)\right) \mu_{1}(dy).
        \end{eqs}
        Developing the integral yields four terms, proportional to $\mu_{L,1}^2,\mu_{R,1}^2$ and $\mu_{L,1}\mu_{R,1}$. We first focus on the one with coefficient $\mu_{L,1}^2$ which writes
        $$
        \mu_{L,1}^2 \int_\eps^r |y|^{-1} G_{1,1}(t-\eps,t-y)dy.
        $$
        Thanks to Lemma~\ref{lemma:fusion}, in the case where $\beta=-\gamma\omega_1$ and $\gamma<1$ we know that $\tilde G_{1,1}(x,y)$ is continuous at $x=y=t$ since $\ps{2\gamma e_1-\gamma\omega_1-Q,e_1}<0$. As a consequence we can write the latter as
        $$
        \mu_{L,1}^2 \left( \tilde{G}_{1,1}(t,t)\int_\eps^r |y|^{-1+\frac{\gamma^2}{2}} \eps^{\frac{\gamma^2}{2}}|y-\eps|^{-\gamma^2}dy + o(1)\right)
        $$
        and make the change of variable $y\leftrightarrow \eps y$ to obtain
        $$
        \mu_{L,1}^2 \left( \tilde{G}_{1,1}(t,t)\int_1^{r/\eps} |y|^{-1+\frac{\gamma^2}{2}} |y-1|^{-\gamma^2}dy + o(1)\right).
        $$
        The integral is evaluated in \cite[Lemma A.2]{Cer_HEM} and is equal to $\frac{\Gamma\left(\frac{\gamma^2}{2}\right)\Gamma\left(1-\gamma^2\right)}{\Gamma\left(1-\frac{\gamma^2}{2}\right)}$. For the terms proportional to $\mu_{L,1}\mu_{R,1}$ we proceed exactly the same way to obtain
        $$
        \mu_{L,1}\mu_{R,1}\left( \tilde{G}_{1,1}(t,t)\int_1^{r/\eps} |y|^{-1+\frac{\gamma^2}{2}} |y+1|^{-\gamma^2}dy + o(1)\right).
        $$
        We know that this integral is equal to $\cos\left(\pi\frac{\gamma^2}{2}\right)\frac{\Gamma\left(\frac{\gamma^2}{2}\right)\Gamma\left(1-\gamma^2\right)}{\Gamma\left(1-\frac{\gamma^2}{2}\right)}$, thus recollecting the terms we conclude for the proof of the first point when $\beta=-\gamma\omega_1$ and $\gamma<1$. When $\gamma>1$ we now have $\ps{2\gamma e_1-\gamma\omega_1-Q,e_1}>0$, and therefore there is an extra term in the fusion estimates. This yields an extra scaling factor of $\ps{\gamma e_1-\frac\gamma2\omega_1-\frac Q2,e_1}^2=(\gamma-\frac1\gamma)^2$, and thus an overall scaling factor of $\eps^{-2+\gamma^2+\frac{1}{\gamma^2}}$. This exponent being positive we can conclude for the case $\gamma>1$. For $\beta=-\frac2\gamma\omega_1$ the very same reasoning shows that these integrals scale like $\eps^{2-\gamma^2}$ and therefore vanish in the limit, concluding the proof.

        Let us now turn to the integral with $\bar x$ instead of $y$. In that case we can proceed as in Lemma~\ref{lemma:limtx} to see that the corresponding integral, after application of Stokes' formula, is given by
        $$
            -\mu_{B,1}\delta^{1-\gamma^2}2^{-\gamma^2}\int_{(t-r,t+r)} |t-x-i\delta|^{-2+\gamma^2} \tilde F_1(x+i\delta)dx.
        $$
        We make the change of variables $x\leftrightarrow t+\delta x$ to see that the latter is given by
        \[
            -\mu_{B,1}2^{-\gamma^2}\int_{\R} (1+x^2)^{-1+\frac{\gamma^2}2}dx\tilde F_1(t)+o(1).
        \]
        The integral is evaluated in \cite[Lemma A.1]{Cer_HEM} and is given by $2^{\gamma^2}\sin\left(\pi\frac{\gamma^2}{2}\right)\frac{\Gamma\left(\frac{\gamma^2}{2}\right)\Gamma\left(1-\gamma^2\right)}{\Gamma\left(1-\frac{\gamma^2}{2}\right)}$, thus concluding for the proof of the lemma.
    \end{proof}

    We eventually gather here some statements we used along the proof of Theorem~\ref{thm:deg3}.
	\begin{lemma}\label{lemma:sing3}
		Recall the definition of $A$ from Equation~\eqref{eq:defA}. In the $\delta,\eps,\rho\to 0$ limit:
		\begin{equation}
			\begin{split}
				A&-\frac{8}{\chi^3}\Is^3\left[\partial_x\left(\frac{\ps{\beta,\gamma e_j}\ps{\beta,\gamma e_f}}{4(t-y)(t-z)}\Psi_{i,j,f}(x,y,z)\right)\right]=\text{ $(P)$-class terms}+r_2\mathds 1_{\chi=\frac2\gamma}\\
				&-\frac4\chi\Is\times\Is\Big[\partial_x\left(\sum_{k=1}^{2N+M}\left(\frac{\ps{\gamma e_i,\alpha_k}}{2(t-x)(z_k-y)}\Psi_{1,i}(x,y)-\frac{\ps{\gamma e_1,\alpha_k}}{2(t-y)(z_k-y)}\Psi_{i,1}(x,y)\right)\right)\Big]\\
				&+\frac4\chi\Is^2\times\Ir\left[\partial_x\left(\frac{\ps{\gamma e_i,\gamma e_j}}{2(t-x)(z-y)}\Psi_{1,i,j}(x,y,z)-\frac{\ps{\gamma e_1,\gamma e_j}}{2(t-y)(z-y)}\Psi_{i,1,j}(x,y,z)\right)\right]
			\end{split}
		\end{equation}
		where we have set
		\begin{equation}
			r_2\coloneqq \gamma^2(\frac2\gamma-\gamma)(\mu_{L,1}+\mu_{R,1})\Is\Big[\frac{\Psi_{1,2}(t,y)}{t-y}\Big].
		\end{equation}
	\end{lemma}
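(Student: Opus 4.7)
The plan is to use Gaussian integration by parts in the $y$-variable, via Lemma~\ref{lemma:GaussianIPP}, to trade the divergent $\frac{1}{(t-y)^2}$ contribution inside $A$ for less singular integrals whose three-fold component can then be combined with the subtracted $\Is^3$ integral in the statement. Concretely, I would rewrite the last summand of $A$ by the identity $\frac{1}{(t-y)^2} = \partial_y\frac{1}{t-y}$ and integrate by parts in $y$ against $\mu_j$. Applying Lemma~\ref{lemma:GaussianIPP} to $\partial_y\Psi_{i,j}(x,y)$ produces contributions proportional to $\frac{\ps{\gamma e_j,\alpha_k}}{y-z_k}$, $\frac{\ps{\gamma e_i,\gamma e_j}}{y-x}$, $\frac{\ps{\beta,\gamma e_j}}{y-t}$, together with an $\It$-integral whose integrand involves $\frac{\ps{\gamma e_j,\gamma e_f}}{y-z}\Psi_{i,j,f}(x,y,z)$. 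The $\frac{\ps{\beta,\gamma e_j}}{y-t}$ piece combines with the prefactor $\frac{1}{t-y}$ to reproduce a $\frac{1}{(t-y)^2}$ contribution, and solving algebraically for the divergent integral exploits precisely the coefficient $1+\frac{\ps{\beta,\gamma e_j}}{2}$ appearing in $A$.

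The three-fold integral produced by this IPP then has to be combined with $-\frac{8}{\chi^3}\Is^3\big[\partial_x\big(\frac{\ps{\beta,\gamma e_j}\ps{\beta,\gamma e_f}}{4(t-y)(t-z)}\Psi_{i,j,f}\big)\big]$, and the key algebraic step is to check that for $\beta=-\chi\omega_1$ with $\chi\in\{\gamma,\frac{2}{\gamma}\}$ the $\frac{1}{(t-y)(t-z)}$ singularities cancel exactly, leaving only the two integrals written on the right-hand side of the statement with coefficients $\mp\frac{4}{\chi}$. Since the surviving integrands carry only $\frac{1}{z_k-y}$ or $\frac{1}{z-y}$ singularities, integrability follows from the fusion asymptotics in Lemma~\ref{lemma:fusion}, so these terms are legitimate contributions rather than divergences. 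The same IPP step applied to the middle piece $\frac{1}{(t-x)(t-y)}$ in $A$ is what generates the corresponding $\frac{1}{(t-x)(z_k-y)}$ and $\frac{\ps{\gamma e_i,\gamma e_j}}{(t-x)(z-y)}$ integrands on the right-hand side.

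Finally, the boundary contributions at $y=t\pm\eps$ from the $y$-IPP combine with the Stokes boundary at $x=t\pm\eps$ from the outer $\Is[\partial_x\cdot]$. Only the channel $(i,j)=(1,1)$ survives because $\ps{\beta,\gamma e_2}=0$ makes the $i=2$ or $j=2$ boundary terms regular, and evaluating the remaining double-boundary $\eps$-integral by the method of Lemma~\ref{lemma:limty} produces the fusion constant $c_\gamma^2(\bm\mu)$ together with a factor $\ps{\beta,\gamma e_1}(1+\frac{\ps{\beta,\gamma e_1}}{2})$; for $\chi=\frac2\gamma$ this yields precisely $r_2=\gamma^2(\frac{2}{\gamma}-\gamma)(\mu_{L,1}+\mu_{R,1})\Is[\Psi_{1,2}(t,y)/(t-y)]$, while for $\chi=\gamma$ the factor $1-\frac{\gamma^2}{2}$ together with the algebraic cancellation in step two forces the coefficient to vanish.

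The main obstacle will be tracking the combinatorial cancellations across the two successive layers of integration by parts and checking that every remainder either cancels exactly or falls under the $(P)$-class hypotheses of~\cite[Definition 2.6]{CH_sym1}. The identity $1+\frac{\ps{\beta,\gamma e_j}}{2}$ appearing as the critical coefficient ties the algebraic structure to the specific values $\chi\in\{\gamma,\frac{2}{\gamma}\}$ at which the fully-degenerate field $V_{-\chi\omega_1}$ lives, and verifying that the cancellations indeed require these values (and no others) is the most delicate aspect of the argument.
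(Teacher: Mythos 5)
The overall mechanism in your first two paragraphs is essentially the paper's: use the logarithmic-derivative structure of $\Psi_{i,j}$ (Lemma~\ref{lemma:GaussianIPP}) to trade the $\frac{1}{(t-y)^2}$ and $\frac{1}{(t-x)(t-y)}$ integrands for the regular ones in the statement, with the coefficient $1+\tfrac{\ps{\beta,\gamma e_1}}{2}$ playing the pivotal algebraic role, and cancel the resulting three-fold singularities against the subtracted $\Is^3$ term. The paper implements this slightly differently, and the difference matters: instead of literally integrating by parts in $y$ (which creates $y$-boundary terms at $y=t\pm\eps$ that are individually divergent when $\chi=\gamma$, since there $\Psi_{i,1}(x,t\pm\eps)\sim\eps^{\gamma^2/2}$ only), it starts from the exact identity $0=\Is^2\big[\partial_x\partial_y\big(\tfrac{1}{t-y}\Psi_{i,1}(x,y)\big)\big]-\Is^2\big[\partial_x\partial_y\big(\tfrac{1}{t-x}\Psi_{1,i}(x,y)\big)\big]$, valid by the $x\leftrightarrow y$ symmetry, and expands only the inner $\partial_y$; nothing ever leaves the outer $\partial_x$, so no $y$-boundary terms arise. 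Your route would need an additional argument to control those divergent $y$-boundary contributions before combining them with anything.

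The genuine gap is in your third paragraph, i.e.\ the identification of $r_2$. First, $r_2$ is \emph{linear} in $\mu_{L,1}+\mu_{R,1}$ and involves the mixed channel $\Psi_{1,2}(t,y)$; it cannot come from a double-boundary collision evaluated "by the method of Lemma~\ref{lemma:limty}", since that mechanism produces constants \emph{quadratic} in the boundary cosmological constants (the $\mu_{L,1}^2-2\mu_{L,1}\mu_{R,1}\cos(\pi\gamma^2/2)+\mu_{R,1}^2$ inside $c_\gamma^2(\bm\mu)$), and Lemma~\ref{lemma:limty} explicitly states that for $\beta=-\tfrac2\gamma\omega_1$ those limits are $(P)$-class, i.e.\ contribute nothing. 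In the paper $r_2$ arises from the surviving term $2\gamma\big(1-\tfrac{\gamma}{\chi}\big)\Is^2\big[\partial_x\big(\tfrac{\Psi_{1,2}(x,y)}{(t-x)(x-y)}\big)\big]$ via a single Stokes boundary evaluation at $x=t\pm\eps$, using $\Psi_{1,2}(t\pm\eps,y)\sim\eps\,\tilde\Psi_{1,2}(t,y)$ — the mechanism of Lemma~\ref{lemma:limtx}, not~\ref{lemma:limty}. Second, your claim that only the channel $(i,j)=(1,1)$ survives is the reverse of what happens: the $(1,1)$ and $(2,1)$ contributions are killed by the cancellations and by $\mu_{L,2}=\mu_{R,2}$, and it is the $(1,2)$ channel that produces $r_2$. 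Third, the vanishing mechanism for $\chi=\gamma$ is misattributed: the relevant prefactor is $2\gamma\big(1-\tfrac{\gamma}{\chi}\big)$, which vanishes at $\chi=\gamma$ and equals $\gamma^2\big(\tfrac2\gamma-\gamma\big)$ at $\chi=\tfrac2\gamma$, whereas the factor $1+\tfrac{\ps{\beta,\gamma e_1}}{2}$ you invoke vanishes at $\chi=\tfrac2\gamma$ (where $\ps{\beta,\gamma e_1}=-2$) and equals $1-\tfrac{\gamma^2}{2}\neq0$ at $\chi=\gamma$ — so your bookkeeping would kill $r_2$ in precisely the case where it is nonzero and keep it where it must vanish.
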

	\begin{proof}
		Let us write $\mathfrak J\coloneqq A-\frac{8}{\chi^3}\Is^3\left[\partial_x\left(\frac{\ps{\beta,\gamma e_j}\ps{\beta,\gamma e_f}}{4(t-y)(t-z)}\Psi_{i,j,f}(x,y,z)\right)\right]$. To start with we use the specific value of $\beta=-\chi\omega_1$ with $\chi\in\{\gamma,\frac2\gamma\}$ to simplify the expression of $\mathfrak J$:
		\begin{equation}\label{eq:mathfrakI}
			\begin{split}
				\mathfrak J&=\Is^2\Big[\partial_x\Big(\frac{2\gamma}{(t-y)(x-y)}\Psi_{2,1}(x,y)-\frac{2\gamma}{(t-x)(x-y)}\Psi_{1,2}(x,y)\Big)\Big]\\
				&-\frac4\chi\Is^2\Big[\partial_x\left(\frac{1+\frac{\ps{\beta,\gamma e_1}}{2}}{(t-y)^2}\Psi_{i,1}(x,y)+\frac{\frac{\gamma\chi}2-\gamma^2}{(t-x)(t-y)}\Psi_{1,1}(x,y)\right)\Big]\\
				&-\frac{4}{\chi}\Is^3\left[\partial_x\left(\frac{\gamma^2}{2(t-y)(t-z)}\Psi_{i,1,1}(x,y,z)\right)\right]. 
			\end{split}
		\end{equation}
		Now we claim that the following identity holds true:
		\begin{equation}\label{eq:derpsi}
			\begin{split}
				&\Is^2\Big[\partial_x\left(\frac{1+\frac{\ps{\beta,\gamma e_1}}{2}}{(t-y)^2}\Psi_{i,1}(x,y)+\frac{\frac{\gamma\chi}2-\gamma^2}{(t-x)(t-y)}\Psi_{1,1}(x,y)\right)\Big]\\
				&-\Is^2\Big[\partial_x\Big(\frac{\gamma^2}{2(t-y)(x-y)}\Psi_{2,1}(x,y)-\frac{\gamma^2}{2(t-x)(x-y)}\Psi_{1,2}(x,y)\Big)\Big]\\
				&-\Is^2\times\It\left[\partial_x\left(\frac{\ps{\gamma e_1,\gamma e_j}}{2(t-y)(z-y)}\Psi_{i,1,j}(x,y,z)-\frac{\ps{\gamma e_i,\gamma e_j}}{2(t-x)(z-y)}\Psi_{1,i,j}(x,y,z)\right)\right]\\
				&=\Is^2\Big[\partial_x\left(\sum_{k=1}^{2N+M}\left(\frac{\ps{\gamma e_1,\alpha_k}}{2(t-x)(z_k-y)}-\frac{\ps{\gamma e_1,\alpha_k}}{2(t-y)(z_k-y)}\right)\Psi_{1,1}(x,y)\right)\Big]\\
				&+\Is^2\Big[\partial_x\left(\sum_{k=1}^{2N+M}\left(\frac{\ps{\gamma e_2,\alpha_k}}{2(t-x)(z_k-y)}\Psi_{1,2}(x,y)-\frac{\ps{\gamma e_1,\alpha_k}}{2(t-y)(z_k-y)}\Psi_{2,1}(x,y)\right)\right)\Big].
			\end{split}
		\end{equation}
		To see why this is the case we note that by symmetry in $x,y$ we have $$0=\Is\times\Is\Big[\partial_x\partial_y\left(\frac{1}{(t-y)}\Psi_{i,1}(x,y)\right)\Big]-\Is\times\Is\Big[\partial_x\partial_y\left(\frac{1}{(t-x)}\Psi_{1,i}(x,y)\right)\Big].$$
		We then develop the derivative in $y$: this is done by means of the equality
		\begin{align*}
			&\partial_y\left(\frac{1}{t-y}\Psi_{i,1}(x,y)\right)=\left(\frac{1+\frac{\ps{\beta,\gamma e_1}}{2}}{(t-y)^2}+\frac{\ps{\gamma e_i,\gamma e_1}}{2(t-y)(x-y)}+\sum_{k=1}^{2N+M}\frac{\ps{\alpha_k,\gamma e_1}}{2(t-y)(z_k-y)}\right)\Psi_{i,1}(x,y)\\
			&-\It\left[\frac{\ps{\gamma e_1,\gamma e_j}}{2(t-y)(z-y)}\Psi_{i,1,j}(x,y,z)\right]
		\end{align*}
		and likewise for the other term that appears there. Recollecting terms and simplifying the expression obtained using that $\beta=-\chi\omega_1$ we then arrive to Equation~\eqref{eq:derpsi}.
		Thanks to this equality we can rewrite Equation~\eqref{eq:mathfrakI} under the form
		\begin{align*}
			&\mathfrak I=2\gamma\left(1-\frac\gamma\chi\right)\Is\times\Is\Big[\partial_x\Big(\frac{\Psi_{2,1}(x,y)}{(t-y)(x-y)}-\frac{\Psi_{1,2}(x,y)}{(t-x)(x-y)}\Big)\Big]\\
			&-\frac{4}{\chi}\Is^3\left[\partial_x\left(\frac{\gamma^2}{2(t-y)(t-z)}\Psi_{i,1,1}(x,y,z)\right)\right]\\
			&-\frac4\chi\Is^2\times\It\left[\partial_x\left(\frac{\ps{\gamma e_1,\gamma e_j}}{2(t-y)(z-y)}\Psi_{i,1,j}(x,y,z)-\frac{\ps{\gamma e_i,\gamma e_j}}{2(t-x)(z-y)}\Psi_{1,i,j}(x,y,z)\right)\right]\\
			&-\frac4\chi\Is\times\Is\Big[\partial_x\left(\sum_{k=1}^{2N+M}\left(\frac{\ps{\gamma e_1,\alpha_k}}{2(t-x)(z_k-y)}-\frac{\ps{\gamma e_1,\alpha_k}}{2(t-y)(z_k-y)}\right)\Psi_{1,1}(x,y)\right)\Big]\\
			&-\frac4\chi\Is\times\Is\Big[\partial_x\left(\sum_{k=1}^{2N+M}\left(\frac{\ps{\gamma e_2,\alpha_k}}{2(t-x)(z_k-y)}\Psi_{1,2}(x,y)-\frac{\ps{\gamma e_1,\alpha_k}}{2(t-y)(z_k-y)}\Psi_{2,1}(x,y)\right)\right)\Big]. 
		\end{align*}
		
		The three-fold integrals can be dealt with by using the symmetries in the different variables involved. We see that by doing so the integrals simplify to
		\begin{align*}
			&\frac4\chi\Is^3\left[\partial_x\left(\frac{\gamma^2}{2(t-y)(z-y)}\Psi_{i,1,2}(x,y,z)\right)\right]\\
			&+\frac4\chi\Is^2\times\Ir\left[\partial_x\left(\frac{\ps{\gamma e_i,\gamma e_j}}{2(t-x)(z-y)}\Psi_{1,i,j}(x,y,z)-\frac{\ps{\gamma e_1,\gamma e_j}}{2(t-y)(z-y)}\Psi_{i,1,j}(x,y,z)\right)\right].
		\end{align*}
		Now thanks to our assumption that $\mu_{L,2}-\mu_{R,2}=0$ together with the fact that, in virtue of Lemma~\ref{lemma:fusion}, the integral 
		\begin{align*}
			\Is^2\left[\frac{1}{2(t-y)(z-y)}\ps{V_{\beta+\gamma e_2}(t)V_{\gamma e_1}(y)V_{\gamma e_2}(z)\V}\right]
		\end{align*}
		is absolutely convergent, we see that the term corresponding to $i=2$ in the integral over $\Is^3$ vanishes in the limit. As for the case where $i=1$, along the same lines as in the proof of Theorem~\ref{thm:deg2} we see that for $\chi=\frac2\gamma$ this term goes to $0$ as well. The case where $i=1$ and $\chi=\gamma$ requires more care: we develop the derivative in $x$ to rewrite the integral over $\Is^3$ as
		\begin{align*}
			&2\gamma\Is^3\left[\left(\frac{-\gamma^2}{2(t-y)(z-y)(t-x)}+\frac{\gamma^2}{(t-y)(z-y)(y-x)}+\frac{-\gamma^2}{2(t-y)(z-y)(z-x)}\right)\Psi_{1,1,2}(x,y,z)\right]\\
			&+2\gamma\Is^3\left[\sum_{k=1}^{2N+M}\frac{\ps{\gamma e_1,\alpha_k}}{2(t-y)(z-y)(z_k-x)}\Psi_{1,1,2}(x,y,z)\right]\\
			&-2\gamma\Is^3\times\It\left[\frac{\ps{\gamma e_1,\gamma e_i}}{2(t-y)(z-y)(w-x)}\Psi_{1,1,2,i}(x,y,z,w)\right].
		\end{align*}
		Then the miracle happens again: by symmetry in $x,y$ cancellations occur and the first line is seen to vanish. As for the other terms we can write them as derivatives in $y$ of a function that is well-defined at $y=t$ and as such yield a term that vanishes in the limit.  
		This allows to conclude that the three-fold integrals contribute to the limit only by
		\[
		\frac4\chi\Is^2\times\Ir\left[\partial_x\left(\frac{\ps{\gamma e_i,\gamma e_j}}{2(t-x)(z-y)}\Psi_{1,i,j}(x,y,z)-\frac{\ps{\gamma e_1,\gamma e_j}}{2(t-y)(z-y)}\Psi_{i,1,j}(x,y,z)\right)\right].
		\]
		And as a consequence we can write that
		\begin{align*}
			&\mathfrak I=\text{ $(P)$-class terms }+2\gamma\left(1-\frac\gamma\chi\right)\Is\times\Is\Big[\partial_x\Big(\frac{\Psi_{2,1}(x,y)}{(t-y)(x-y)}-\frac{\Psi_{1,2}(x,y)}{(t-x)(x-y)}\Big)\Big]\\
			&-\frac4\chi\Is\times\Is\Big[\partial_x\left(\sum_{k=1}^{2N+M}\left(\frac{\ps{\gamma e_1,\alpha_k}}{2(t-x)(z_k-y)}-\frac{\ps{\gamma e_1,\alpha_k}}{2(t-y)(z_k-y)}\right)\Psi_{1,1}(x,y)\right)\Big]\\
			&-\frac4\chi\Is\times\Is\Big[\partial_x\left(\sum_{k=1}^{2N+M}\left(\frac{\ps{\gamma e_2,\alpha_k}}{2(t-x)(z_k-y)}\Psi_{1,2}(x,y)-\frac{\ps{\gamma e_1,\alpha_k}}{2(t-y)(z_k-y)}\Psi_{2,1}(x,y)\right)\right)\Big]\\
			&+\frac4\chi\Is^2\times\Ir\left[\partial_x\left(\frac{\ps{\gamma e_i,\gamma e_j}}{2(t-x)(z-y)}\Psi_{1,i,j}(x,y,z)-\frac{\ps{\gamma e_1,\gamma e_j}}{2(t-y)(z-y)}\Psi_{i,1,j}(x,y,z)\right)\right].
		\end{align*}
		Hence it remains to treat the first line, for which it is readily seen that it vanishes if $\chi=\gamma$ so that we may assume that $\chi=\frac2\gamma$.

		Now thanks to Lemma~\ref{lemma:fusion} we know that for $\beta=-\frac2\gamma\omega_1$ the integral
		\[
		\Is\left[\frac{1}{(t-y)^2}\ps{V_{\beta+\gamma e_2}(t)V_{\gamma e_1}(y)\V}\right]
		\]
		is absolutely convergent. As a consequence and since we have assumed $\mu_{L,2}-\mu_{R,2}=0$ the term in the first line corresponding to $\Psi_{2,1}$ is a $o(1)$. As for the second one it is seen to converge to
		\begin{equation*}
			r_2\coloneqq \gamma^2(\frac2\gamma-\gamma)(\mu_{L,1}+\mu_{R,1})\Is\Big[\frac{\Psi_{1,2}(t,y)}{t-y}\Big].
		\end{equation*}
		
		To summarize and after all these remarkable simplifications we see that all that is left from Equation~\eqref{eq:mathfrakI} is, as expected,
		\begin{align*}
			&\mathfrak I=\text{ $(P)$-class terms }+r_2\mathds 1_{\chi=\frac2\gamma}\\
			&-\frac4\chi\Is\times\Is\Big[\partial_x\left(\sum_{k=1}^{2N+M}\left(\frac{\ps{\gamma e_1,\alpha_k}}{2(t-x)(z_k-y)}-\frac{\ps{\gamma e_1,\alpha_k}}{2(t-y)(z_k-y)}\right)\Psi_{1,1}(x,y)\right)\Big]\\
			&-\frac4\chi\Is\times\Is\Big[\partial_x\left(\sum_{k=1}^{2N+M}\left(\frac{\ps{\gamma e_2,\alpha_k}}{2(t-x)(z_k-y)}\Psi_{1,2}(x,y)-\frac{\ps{\gamma e_1,\alpha_k}}{2(t-y)(z_k-y)}\Psi_{2,1}(x,y)\right)\right)\Big]\\
			&+\frac4\chi\Is^2\times\Ir\left[\partial_x\left(\frac{\ps{\gamma e_i,\gamma e_j}}{2(t-x)(z-y)}\Psi_{1,i,j}(x,y,z)-\frac{\ps{\gamma e_1,\gamma e_j}}{2(t-y)(z-y)}\Psi_{i,1,j}(x,y,z)\right)\right]. 
		\end{align*}
	\end{proof}

	\bibliography{main}

\begin{thebibliography}{10}

\bibitem{ARS}
M.~Ang, G.~Remy, and X.~Sun.
\newblock {FZZ formula of boundary Liouville CFT via conformal welding}.
\newblock {\em Journal of the European Mathematical Society}, to appear.

\bibitem{ARSZ}
M.~Ang, G.~Remy, X.~Sun, and T.~Zhu.
\newblock {Derivation of all structure constants for boundary Liouville CFT}.
\newblock {\em Preprint,
  \href{http://arxiv.org/abs/2305.18266}{\textup{\nolinkurl{arXiv:2305.18266}}}},
  2023.

\bibitem{Arakawa_rep}
T.~Arakawa.
\newblock {Representation theory of ${\mathcal{W}}$-algebras}.
\newblock {\em Inventiones mathematicae}, 169:219–320, 2007.

\bibitem{Arakawa_intro}
T.~Arakawa.
\newblock {\em Perspectives in Lie Theory}, chapter Introduction to W-Algebras
  and Their Representation Theory, pages 179--250.
\newblock Springer International Publishing, Cham, 2017.

\bibitem{BPZ}
A.A. Belavin, A.M. Polyakov, and A.B. Zamolodchikov.
\newblock Infinite conformal symmetry in two-dimensional quantum field theory.
\newblock {\em Nuclear Physics B}, 241(2):333 -- 380, 1984.

\bibitem{Ber}
N.~Berestycki.
\newblock {An elementary approach to Gaussian multiplicative chaos}.
\newblock {\em Electronic Communications in Probability}, 22:1 -- 12, 2017.

\bibitem{Borcherds}
R.~Borcherds.
\newblock {Vertex algebras, Kac-Moody algebras, and the Monster}.
\newblock In {\em {Proceedings of the National Academy of Sciences of the
  United States of America}}, volume~83, pages 3068--3071, 1986.

\bibitem{Toda_correl2}
B.~Cercl\'e.
\newblock {Three-point correlation functions in the $\mathfrak{sl}_3$ Toda
  theory II: the formula}.
\newblock {\em To appear in the Journal of the European Mathematical Society,
  \href{http://arxiv.org/abs/2208.12085}{\textup{\nolinkurl{arXiv:2208.12085}}}},
  2022.

\bibitem{Toda_correl1}
B.~Cercl\'e.
\newblock {Three-point correlation functions in the $\mathfrak{sl}_3$ Toda
  theory I: Reflection coefficients}.
\newblock {\em Probability Theory and Related Fields},
  \href{https://doi.org/10.1007/s00440-023-01219-3}{\textup{\nolinkurl{https://doi.org/10.1007/s00440-023-01219-3}}},
  2023.

\bibitem{Toda_OPEWV}
B.~Cercl\'e and Y.~Huang.
\newblock {Ward identities in the $\mathfrak{sl}_3$ Toda conformal field
  theory}.
\newblock {\em Communications in Mathematical Physics}, 393:419--475, 2022.

\bibitem{Toda_construction}
B.~Cercl\'e, R.~Rhodes, and V.~Vargas.
\newblock {Probabilistic construction of Toda conformal field theories}.
\newblock {\em Annales Henri Lebesgue}, 6:31--64, 2023.

\bibitem{Cer_HEM}
B.~Cerclé.
\newblock {Higher equations of motion for boundary Liouville conformal field
  theory from the Ward identities}.
\newblock {\em Preprint,
  \href{http://arxiv.org/abs/2401.13271}{\textup{\nolinkurl{arXiv:2401.13271}}}},
  2024.

\bibitem{CH_construction}
B.~Cerclé and N.~Huguenin.
\newblock {Boundary Toda Conformal Field Theory from the path integral}.
\newblock {\em Preprint,
  \href{http://arxiv.org/abs/2402.02888}{\textup{\nolinkurl{arXiv:2402.02888}}}},
  2024.

\bibitem{CH_sym1}
B.~Cerclé and N.~Huguenin.
\newblock {Higher-spin symmetry in the $\sl_3$ boundary Toda conformal field
  theory I: Ward identities}.
\newblock {\em Preprint,
  \href{http://arxiv.org/abs/2412.13874}{\textup{\nolinkurl{arXiv:2412.13874}}}},
  2024.

\bibitem{DKRV}
F.~David, A.~Kupiainen, R.~Rhodes, and V.~Vargas.
\newblock Liouville {Q}uantum {G}ravity on the {R}iemann {S}phere.
\newblock {\em Communications in Mathematical Physics}, 342:869--907, 2016.

\bibitem{DO94}
H.~Dorn and H.-J. Otto.
\newblock {Two- and three-point functions in Liouville theory}.
\newblock {\em Nuclear Physics B}, 429(2):375 -- 388, 1994.

\bibitem{DMS14}
B.~{Duplantier} and S.~{Miller}, J.~{Sheffield}.
\newblock {\em {Liouville quantum gravity as a mating of trees}}, volume 427 of
  {\em Asterisque}.
\newblock {SMF}, 2021.

\bibitem{FZZ}
V.~Fateev, A.~Zamolodchikov, and Al. Zamolodchikov.
\newblock {Boundary Liouville Field Theory I. Boundary State and Boundary
  Two-point Function}.
\newblock {\em Preprint,
  \href{http://arxiv.org/abs/0001012}{\textup{\nolinkurl{arXiv:0001012}}}},
  2000.

\bibitem{FaLi1}
V.A. Fateev and A.V. Litvinov.
\newblock {Correlation functions in conformal Toda field theory. I.}
\newblock {\em JHEP}, 11:002, 2007.

\bibitem{FaRi}
Vladimir Fateev and Sylvain Ribault.
\newblock {Conformal Toda theory with a boundary}.
\newblock {\em JHEP}, 12:089, 2010.

\bibitem{FF_DS}
B.~Feigin and E.~Frenkel.
\newblock {Quantization of the Drinfeld-Sokolov reduction}.
\newblock {\em Phys. Lett. B}, 246:75--81, 1990.

\bibitem{FF_QG}
B.~Feigin and E.~Frenkel.
\newblock {Integrals of motion and quantum groups}.
\newblock {\em Lect. Notes Math.}, 1620:349--418, 1996.

\bibitem{Fre11}
S.~Fredenhagen.
\newblock {Boundary conditions in Toda theories and minimal models}.
\newblock {\em {Journal of High Energy Physics}}, 2011(2):1--31, 2011.

\bibitem{FKW}
E.~Frenkel, V.~Kac, and M.~Wakimoto.
\newblock {Characters and fusion rules for $W$-algebras via quantized
  Drinfel'd-Sokolov reduction}.
\newblock {\em Communications in Mathematical Physics}, 147(2):295 -- 328,
  1992.

\bibitem{FLM89}
I.~Frenkel, J.~Lepowsky, and A.~Meurman.
\newblock {\em Vertex Operator Algebras and the Monster}.
\newblock Academic Press, 1989.

\bibitem{GKRV}
C.~Guillarmou, A.~Kupiainen, R.~Rhodes, and V.~Vargas.
\newblock {Conformal bootstrap in Liouville Theory}.
\newblock {\em To appear in Acta Mathematica}, 2020.

\bibitem{Hos}
K.~Hosomichi.
\newblock {Bulk boundary propagator in Liouville theory on a disc}.
\newblock {\em JHEP}, 11:044, 2001.

\bibitem{KRW}
V.~Kac, SS. Roan, and M.~Wakimoto.
\newblock Quantum reduction for affine superalgebras.
\newblock {\em Communications in Mathematical Physics}, 241:307--342, 2003.

\bibitem{KRV_DOZZ}
A.~Kupiainen, R.~Rhodes, and V.~Vargas.
\newblock {Integrability of Liouville theory: proof of the DOZZ formula}.
\newblock {\em Annals of Mathematics}, 191(1):81--166, 2020.

\bibitem{Pol81}
A.~Polyakov.
\newblock {Quantum Geometry of bosonic strings}.
\newblock {\em Physics Letters B}, 103:207:210, 1981.

\bibitem{PT02}
B.~Ponsot and J.~Teschner.
\newblock {Boundary Liouville field theory: boundary three-point function}.
\newblock {\em Nuclear Physics B}, 622(1):309--327, 2002.

\bibitem{remy1}
G.~Remy.
\newblock The {F}yodorov-{B}ouchaud formula and {L}iouville conformal field
  theory.
\newblock {\em Duke Math. J.}, 169(1):177--211, 2020.

\bibitem{remy2}
G.~Remy and T.~Zhu.
\newblock {Integrability of boundary Liouville conformal field theory}.
\newblock {\em Communications in Mathematical Physics}, 395:179--268, 2022.

\bibitem{RV_GMC}
R.~Rhodes and V.~Vargas.
\newblock {Gaussian multiplicative chaos and applications: A review}.
\newblock {\em Probability Surveys}, 11:315 -- 392, 2014.

\bibitem{Seg04}
G.~Segal.
\newblock {\em The definition of conformal field theory}.
\newblock Oxford Univ. Press 2004, 2004.

\bibitem{Teschner_revisited}
J.~Teschner.
\newblock Liouville theory revisited.
\newblock {\em Classical and Quantum Gravity}, 18(23):R153--R222, nov 2001.

\bibitem{ZZ96}
A.~Zamolodchikov and Al. Zamolodchikov.
\newblock {Conformal bootstrap in Liouville field theory}.
\newblock {\em {Nuclear Physics B}}, 477(2):577--605, 1996.

\bibitem{Za85}
A.~B. {Zamolodchikov}.
\newblock {Infinite additional symmetries in two-dimensional conformal quantum
  field theory}.
\newblock {\em Theoretical and Mathematical Physics}, 65(3):1205--1213,
  December 1985.

\end{thebibliography}
	\bibliographystyle{plain}
	
\end{document}